%
%	paper: Optimal reinforcing networks for elastic membranes
%	authors: GA & GB & SGLB & EO
%	published on: Netw. Heterog. Media
%
%%%%%%%%%%%%%%%%%%%%%%%%%%%%%%%%%%%%%%%%%%%%%%%%%%%%%%%%%%%%%%%%%

\newcommand{\dataversione}{May 15, 2019}
\hyphenation{semi-con-tin-u-ous}

%
%	documentclass and packages
%
%\documentclass[11pt,reqno,a4paper,draft]{amsart}
\documentclass[11pt,reqno,a4paper,final]{amsart}
\usepackage{fancyhdr}
\usepackage{amssymb}
\usepackage{mathrsfs}
\usepackage{paralist}
\usepackage{algorithm}
\usepackage{graphicx}%\graphicspath{{figures/}}
\usepackage[T1]{fontenc}
\usepackage[colorlinks=true]{hyperref}\hypersetup{urlcolor=blue, citecolor=red}
\usepackage{enumitem}\setlist[itemize]{leftmargin=28 pt}

\textheight=9.02 in
\textwidth=5.5 in
\hoffset = -.35 cm
\voffset = -1 cm
\tolerance 1000

%
%	headlines (fancyhdr package needed)
%
\setlength{\headheight}{22 pt}
\pagestyle{fancy}
\fancyhf{}

\fancyhead[CE]{\scshape\footnotesize G. Alberti, G. Buttazzo, S. Guarino Lo Bianco and \'E. Oudet}
\fancyhead[CO]{\scshape\footnotesize Optimal reinforcing networks}
\fancyhead[LE,RO]{\scshape\footnotesize\thepage}

%
%	additional format intructions 
%
% We redifine \section and \subsection to modify font, size and spacing
\makeatletter
\def\@secnumfont{\bfseries}
\def\section{\@startsection%
{section}%	name: section/subsection/etc.
{1}%			level: 1 for section/2 for subsection/etc.
\z@{1.5\linespacing\@plus.2\linespacing}% 	vertical skip before
  {.5\linespacing}%	vertical skip after
  {\large\normalfont\bfseries}}%	style
% {\Large\normalfont\bfseries\centering}}% 	style
\def\subsection{\@startsection%
{subsection}%	name: section/subsection/etc.
{2}%			level: 1 for section/2 for subsection/etc.
\z@{.5\linespacing\@plus.2\linespacing}% 	vertical skip before
  {-.5em}%	vertical skip after
  {\normalfont\bfseries}}%	style
\makeatother

%
%	statements format
%
\numberwithin{equation}{section}

\newtheorem{theorem}[subsection]{Theorem}
\newtheorem{lemma}[subsection]{Lemma}
\newtheorem{proposition}[subsection]{Proposition}

\theoremstyle{definition}
\newtheorem{remark}[subsection]{Remark}
\newtheorem{example}[subsection]{Example}
\newtheorem{problem}[subsection]{Problem}
%\newtheorem{definition}[subsection]{Definition}

%
%	additional commands
%
\DeclareMathOperator{\dive}{div}
\DeclareMathOperator{\dist}{dist}
\DeclareMathOperator{\trace}{\mbox{\Large$\!\llcorner$}}

\newcommand{\eps}{\varepsilon}
\newcommand{\bd}{\partial}
\newcommand{\Lip}{\mathrm{Lip}}
\newcommand{\areas}{\mathrm{areas}}
\newcommand{\weights}{\mathrm{weights}}
\newcommand{\lengths}{\mathrm{lengths}}
\newcommand{\R}{\mathbb{R}}
\newcommand{\HH}{\mathscr{H}}
\newcommand{\M}{\mathscr{M}}
\newcommand{\ML}{\M_L}
\newcommand{\MLa}{\M_L^a}
\newcommand{\A}{\mathscr{A}}
\newcommand{\T}{\mathcal{T}}
\newcommand{\AL}{\A_L}
\newcommand{\EE}{\mathcal{E}}

\newcommand{\Omegabar}{{\smash{\overline\Omega}}}
\newcommand{\Golab}{Go{\l}\k{a}b}
\newcommand{\nablatau}{\nabla_{\!\tau}}
%\newcommand{\nablamu}{\nabla_{\!\mu}}

%
%
%	BEGIN DOCUMENT
%
%
\begin{document}

\thispagestyle{empty}

~\vskip -1.1 cm

	%
	% heading of first page
	%
{\footnotesize\noindent 
[version: \dataversione]
\hfill 
to appear on \emph{Netw. Heterog. Media}\par
%\hfill DOI~\href{http://dx.doi.org/10.4171/rmi/782}{10.4171/rmi/782} \par
}

\vspace{1.7 cm}

	%
	% title
	%
{\centering\Large\bf
Optimal reinforcing networks for elastic membranes
\\
}

\vspace{.7 cm}

	%
	% authors' names
	%
{\centering\sc 
Giovanni Alberti, Giuseppe Buttazzo,
Serena Guarino Lo Bianco, 
\\
\smallskip
and \'Edouard Oudet 
\\
}

\vspace{.6 cm}

	%
	% abstract, keywords and MSC numbers
	%
{\rightskip 1 cm
\leftskip 1 cm
\parindent 0 pt
\footnotesize
{\sc Abstract.}
In this paper we study the optimal reinforcement of an elastic membrane, 
fixed at its boundary, by means of a network (connected one-dimensional structure), 
that has to be found in a suitable admissible class. 
We show the existence of an optimal network, and observe that such network 
carries a multiplicity that in principle can be strictly larger than one. 
Some numerical simulations are shown to confirm this issue and to illustrate 
the complexity of the optimal network when the total length becomes large.
\par
\medskip\noindent
{\sc Keywords:} 
Optimal networks, elastic membranes, reinforcement, 
relaxed solution, Golab's semicontinuity theorem.
\par
\medskip\noindent
{\sc MSC (2010):} 
49J45, 49Q10, 35R35, 35J25, 49M05.
\par
}

\section{Introduction}
\label{sintro}
In the present paper we consider the vertical displacement of an elastic membrane 
under the action of an exterior load $f$ and fixed at its boundary; 
this amounts to solve the variational problem
\begin{equation}
\label{pb1}
\min\left\{ 
   \frac{1}{2}\int_\Omega|\nabla u|^2\,dx -\int_\Omega fu\,dx ~\colon u\in H^1_0(\Omega) 
\right\}
\,,
\end{equation}
or equivalently the elliptic PDE
\[
\text{$-\Delta u=f$ in $\Omega$,\qquad $u\in H^1_0(\Omega)$.}
\]
Here $\Omega$ is a bounded Lipschitz domain of $\R^2$, $f\in L^2(\Omega)$, 
and $H^1_0(\Omega)$ is the usual Sobolev space of functions with zero 
trace on the boundary $\bd\Omega$.

Our goal is to rigidify the membrane by adding a one-dimensional reinforcement 
in the most efficient way; the reinforcement is described by a one-dimensional
set $S\subset\Omega$ which varies in a suitable class of admissible choices. 
The effect of $S$ on the membrane is described by the energy
\begin{equation}
\label{energy}
\EE_f(S) := 
\inf\bigg\{
   \frac{1}{2}\int_\Omega|\nabla u|^2\,dx 
   + \frac{m}{2}\int_S|\nabla u|^2\,d\HH^1 
   - \int_\Omega fu\,dx
   ~\colon u\in C^\infty_c(\Omega)
\bigg\}
\end{equation}
that has to be maximized in the class of admissible choices for $S$. 

Here $m>0$ is a fixed parameter that represents the stiffness coefficient 
of the one-dimensional reinforcement, $\HH^1$ denotes the 
1-dimensional Hausdorff measure (that is, the length measure), 
while $C^\infty_c(\Omega)$ denotes the class of smooth functions with 
compact support in $\Omega$. 

The optimization problem we deal with consists in finding the 
``best'' reinforcement $S$ among all networks with total length 
bounded by a prescribed $L$, that is, all $S$ in the class
\[
\AL 
:=\Big\{ 
  \text{$S$ closed connected subset of $\Omega$ with $\HH^1(S)\le L$} 
\Big\}
\, , 
\]
We then consider the maximization of the energy functional $\EE_f(S)$ 
in \eqref{energy} over this class, that is
\begin{equation}
\label{optpb}
\max\Big\{ \EE_f(S) \colon S \in\AL \Big\}
\, .
\end{equation}

\subsection{Gradient versus tangential gradient}
%\label{tangrad}
%
From the modeling point of view, it is natural to ask whether the 
gradient $\nabla u$ that appears in the line integral 
\[
\frac{m}{2} \int_S|\nabla u|^2\,d\HH^1
\]
in \eqref{energy} should be replaced by the tangential gradient
$\nablatau u$. 
It turns out that the question is irrelevant, at least if we strictly
follow a variational approach, because the value of $\EE_f(S)$ 
is not affected by this change (Theorem~\ref{equivalence}).

Indeed, if $S$ is a compact curve of class $C^1$ contained in 
$\Omega$, it is well-known (see for instance \cite{bobuse}) that 
the relaxation of the integral
\[
F(u):=\int_S |\nabla u|^2\, d\HH^1
\, , \quad
u\in C^\infty_c(\Omega)
\, , 
\]
is given by
\[
F^*(u):= \int_S |\nablatau u|^2\, d\HH^1
\, , \quad
u\in H^1(S)
\, .
\]

This relaxation result holds also when $S$ is a compact connected
set with finite length, provided that $H^1(S)$ and $\nablatau$ 
are properly defined (this statement
is implicitly contained in Proposition~\ref{emumin}).
However, we warn the reader that this relaxation result does 
not holds if $S$ is an arbitrary compact subset of a curve of class 
$C^1$ with positive length; in particular, if $S$ is totally 
disconnected then the relaxation of $F$ is equal to $0$ 
for every $u$.

\subsection{Concentrated loads}
\label{conload}
Besides the case of \emph{distributed loads}, which consists 
in assuming that $f$ belongs to some Lebesgue class $L^p(\Omega)$, 
we may also consider the case 
of \emph{concentrated loads}, in which $f$ may have a more singular behavior. 
More precisely, we may assume that $f$ is a signed measure on $\Omega$. 
(In this case the linear term $\int_\Omega fu\,dx$ in \eqref{energy} should be 
written as $\int_\Omega u\,df$.)
%
%It is well known that if $f$ is a measure we may have $\EE_f(S)=-\infty$ 
%for some choices of $S$; in many cases however, the maximum in
%\eqref{optpb} is finite, and therefore these ``singular configurations''
%can be ruled out from our analysis.

We recall that a measure $f$ does not necessarily belong 
to the dual of the Sobolev space $H^1_0(\Omega)$, 
and therefore $\EE_f(S)=-\infty$ for some choices of $S$. 
Clearly, if $\EE_f(S)$ is finite for at least one $S$
problem~\eqref{optpb} still makes sense, and we may discard
all $S$ such that $\EE_f(S)=-\infty$.
However, it may happen that $\EE_f(S)=-\infty$ for every $S$ in $\AL$
and in that case problem~\eqref{optpb} does not makes sense
(see Example~\ref{example}).

\subsection{An optimization problem in a model for traffic congestion}
Another optimization problem requiring a similar analytical approach 
arises in a model for the reduction of traffic congestion in a given geographic area. 
Here the minimum problem is
\begin{equation}
\label{traffic}
\min\left\{
  \int_\Omega H(\sigma)\,dx 
  ~\colon \text{$-\dive \sigma=f$ in $\Omega$, $\sigma\cdot n=0$ on $\bd\Omega$}
\right\}
\, , 
\end{equation}
where $f=f^+-f^-$ and in the region $\Omega$ the function $f^+$ represents
the density of residents while $f^-$ is the density of working places.
The vector $\sigma$ is the traffic flux and the function $H$ describes
the transportation cost; the case $H(s):=|s|$ gives the classical Monge's
problem, while we talk of \emph{congested transport} if the function $H$ 
is super-linear at infinity, that is,
\[
\lim_{|s|\to\infty}\frac{H(s)}{|s|}=+\infty
\, .
\]
We refer to \cite{be52, bracarsan, bucagu, wa52} and to the references 
therein for a detailed description of this model. 
In the case $H(s):=|s|^2/2$, the minimization problem~\eqref{traffic} 
reduces, via a duality argument, to a problem of the form~\eqref{pb1}.

The optimization problem arises when a new road, or network of roads, 
$S$ has to be built to reduce the congestion; the total length $L$ 
is prescribed and on the new road the congestion function is strictly 
lower than $|s|^2/2$, for example $\alpha|s|^2/2$ with $\alpha<1$. 
The problem then consists in finding the optimal one-dimensional 
set $S$, and we end up, via a duality argument, with a problem similar
to \eqref{optpb}, with $m:=1/\alpha$.

\subsection{Relaxed formulation of the optimization problem}
\label{relaxed}
The optimization problem \eqref{optpb} is solved, 
\emph{in a suitable relaxed form}, in Section~\ref{sform}, 
to which we refer for precise statements and definitions.

We explain first the need for a relaxed formulation.
Consider a maximizing sequence $(S_n)$ for problem
\eqref{optpb}: since these sets are closed, connected, and satisfy
$\HH^1(S_n)\le L$, they converge, up to subsequence 
and in Hausdorff distance, to some connected
compact set $S_\infty$ with $\HH^1(S_\infty)\le L$
contained in the closure $\Omegabar$. 
The problem is that the functional $\EE_f(S)$ is not
upper semicontinuous in $S$ with respect to Hausdorff convergence, 
and therefore $S_\infty$ may be not a solution of problem~\eqref{optpb}.

However it turns out that $\EE_f(S)$ is upper semicontinuous
if we identify the sets $S$ with the measures $\HH^1\trace S$, namely
the restrictions of the Hausdorff measure $\HH^1$ to $S$, 
and consider the weak* convergence of measures instead of the 
Hausdorff convergence of sets.
More precisely, we extend the energy functional 
\eqref{energy} to general positive 
measures $\mu$ on $\Omegabar$ by setting
\[
\EE_f(\mu):=
\inf\bigg\{
  \frac{1}{2}\int_\Omega|\nabla u|^2\,dx 
  + \frac{m}{2}\int_\Omega|\nabla u|^2\,d\mu
  - \int_\Omega u\,df
  \colon u\in C^\infty_c(\Omega)
\bigg\}
\]
(here we assume that the load $f$ is a signed measure).
Notice that this new functional extends the previous
one in the sense that $\EE_f(\mu)=\EE_f(S)$ when 
$\mu=\HH^1\trace S$, and it is upper semicontinuous 
with respect to the weak* convergence of~$\mu$
(Proposition~\ref{usc}).
 
The problem now is that weak* limits of measures of the form 
$\HH^1\trace S$ are not necessarily measures of the same form,
and in particular the limit $\mu_\infty$ of the measures 
$\HH^1\trace S_n$ is a measure supported
on the set $S_\infty$, but may be not the measure
$\HH^1\trace S_\infty$; if it is, then $S_\infty$ is a solution 
of the optimization problem~\eqref{optpb}, but otherwise it is not.

These considerations lead to the following relaxed version of 
problem~\eqref{optpb}:
\begin{equation}
\label{maxbl}
\max\Big\{\EE_f(\mu) \colon \mu\in\ML\Big\}
\, , 
\end{equation}
where $\ML$ is the class of all weak* limits of measures of the 
form $\HH^1\trace S$ with~$S$ admissible network, that is,~$S\in\AL$.

The class $\M_L$ is completely 
described in Proposition~\ref{closure},
and the existence of a solution
of the relaxed optimization problem~\eqref{maxbl}
is proved in Theorem~\ref{exthml}. 
In Theorem~\ref{exthsl} we show
that there is always a solution of the form 
$\mu=\theta\,\HH^1\trace S$
where $S$ is a compact, connected set
with finite length contained in $\Omegabar$, 
and $\theta$ is a real-valued multiplicity function 
which satisfies $\theta(x)\ge 1$ for $\HH^1$-a.e.~$x\in S$.

If $\theta(x)= 1$ for $\HH^1$-a.e.~$x\in S$
then $S$ is a solution of the original 
optimization problem~\eqref{optpb}. If not, then 
problem~\eqref{optpb} may have no solution.

In Section~\ref{snum} we present some numerical simulations 
which show unexpected behaviors of the optimal measures $\mu$;
in particular we have evidence that in some situations
(and perhaps most situations) the multiplicity $\theta$ 
may be strictly larger 
than $1$ in a subset of positive length of $S$.

\subsection{Final remarks}
%\label{conclurem}
%
(i)~We do not know if problem~\eqref{maxbl}
is ``the'' relaxation of problem~\eqref{optpb}, 
and in particular we cannot exclude that some 
kind of Lavrentiev phenomenon occurs (for more details 
see Problem~\ref{relax} and Remark~\ref{lavrentiev}).

\smallskip
(ii)~The connectedness assumption on $S$ is crucial: 
indeed, removing this constraint allows a sequence of 
maximizing sets $S_n$ to spread all over $\Omega$ and 
leads to a relaxed problem of the form
\[
\max\Big\{\EE_f(\mu) \colon \mu\in\M^+(\Omegabar)\, , \ \mu(\Omegabar)\le L \Big\}
\,,
\]
where $\M^+(\Omegabar)$ is the class of all positive measures on $\Omegabar$. 
This optimization problem has been studied in \cite{buva05} and in \cite{buouve15}, 
where it is shown that the optimal measure $\mu$ actually belongs to $L^p(\Omega)$ 
and the exponent $p$ depends on the summability of the right-hand side $f$. 
Similar problems, in the extreme case when in the reinforcing region a Dirichlet 
condition is imposed, have been considered in \cite{busa07, busava06}.

\smallskip
(iii)~In the definitions of $\EE_f(S)$ and $\EE_f(\mu)$
we required that $u$ belongs to $C^\infty_c(\Omega)$ 
to ensure that all integrals makes sense.
Clearly it would be equivalent to consider 
continuous functions in $H^1_0(\Omega)$ that are of class $C^1$
in a neighborhood of $S$ or of the support of $\mu$.
One can go further, and take $u$ in a suitably defined Sobolev space, 
so that the infimum in the definition of $\EE_f(\mu)$
is a minimum (Proposition~\ref{emumin}).

\smallskip
(iv)~In our model the stiffener $S$ is a one-dimensional set 
and its contribution to the total energy is described by the line integral
\[
\frac{m}{2}\int_S|\nablatau u|^2\,d\HH^1
\, .
\]
This choice is consistent with the fact that the integral above is the 
variational limit as $\eps\to0$ of the integrals 
\[
\frac{m}{2\eps}\int_{S_\eps}|\nabla u|^2\,dx
\, , 
\]
where $S_\eps$ is the thin strip 
$S_\eps:=\big\{x\in\R^2 \colon \dist(x,S)<\eps/2\big\}$.
In other words, the one-dimensional stiffener $S$ can be seen as the limit 
structure of two-dimensional thin strips of thickness $\eps$ and 
elastic constants $m/\eps$ 
(see for instance \cite{sapa80} and \cite{acbupe88}).

\subsection*{Structure of the paper}
In Section~\ref{sform} we give a precise formulation of
the relaxed optimization problem, and state the main 
existence results (Theorems~\ref{exthml} and~\ref{exthsl}).
In Section~\ref{sproofs} we prove the results stated 
in Section~\ref{sform}.
In Section~\ref{snece} we give some additional
properties that solution of the relaxed optimization 
problem must satisfy.
Section~\ref{snum} is devoted to the numerical approximation 
of the relaxed problem. 
Section~\ref{sopen} contains additional remarks
and open problems.

\subsection*{Acknowledgements}
This work is part of the PRIN projects 2017TEXA3H and 2017BTM7SN, funded by 
the Italian Ministry of Education and Research (MIUR). 
The first three authors are members of the research group GNAMPA of INdAM. 
\'Edouard Oudet gratefully acknowledges the support of the ANR through 
projects COMEDIC and OPTIFORM, and the support of the Labex Persyval Lab 
through project GeoSpec.

\section{Existence of solutions of the relaxed problem}
\label{sform}
Let us fix/recall the basic notation. 
Unless we specify otherwise, for the rest of this paper
$\Omega$ is a bounded Lipschitz domain in $\R^2$, the load 
$f$ is a signed measure on $\Omega$, the class $\AL$ of 
admissible reinforcements consists of all closed connected 
subsets of $\Omega$ with $\HH^1(S)\le L$.

For every $S\in\AL$ the functional $\EE_f(S)$ is defined by 
formula \eqref{energy}, with the linear term $\int_\Omega fu\,dx$ 
written as $\int_\Omega u\,df$ because $f$ is now a 
measure. The optimization problem \eqref{optpb} 
consists in finding the maximum of $\EE_f(S)$
among all $S\in\AL$, and it makes sense provided that 
$\EE_f$ is not identically $-\infty$ (see 
Subsection~\ref{conload}).

\smallskip
\emph{In the following we assume that $\EE_f(S)$ 
is finite for some set $S\in\AL$.} 

\smallskip
As explained in Subsection~\ref{relaxed}, 
we denote by $\M^+(\Omegabar)$ the class of all positive
finite measures on $\Omegabar$, and extend the 
$\EE_f$ to all $\mu$ in $\M^+(\Omegabar)$ 
by 
\begin{equation}
\label{emu}
\EE_f(\mu) := 
\inf \Big\{ 
	E_f(\mu,u) \colon u\in C^\infty_c(\Omega)
\Big\}
\, , 
\end{equation}
where 
\begin{equation}
\label{emuu}
E_f(\mu,u) := 
\frac{1}{2}\int_\Omega|\nabla u|^2\,dx
+ \frac{m}{2}\int_\Omega|\nabla u|^2\,d\mu
- \int_\Omega u\,df
\, .
\end{equation}
Then we denote by $\ML$ the weak* closure in $\M^+(\Omegabar)$ 
of the class of all measures of the form
$\HH^1\trace S$ with $S\in\AL$, 
in short
\[
\ML := 
\overline{ 
\big\{ \HH^1\trace S \colon S\in\AL \big\}
}^{\textstyle\text{\,weak*}}
\, ,
\]
and the relaxed optimization problem \eqref{maxbl} 
consists in finding the maximum of $\EE_f(\mu)$
among all $\mu\in\ML$.

\begin{proposition}
\label{closure}
The class $\ML$ consist of all 
$\mu\in\M^+(\Omegabar)$
such that
\begin{itemize}
\item[\rm(a)]
$\mu(\Omegabar)\le L$;
\item[\rm(b)]
the support $S$ of $\mu$ is a connected, compact set in $\Omegabar$
with $\HH^1(S)\le L$;
\item[\rm(c)]
$\HH^1\trace S \le \mu$.
\end{itemize}
\end{proposition}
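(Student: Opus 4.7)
The plan is to prove the two set inclusions separately. The inclusion $\ML \subseteq \{\mu : \text{(a)--(c)}\}$ is the semicontinuity direction, while the reverse inclusion is a construction.

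For the semicontinuity direction, take $\mu_n = \HH^1 \trace S_n$ converging weak* to $\mu$ with $S_n \in \AL$. Condition (a) is immediate by testing weak* convergence against $1 \in C(\Omegabar)$: $\mu(\Omegabar) = \lim_n \HH^1(S_n) \le L$. By Blaschke's compactness theorem, after a subsequence $S_n \to S_\infty$ in Hausdorff distance, where $S_\infty \subseteq \Omegabar$ is compact and connected (connectedness is preserved under Hausdorff limits in a compact metric space), and \Golab's theorem gives $\HH^1(S_\infty) \le L$. For (c) I would combine a local version of \Golab, $\HH^1(S_\infty \cap V) \le \liminf_n \HH^1(S_n \cap V)$ for open $V$, with the standard upper semicontinuity $\mu(K) \ge \limsup_n \mu_n(K)$ on compacta: for any open $U$ and compact $K \subseteq U$ with $\HH^1(S_\infty \cap \partial K) = 0$, one obtains $\mu(U) \ge \mu(K) \ge \liminf_n \HH^1(S_n \cap \mathring{K}) \ge \HH^1(S_\infty \cap \mathring{K}) = \HH^1(S_\infty \cap K)$, and exhausting $U$ by such $K$ yields $\HH^1 \trace S_\infty \le \mu$. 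Consequently $S_\infty \subseteq \mathrm{supp}\,\mu$, since any connected compact set of positive finite length is one-rectifiable and has full $\HH^1$-support; the reverse inclusion $\mathrm{supp}\,\mu \subseteq S_\infty$ is standard, as an open ball disjoint from $S_\infty$ is eventually disjoint from every $S_n$ and so carries zero $\mu$-mass. Hence $\mathrm{supp}\,\mu = S_\infty$, confirming (b) and (c); the degenerate case where $S_\infty$ is a singleton is trivial.

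For the construction direction, given $\mu$ satisfying (a)--(c) with support $S$, set $\nu := \mu - \HH^1 \trace S$, which by (c) is a positive finite measure supported on $S$ with $\nu(\Omegabar) \le L - \HH^1(S)$. I would approximate $\nu$ weak* by atomic measures $\nu_k := \sum_{i=1}^{N_k} c_{k,i}\, \delta_{x_{k,i}}$ with $x_{k,i} \in S$ and $\sum_i c_{k,i} = \nu(\Omegabar)$, obtained by partitioning $S$ into Borel pieces of shrinking diameter and collapsing the $\nu$-mass of each piece to a representative point. For each $k$, attach at each $x_{k,i}$ a small connected set $C_{k,i}$ (for instance a circular loop through $x_{k,i}$) of $\HH^1$ measure exactly $c_{k,i}$ and diameter tending to $0$, and set $S_k := S \cup \bigcup_i C_{k,i}$. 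Then $S_k$ is connected with $\HH^1(S_k) = \HH^1(S) + \sum_i c_{k,i} \le L$, so $S_k \in \AL$, and $\HH^1 \trace S_k = \HH^1 \trace S + \sum_i \HH^1 \trace C_{k,i} \rightharpoonup \HH^1 \trace S + \nu = \mu$ weak*, since each $\HH^1 \trace C_{k,i}$ concentrates at $x_{k,i}$.

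The main technical obstacle is the localized \Golab inequality used in the semicontinuity direction: unlike the global statement, the slices $S_n \cap V$ are generally disconnected, so a more refined covering argument is required. A secondary difficulty in the construction direction is that $\AL$ requires its sets to be contained in the open set $\Omega$, whereas the support $S$ of the target measure may touch $\bd\Omega$; when some $x_{k,i} \in \bd\Omega$, one must first replace $S$ by a Hausdorff-close subset of $\Omega$ (via a slight inward perturbation) before attaching the loops, and then diagonalize to preserve $\HH^1 \trace S_k \rightharpoonup \mu$.
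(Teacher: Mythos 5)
Your overall strategy coincides with the paper's: the two inclusions are proved separately, the forward one by Hausdorff compactness plus a localized form of \Golab's theorem, the backward one by approximating the excess $\mu-\HH^1\trace S$ by atoms sitting on $S$ and attaching small connected sets of the corresponding lengths. The one substantive difference is where the real work happens. You reduce the inequality $\HH^1\trace S\le\mu$, via the portmanteau inequalities on compacta, to the localized \Golab\ statement $\HH^1(S\cap V)\le\liminf_n\HH^1(S_n\cap V)$ for open $V$, and you explicitly leave that statement unproven (``a more refined covering argument is required''). The reduction itself is correct, and the localized statement is a citable theorem (Ambrosio--Tilli, Dal Maso--Toader), but it is precisely the nontrivial content of the proposition. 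The paper instead proves the measure inequality directly in Proposition~\ref{similgolab}: each $S_n$ is parametrized by a closed Lipschitz loop $\gamma_n$ with multiplicity $2$ and degree $0$ at $\HH^1$-a.e.\ point and $\Lip(\gamma_n)\le 2L$ (the Alberti--Ottolini structure theorem), one passes to a uniform limit $\gamma$ which still has multiplicity at least $2$, and lower semicontinuity of $\gamma\mapsto\int_0^1\varphi(\gamma)|\dot\gamma|\,dt$ against positive test functions $\varphi$ gives $\mu\ge\HH^1\trace S$ in one stroke, subsuming both the global and the localized \Golab. So in the hard direction your argument is a correct reduction to a black box rather than a self-contained proof.

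In the construction direction two details need repair. First, if $\nu=\mu-\HH^1\trace S$ has an atom of mass $a>0$, your partition scheme yields a coefficient $c_{k,i}\ge a$ for every $k$, and the proposed circular loop of circumference $c_{k,i}$ then has diameter $\ge a/\pi$ bounded away from zero, so $\HH^1\trace C_{k,i}$ converges to the uniform measure on a fixed circle rather than to $a\,\delta_x$, and $\HH^1\trace S_k\not\rightharpoonup\mu$. You must either split each atom into many small atoms at nearby points of $S$ (possible since every point of a nondegenerate continuum is an accumulation point of it) --- this is exactly why Lemma~\ref{approx} requires the coefficients $a_{nj}$ to converge to $0$ \emph{uniformly} --- or replace the circle by a connected set of length $c_{k,i}$ but small diameter. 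Second, your attached sets may meet $S$, or one another, in sets of positive length, in which case $\HH^1\trace S_k$ falls strictly short of $\HH^1\trace S+\sum_i\HH^1\trace C_{k,i}$ and the limit is not $\mu$; a generic choice avoids this, but it has to be arranged, and the paper does so cleanly by using segments in a single direction $e$ transversal to the approximate tangent of $S$, with endpoints in general position. On the credit side, your remark that $\AL$ consists of subsets of the \emph{open} set $\Omega$ while the support of a limit measure may touch $\bd\Omega$ is a genuine point that the paper's proof passes over in silence, and your inward perturbation plus diagonalization is the right way to handle it.
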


We can now state our first existence result:

\begin{theorem}
\label{exthml}
The optimization problem~\eqref{maxbl}
admits a solution~$\mu\in\ML$.
\end{theorem}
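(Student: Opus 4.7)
The plan is to apply the direct method of the calculus of variations, which here reduces to four quick checks: the supremum is finite, maximizing sequences are weak*-precompact, $\ML$ is weak*-closed, and $\EE_f$ is weak*-upper semicontinuous on $\ML$.

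Finiteness is immediate: plugging $u\equiv 0$ into \eqref{emuu} gives $E_f(\mu,0)=0$, so $\EE_f(\mu)\le 0$ for every $\mu\in\M^+(\Omegabar)$; the standing hypothesis supplies some $S_0\in\AL$ with $\EE_f(S_0)\in\R$, and since $\HH^1\trace S_0$ belongs to $\ML$, the supremum in \eqref{maxbl} lies in $(-\infty,0]$. For compactness, I would take a maximizing sequence $(\mu_n)\subset\ML$; Proposition~\ref{closure}(a) gives $\mu_n(\Omegabar)\le L$ for every $n$, so the sequence is bounded in total variation, and the Banach--Alaoglu theorem (together with the compactness of $\Omegabar$) produces a non-relabeled subsequence and a measure $\mu_\infty\in\M^+(\Omegabar)$ with $\mu_n\rightharpoonup^*\mu_\infty$. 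Closure of $\ML$ comes for free from its definition: $\ML$ is by construction the weak* closure of $\{\HH^1\trace S : S\in\AL\}$, so $\mu_\infty\in\ML$.

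To conclude, I would invoke the upper semicontinuity of $\EE_f$ with respect to weak* convergence, which is Proposition~\ref{usc} from earlier in the paper, to write
\[
\EE_f(\mu_\infty)\ge \limsup_{n\to\infty}\EE_f(\mu_n)=\sup\big\{\EE_f(\mu):\mu\in\ML\big\},
\]
which makes $\mu_\infty$ a maximizer.

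The main obstacle is not located in Theorem~\ref{exthml} itself, where the direct method runs smoothly, but in the auxiliary upper semicontinuity result: $\EE_f(\mu)$ is defined as an infimum over test functions $u\in C^\infty_c(\Omega)$ of a functional that, for fixed $u$, is \emph{linear} in $\mu$ against the bounded continuous integrand $|\nabla u|^2$ (on $\Omegabar$), hence weak*-continuous in $\mu$; so $\EE_f$ is automatically upper semicontinuous as an infimum of weak*-continuous functionals. That observation, packaged as Proposition~\ref{usc}, is the conceptual heart of the existence proof; once it is in hand, the assembly above is essentially forced.
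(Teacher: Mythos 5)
Your proof is correct and follows the same route as the paper: the paper's proof of Theorem~\ref{exthml} is exactly ``Proposition~\ref{usc} plus weak* compactness of $\ML$,'' and your write-up simply makes explicit the direct-method details (total-variation bound from $\mu(\Omegabar)\le L$, Banach--Alaoglu, and the fact that $\ML$ is weak*-closed by construction). Your closing observation that the upper semicontinuity of $\EE_f$ as an infimum of weak*-continuous functionals is the real content also matches the paper's one-line proof of Proposition~\ref{usc}.
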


Let $\mu$ be a measure in $\ML$ with support $S$.
We want to show that the value of
$\EE_f(\mu)$ is not affected if we replace
the full gradient $\nabla u$ in the integral 
$\int_\Omega |\nabla u|^2 d\mu$ in \eqref{emuu} with 
the tangential gradient $\nablatau u$, 
and if we remove from the measure $\mu$ 
the part which is singular with respect 
to~$\HH^1\trace S$.

\smallskip
To this purpose we need to recall some well-known 
properties of connected sets $S$ with finite length
(for more details we refer to standard references, 
such as~\cite{falconer}).

\subsection{Connected sets with finite length}
%\label{conset}
%
Let $S$ be a compact connected set 
in $\R^d$ with finite length. 
Then $S$ is rectifiable, 
which means that it can be covered 
(up to an $\HH^1$-negligible subset) by
countably many embedded curves of class $C^1$,
and indeed it can be parametrized (although not 
bijectively) by a single Lipschitz path.

Moreover $S$ admits a tangent line $\tau(x)$ at 
$\HH^1$-a.e.~$x\in S$. Thus, for every such $x$ and 
every $u:\R^d\to\R$ of class $C^1$ we can define the 
tangential gradient~$\nablatau u(x)$.

\subsection{The functionals $\boldsymbol{E^*_f}$ 
and $\boldsymbol{\EE^*_f}$ }
%\label{estar}
%
Let $\mu$ be a measure in $\M^+(\Omegabar)$ of the form
\[
\mu=\theta\,\HH^1\trace S
\]
where $S$ is a compact connected set in $\Omegabar$ 
with finite length.
We set
\begin{equation}
\label{emu*}
\EE^*_f(\mu) 
:= \inf\Big\{ 
	E^*_f(\mu,u) \colon u\in C^\infty_c(\Omega) 
\Big\}
\, ,
\end{equation}
where 
\[
E^*_f(\mu,u) := 
\frac{1}{2}\int_\Omega|\nabla u|^2\,dx
+ \frac{m}{2}\int_S |\nablatau u|^2\,\theta \, d\HH^1
- \int_\Omega u\,df
\, .
\]

\begin{theorem}
\label{equivalence}
Let $\mu$ be a measure in $\ML$ with support $S$,
and let $\mu^a$ be the absolutely continuous part of 
$\mu$ with respect to $\HH^1\trace S$.
Then 
\[
\EE_f(\mu)= \EE_f(\mu^a)= \EE_f^*(\mu^a)\,.
\] 
\end{theorem}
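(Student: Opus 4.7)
The chain of inequalities $\EE^*_f(\mu^a) \le \EE_f(\mu^a) \le \EE_f(\mu)$ is immediate: the first follows from $|\nablatau u| \le |\nabla u|$ at $\HH^1$-a.e.\ point of $S$, which gives $E^*_f(\mu^a, u) \le E_f(\mu^a, u)$ for every admissible $u$; the second follows from $\mu^a \le \mu$, which gives $E_f(\mu^a, u) \le E_f(\mu, u)$. Taking the infimum over $u \in C^\infty_c(\Omega)$ yields both bounds. The substance of the theorem is therefore the reverse inequality $\EE_f(\mu) \le \EE^*_f(\mu^a)$, which I would establish by a recovery-sequence construction.

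Given $u \in C^\infty_c(\Omega)$ and $\delta > 0$, the plan is to construct $v \in C^\infty_c(\Omega)$ with $E_f(\mu, v) \le E^*_f(\mu^a, u) + \delta$, combining two modifications of $u$. First, to reduce $|\nabla v|^2$ to $|\nablatau u|^2$ on $S$, I would flatten $u$ in the normal direction to the regular part of $S$: using rectifiability, cover all but an $\HH^1$-mass $<\epsilon$ of $S$ by finitely many disjoint $C^1$ arcs $\Gamma_j$ whose tubes $T^\rho_j := \{x : \dist(x, \Gamma_j) < \rho\}$ admit a smooth nearest-point projection $\pi_j$, then replace $u$ by $u \circ \pi_j$ on the inner tube $T^{\rho/2}_j$ and interpolate via a cutoff on $T^\rho_j \setminus T^{\rho/2}_j$. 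Inside the inner tube the function is constant along normal fibres, so $|\nabla v|^2 = |\nablatau u \circ \pi_j|^2$, and a Fubini argument on the tubular foliation gives $\int_{T^\rho_j} |\nabla v|^2 \, d(\HH^1 \trace \Gamma_j) \to \int_{\Gamma_j} |\nablatau u|^2 \, d\HH^1$ as $\rho \to 0$. Second, to neutralize the singular part $\mu^s$, concentrated on a set $N \subset S$ with $\HH^1(N) = 0$, I would multiply by a cutoff $\chi_k$ vanishing in a neighborhood of $N$ and converging to $1$ in $H^1_0(\Omega)$; such cutoffs exist because $\HH^1$-small subsets of $\R^2$ can be covered by balls whose $H^1_0$-capacities sum to an arbitrarily small quantity, thanks to the logarithmic scaling of 2-capacity in the plane. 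A final mollification restores $C^\infty_c$ regularity.

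The main obstacle is the careful coordination of the three approximation parameters $\epsilon$ (discarded fraction of $S$), $\rho$ (tube thickness) and $k$ (cutoff for $\mu^s$) so that all the desired convergences hold simultaneously. Several subtleties intervene: the branching points of $S$, where no single tangent line exists, have $\HH^1$-measure zero and can be absorbed into the $\epsilon$-discard; the transition annulus $T^\rho_j \setminus T^{\rho/2}_j$ must carry vanishingly small $\mu$-mass as $\rho \to 0$, which follows from outer regularity of $\mu$ combined with the Lipschitz bound on $v$ inherited from $u \in C^1$; the capacitary cutoff $\chi_k$ may intersect the tubes $T^\rho_j$, so the limits must be taken in the order $k \to \infty$, then $\rho \to 0$, then $\epsilon \to 0$; finally, the Dirichlet energy $\int_\Omega |\nabla v|^2 \, dx$ must stay close to $\int_\Omega |\nabla u|^2 \, dx$ under all three modifications, which reduces to showing that the modified region has vanishing two-dimensional Lebesgue measure.
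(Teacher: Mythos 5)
Your reduction to the single inequality $\EE_f(\mu)\le\EE_f^*(\mu^a)$ is correct, and your treatment of the tangential part (flattening $u$ along the normal directions of finitely many $C^1$ arcs covering most of $S$) is essentially the paper's own argument (Lemma~\ref{effe3} and Lemma~\ref{killnorlemma}), except that the paper projects along a fixed coordinate direction onto smooth graphs rather than using the nearest-point projection, which for a curve that is merely $C^1$ need not be well defined or differentiable on any tube. The genuine gap is in your treatment of the singular part. You propose a multiplicative capacitary cutoff $\chi_k$ vanishing near the set $N$ carrying $\mu^s$, on the grounds that $\HH^1(N)=0$ forces the $2$-capacity of $N$ to be small. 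This is false: in $\R^2$ a compact set has zero $2$-capacity only if its Hausdorff dimension is zero, whereas $N$ need only be $\HH^1$-null and can have any dimension in $[0,1)$. For instance, take $S$ a segment and $\mu=\HH^1\trace S+\nu$ with $\nu$ the natural measure on a Cantor subset $C\subset S$ of dimension $1/2$; by Proposition~\ref{closure} this $\mu$ belongs to $\ML$, its singular part is carried by $C$, and $C$ has positive $2$-capacity, so no cutoffs $\chi_k$ with the properties you require exist. (Your covering argument also fails quantitatively: $\sum_i r_i<\eps$ does not control $\sum_i 1/\log(1/r_i)$ when the number of balls is unbounded.) Even where the capacity does vanish there is a further problem: $\chi_k u\to u$ in $H^1(dx)$ does not give $\nabla(\chi_k u)\to\nabla u$ in $L^2(\mu^a)$, since the term $u\,\nabla\chi_k$ is only controlled against Lebesgue measure, not against $\mu^a$.

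The paper's way around this (Lemmas~\ref{effe1}, \ref{effe2} and~\ref{killsinglemma}) is to replace the multiplicative cutoff by a composition: one builds a smooth map $\phi:\R^2\to\R^2$ with $|\phi(x)-x|\le\eps$, $|\nabla\phi|\le1$ everywhere and $\nabla\phi=0$ on a neighborhood of a compact set $K\subset N$ carrying all but $\eps$ of the singular mass, and sets $v:=u\circ\phi$. Then $\nabla v=0$ on $K$ and $|\nabla v|\le|\nabla u(\phi)|\le|\nabla u|+C\eps$ pointwise, so the Dirichlet term, the $\mu$-term and the load term of $E_f(\mu,v)$ are all controlled at once. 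The map $\phi$ is built coordinatewise from one-dimensional primitives, and the construction uses only that the projections of $K$ onto the coordinate axes are Lebesgue-null, which does follow from $\HH^1(K)=0$; no capacity enters, and that is precisely what lets the argument survive singular parts of positive Hausdorff dimension. If you want to keep the rest of your outline, you should substitute this composition device for the capacitary cutoff in your second modification.
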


This statement shows that the maximum problem~\eqref{maxbl}
can be reformulated as the maximum of $\EE_f(\mu)$, or 
equivalently of $\smash{ \EE_f^*(\mu^a) }$, on the class
\begin{equation}
\label{mla}
\MLa
:= \Big\{
	\mu\in\ML \colon
	\text{$\mu$ is absolutely continuous w.r.t.~$\HH^1\trace \textrm{spt}(\mu)$}
\Big\}
\, .
\end{equation}
In view of Proposition~\ref{closure}, $\MLa$ agrees with the class 
of all measures of the form $\mu=\theta\,\HH^1\trace S$ where
\begin{itemize}
\item[\rm(a)]
$S$ is a compact connected set in $\Omegabar$ with $\HH^1(S)=L$, 
\item[\rm(b)]
$\theta$ is a real-valued multiplicity function with $\theta(x)\ge 1$ 
for $\HH^1$-a.e.~$x\in S$, 
\item[\rm(c)]
$\mu(\Omegabar)=\int_S \theta \, d\HH^1 \le L$.
\end{itemize}
The following improvement of Theorem~\ref{exthml} holds:

\begin{theorem}
\label{exthsl}
Problem~\eqref{maxbl} admits a solution $\mu$ 
in the class $\MLa$ with $\mu(\Omegabar)=L$, 
which is therefore a solution of
\[
\max\Big\{ 
	\EE_f^*(\mu) \colon \mu\in \MLa\, , \ \mu(\Omegabar)=L
\Big\}
\, .
\]
If in addition $f$ belongs to $L^p(\Omega)$ 
for some $p>1$ and the support of $f$ is $\Omegabar$, 
then \emph{every} solution of problem~\eqref{maxbl} 
belongs to $\ML^a$ and satisfies $\mu(\Omegabar)=L$. 
\end{theorem}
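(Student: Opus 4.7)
\emph{Existence.} I would start from a maximizer $\mu_0\in\ML$ provided by Theorem~\ref{exthml} and pass to its absolutely continuous part $\mu_0^a$ (with respect to $\HH^1\trace\textrm{spt}(\mu_0)$), which by Theorem~\ref{equivalence} is again a maximizer and belongs to $\MLa$. To reach total mass exactly $L$, I would exploit the evident monotonicity of $\EE_f$ in $\mu$ read off from~\eqref{emu}--\eqref{emuu}: if $\HH^1(\textrm{spt}(\mu_0))>0$, add a uniform multiple of $\HH^1\trace\textrm{spt}(\mu_0)$ chosen so that the total mass becomes $L$; in the degenerate case $\mu_0^a=0$, replace it by $\HH^1\trace S'$ for any connected $S'\subset\Omegabar$ with $\HH^1(S')=L$. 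In either case monotonicity keeps the adjusted measure a maximizer in $\MLa$ with mass $L$, and Theorem~\ref{equivalence} then yields the corresponding formulation for $\EE_f^*$.

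\emph{Reduction.} Assume now $f\in L^p(\Omega)$ with $p>1$ and $\textrm{spt}(f)=\Omegabar$. My plan is to prove the mass-saturation property: every maximizer $\mu\in\ML$ satisfies $\mu(\Omegabar)=L$. This will suffice, because Theorem~\ref{equivalence} makes the a.c.~part $\mu^a$ a maximizer in $\ML$ as well (the non-trivial condition $\HH^1\trace S\le\mu^a$ of Proposition~\ref{closure} is inherited from $\mu\ge\HH^1\trace S$ via $\mu^s\perp\HH^1\trace S$); mass saturation applied to $\mu^a$ then gives $\mu^a(\Omegabar)=L$, which combined with $\mu^a(\Omegabar)\le\mu(\Omegabar)\le L$ forces $\mu^s=0$ and $\mu=\mu^a\in\MLa$.

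\emph{Mass saturation.} Suppose, for contradiction, that $\mu\in\ML$ is a maximizer with $\mu(\Omegabar)<L$; set $S:=\textrm{spt}(\mu)$ and let $u^*$ be the unique (by strict convexity) minimizer of $E_f(\mu,\cdot)$ given by Proposition~\ref{emumin}. A short comparison with $\HH^1\trace S'$ for a curve $S'$ along which the Dirichlet solution of $-\Delta v=f$ has nonvanishing tangential gradient shows $\max\EE_f>\EE_f(0)$, whence $\HH^1(S)>0$ and $S\cap\Omega\ne\emptyset$. Fix $p\in S\cap\Omega$ and $r\in(0,\min\{L-\mu(\Omegabar),\dist(p,\partial\Omega)\})$. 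For each unit vector $v$, the segment $\gamma_v:=\{p+tv:0\le t\le r\}$ attaches to $S$ at $p$, and $\nu_v:=\mu+\HH^1\trace\gamma_v$ lies in $\ML$ (verifying the three conditions of Proposition~\ref{closure}). Monotonicity and maximality give $\EE_f(\nu_v)=\EE_f(\mu)$; tracing the chain $\EE_f(\nu_v)=E_f(\nu_v,u^*_{\nu_v})\ge E_f(\mu,u^*_{\nu_v})\ge\EE_f(\mu)$ together with uniqueness will then yield $u^*_{\nu_v}=u^*$ and $\int_{\gamma_v}|\nabla u^*|^2\,d\HH^1=0$.

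\emph{Main obstacle.} The remaining step, which I expect to be the most delicate, is to upgrade ``$\nabla u^*=0$ on every segment $\gamma_v$ in the $\HH^1$-a.e.~sense'' to ``$\nabla u^*=0$ on a set of positive planar measure''. Away from $S$ the Euler--Lagrange equation gives $-\Delta u^*=f$ distributionally, and elliptic regularity upgrades $u^*$ to $W^{2,p}_{loc}(\Omega\setminus S)$; by standard Sobolev trace theory the 1D restriction of $\nabla u^*$ is continuous along $\HH^1$-a.e.~straight segment, so the integral identity above pins down $\nabla u^*\equiv 0$ pointwise on $\gamma_v\setminus S$ for a.e.~direction $v$, and Fubini in polar coordinates around $p$ gives $\nabla u^*=0$ a.e.~on $B(p,r)$. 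Thus $u^*$ is constant on $B(p,r)$, so $-\Delta u^*=0$ distributionally there; comparison with $-\Delta u^*=f$ on $\Omega\setminus S$ forces $f\equiv 0$ on the nonempty open set $B(p,r)\setminus S$, contradicting $\textrm{spt}(f)=\Omegabar$.
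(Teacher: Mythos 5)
Your proposal is correct and follows the same overall architecture as the paper: existence by passing to the absolutely continuous part of a maximizer (Theorem~\ref{equivalence}) and then padding the density to reach total mass $L$ using monotonicity (Lemma~\ref{nonsharp}); the second part by a mass-saturation lemma applied to $\mu^a$, which forces $\mu^a(\Omegabar)=L$ and hence $\mu=\mu^a$. Where you diverge is in the proof of saturation, which the paper isolates as Lemma~\ref{sharp}. Both arguments share the same perturbation core: attach a short segment, use monotonicity plus maximality plus strict convexity and uniqueness of the minimizer to conclude that the tangential derivative of $u^*$ vanishes $\HH^1$-a.e.\ on the attached segment. The paper then finishes in one stroke: since $-\Delta u=f$ on $\Omega\setminus S$ and $\mathrm{spt}(f)=\Omegabar$, the zero set of $\nabla u$ has empty interior there, so one can pick a \emph{single} point $x$ near $S$ with $\nabla u(x)\ne 0$ and a \emph{single} connecting segment not orthogonal to $\nabla u(x)$, getting an immediate contradiction from $C^1$ regularity off $S$. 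You instead attach segments in \emph{all} directions from a point $p\in S\cap\Omega$ and integrate over directions; this buys you a slightly more self-contained ending (the hypothesis $\mathrm{spt}(f)=\Omegabar$ is only invoked at the very last line) at the cost of an extra measure-theoretic step. In that step there is a small gap you should close: knowing that the \emph{radial} derivative of $u^*$ vanishes a.e.\ on $B(p,r)$ does not by itself give $\nabla u^*=0$ a.e.\ (a purely angular function also has zero radial derivative); you must add either that every ray's trace is constant equal to the common value $u^*(p)$ (well defined since $u^*\in H^1(S\cup\gamma_v)$ is continuous at $p$), or that an angular-only profile fails to be in $H^1$ near $p$ in dimension two. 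Likewise, what the energy identity kills on $\gamma_v$ is $|\partial_v u^*|$, not $|\nabla u^*|$; your Fubini step only uses the radial component, so this is harmless but should be stated accurately. Finally, note that Proposition~\ref{emumin} applies to measures of the form $\theta\,\HH^1\trace S$ with $\theta$ bounded below, so the saturation argument should be run on $\mu^a$ (as your reduction paragraph in fact does), not on an arbitrary $\mu\in\ML$.
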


\begin{remark}
(i)~The first part of Theorem~\ref{exthsl} is
little more than a corollary of Theorem~\ref{equivalence}. 
The second part, however, requires a more delicate argument.

\smallskip
(ii)~The assumption that the support of $f$ is $\Omegabar$
in the second part of Theorem~\ref{exthsl} can probably 
be weakened, but not entirely removed.
Indeed if $f=0$ then $\EE_f(\mu)=0$ for
every $\smash{\mu\in\M^+(\Omegabar)}$, and in particular 
every $\mu\in\ML$ is a solution of problem~\eqref{maxbl}.

\smallskip
(iii)~As already pointed out in Subsection~\ref{relaxed}, 
if the solution $\mu=\theta\HH^1\trace S$ given by 
Theorem~\ref{exthsl} verifies $\theta=1$~a.e., that is, 
$\mu=\HH^1\trace S$, then $S$ is a solution of the original 
optimization problem~\eqref{optpb}. However, the following 
example suggests that this is not always the case.
\end{remark}

\begin{example}
\label{example}
Let $A,B$ be two points in $\Omega$ such that the closed segment $[A,B]$
is contained in $\Omega$, and let $f:=\delta_A-\delta_B$. 
Regarding the maximization problem~\eqref{maxbl} three possibilities 
may occur, depending on the choice of $L$:
\begin{itemize}
\item 
If $L<|A-B|$ then we have $\EE_f(S)=-\infty$ for every $S$ in the class $\AL$. 
Indeed, since a connected set $S$ of length $L<|A-B|$ cannot contain both 
$A$ and $B$, and since the capacity of a point in the plane is zero, 
we may construct a sequence of function $u_n \in C^\infty_c(\Omega)$ which 
vanish on $S$, tend to $0$ in $H^1(\Omega)$, and satisfy
$u_n(A)-u_n(B) \to +\infty$.
\item 
If $L=|A-B|$ then the unique set $S$ for which the energy is not $-\infty$ is the 
segment $[A,B]$, which is then the unique solution of the maximization 
problem~\eqref{optpb}, while $\mu:=\HH^1\trace S$ is the unique solution 
of problem~\eqref{maxbl}.
\item 
If $L>|A-B|$ and $\mu=\theta\,\HH^1\trace S$ is a solution 
of problem~\eqref{maxbl} as in Theorem~\ref{exthsl}, then the numerical 
simulations in Section \ref{snum} give a strong indication 
that $\theta>1$ on a subset of $S$ with positive length.
\end{itemize}
\end{example}

We conclude this section by defining the Sobolev space 
$H^1_0(\Omega)\cap H^1(S)$, and showing that
the infimum in formula \eqref{emu*} is actually 
a minimum.
This fact will be used in the proof of the second part
of Theorem~\ref{exthsl}.

\subsection{The Sobolev space $\boldsymbol{H^1(S)}$}
%\label{HS}
%
Let $S$ be a compact connected set 
in $\R^d$ with finite length $\ell:=\HH^1(S)$.
Using \cite[Theorem~4.4]{albott} we obtain 
a closed Lipschitz path $\gamma:[0,1]\to S$ 
which parametrizes~$S$, and more precisely
\begin{itemize}
\item
$\gamma$ has multiplicity $2$ at $\HH^1$-a.e.~$x\in S$, 
that is, $\#(\gamma^{-1}(x))=2$; 
\item
$|\dot\gamma(t)|=2\ell$ for a.e.~$t\in [0,1]$. 
\end{itemize}
We then define the Sobolev space $H^1(S)$ 
as the space of all continuous functions $u:S\to\R$
such that the composition $u\circ\gamma$ belongs 
to~$H^1(I)$, where $I:=(0,1)$.%
\footnote{
It can be proved that given a function $u$ in $H^1(S)$ 
and a Lipschitz function $\varphi:[0,1]\to S$
such that $|\dot\varphi(t)|\ge\delta$ for 
some given positive~$\delta$ and for a.e.~$t$, then 
$u\circ\varphi$ belongs to $H^1(I)$. 
In particular the definition of $H^1(S)$ does not depend
on the choice of the parametrization~$\gamma$.
}

Note that for a.e.~$t\in [0,1]$ the vector $\dot\gamma(t)$ spans 
$\tau(x)$, that is, the tangent line to $S$ at $x:=\gamma(t)$.
Therefore a function $u\in H^1(S)$ 
is differentiable along $\tau(x)$ for $\HH^1$-a.e.~$x\in S$ 
and the tangential derivative satisfies
\[
|\nablatau u(x)|=\frac{1}{2\ell} \big| (u\circ\gamma)'(t) \big|
\, .
\]
Thus the area formula for Lipschitz maps 
yields
\[
\big\|u\big\|_2^2
:=\int_S |u|^2 \, d\HH^1
=\ell\int_0^1 \big| (u\circ\gamma)'(t) \big|^2 \, dt
\, .
\]
and
\[
\big\|\nablatau u\big\|_2^2
:=\int_S |\nablatau u|^2 \, d\HH^1
=\frac{1}{4\ell} \int_0^1 \big| (u\circ\gamma)'(t) \big|^2 \, dt
\, .
\]
We endow $H^1(S)$ with the Hilbert norm
\[
\big\|u\big\|_{H^1(S)}^2 := \big\|u\big\|_2^2 + \big\|\nablatau u\big\|_2^2
\, .
\]

\subsection{The Sobolev space $\boldsymbol{H^1_0(\Omega)\cap H^1(S)}$}
%\label{HSO}
%
Let $S$ be a connected compact set in $\Omegabar$
with finite length $\ell:=\HH^1(S)$.
Since $S$ is rectifiable, we can find a 
strictly positive (Borel) density function 
$m:S\to\R$ such that the trace operator
\[
T_S:H^1(\R^2) \to L^2(m\,\HH^1\trace S) 
\]
is well defined and bounded.%
\footnote{
Take for instance countably many 
compact curves $S_n$ of class $C^1$
in $\R^2$ that cover $\HH^1$-almost
all of $S$, let $m_n$ be the norm of the 
trace operators $T_n:H^1(\R^2) \to L^2(S_n)$, 
and set 
\[
m(x):=\sum_n \frac{1}{2^n m_n^2} 1_{S_n}(x)
\, .
\]}

Then we define $H^1_0(\Omega)\cap H^1(S)$
as the space of all $u\in H^1_0(\Omega)$
such that $T_Su$ 
agrees a.e.~with a function in $H^1(S)$.
In the following we tacitly assume that 
$u$ agrees on $S$ with 
the representative of the trace~$T_Su$ in $H^1(S)$, 
and in particular $u$ is continuous on~$S$.
We endow $H^1_0(\Omega)\cap H^1(S)$
with the Hilbert norm
\[
\big\| u \big\|_{H^1_0(\Omega)\cap H^1(S)}^2 
:= \big\| \nabla u \big\|_{L^2(\Omega)}^2 
+ \big\| \nablatau u \big\|_{L^2(S)}^2
\,.
\]
(Completeness can be proved using the continuity
of the trace operator~$T_S$.)
Using the fact that functions of class $H^{1/2}$ 
on intervals do not admit discontinuities of jump type, 
one can prove that for every $u$ in 
$H^1_0(\Omega)\cap H^1(S)$, there holds 
\[
u(x)=0
\quad\text{for every $x\in S\cap\bd\Omega$.}
\]

\begin{proposition}
\label{emumin}
Let $\mu$ be a measure of the form $\mu=\theta\,\HH^1\trace S$ 
where $S$ is a compact connected set with finite 
length in $\Omegabar$ and the multiplicity
$\theta$ is larger than some positive constant.
If $f$ belongs to $L^p(\Omega)$ for some 
$p>1$ then $\EE_f(\mu)=\EE_f^*(\mu)$ is finite and
\begin{equation}
\label{emu2}
\EE_f(\mu)
=\EE_f^*(\mu) 
= \min\Big\{
	E^*_f(\mu,u) \colon u\in H^1_0(\Omega)\cap H^1(S)
\Big\}
\, . 
\end{equation}
\end{proposition}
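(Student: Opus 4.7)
The plan is to establish three statements in order: (i) the infimum in \eqref{emu2} is attained and finite; (ii) it equals $\EE_f^*(\mu)$; (iii) it equals $\EE_f(\mu)$.

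For (i) I apply the direct method to $E_f^*(\mu,\cdot)$ on the Hilbert space $H := H^1_0(\Omega)\cap H^1(S)$. Since $\theta$ is bounded below by a positive constant, the quadratic part of $E_f^*(\mu,u)$ dominates a constant multiple of $\|u\|_{H^1_0(\Omega)\cap H^1(S)}^2$; combined with the Poincar\'e inequality on $\Omega$ this gives coercivity, and convexity plus continuity yield weak lower semicontinuity. In dimension $2$, the assumption $p>1$ gives the Sobolev embedding $H^1_0(\Omega)\hookrightarrow L^{p'}(\Omega)$, so the linear term $u\mapsto\int_\Omega u\,df$ is continuous on $H$ with norm bounded by a multiple of $\|f\|_{L^p}$. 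Strict convexity produces a unique minimizer $\bar u\in H$, and $\tilde E := E_f^*(\mu,\bar u)$ is finite.

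For (ii), the inclusion $C^\infty_c(\Omega)\subset H$ (for any smooth $u$ the composition $u\circ\gamma$ is Lipschitz, hence in $H^1(I)$) yields $\tilde E\le\EE_f^*(\mu)$; for the reverse inequality, approximate $\bar u$ in the $H$-norm by functions in $C^\infty_c(\Omega)$ via standard truncation and mollification, using that $\bar u$ vanishes on $S\cap\bd\Omega$ (as noted in the paper, via the no-jump property of $H^{1/2}$-traces on intervals). For the easy half of (iii), the pointwise bound $|\nablatau u|^2\le|\nabla u|^2$ on $S$ gives $\EE_f^*(\mu)\le\EE_f(\mu)$.

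The crux is the remaining inequality $\EE_f(\mu)\le\tilde E$. One must approximate $\bar u$ by $u_n\in C^\infty_c(\Omega)$ so that $u_n\to\bar u$ in $H^1_0(\Omega)$ \emph{and} the \emph{full} gradient of $u_n$ on $S$ converges to the \emph{tangential} gradient of $\bar u$ in $L^2(\theta\,d\HH^1)$; equivalently, the normal component of $\nabla u_n$ along $S$ must vanish in the limit. The construction: by rectifiability of $S$, cover $\HH^1$-a.e.\ point of $S$ by countably many disjoint compact arcs $K_j$ of $C^1$ curves; in a tubular neighborhood of each $K_j$ of width $\eps$, straighten $K_j$ by a $C^1$ diffeomorphism and replace $\bar u$ by the extension of its trace on $K_j$ that is constant in the normal direction, glued back to $\bar u$ through a smooth cutoff. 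The resulting function has zero normal derivative on $K_j$ and tangential gradient there equal to $\nablatau\bar u|_{K_j}$, and differs from $\bar u$ only on a set of area $O(\eps)$; a final mollification gives the required $u_n$. The main obstacle is uniformity across the countably many arcs $K_j$ (requiring carefully chosen widths $\eps_j\to 0$) and the treatment of the self-intersections of $S$, where the parametrization $\gamma$ has multiplicity $2$ and the tangent lines of the two branches generically differ, so that the normal directions on the two branches must be handled separately while keeping both the bulk error in $H^1_0(\Omega)$ and the surface error weighted by the possibly unbounded $\theta$ simultaneously under control.
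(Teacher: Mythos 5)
Your overall architecture is right and the first two thirds match the paper: existence of the minimizer $\bar u$ by coercivity (using $f\in L^p$, $p>1$, hence $f\in H^{-1}(\Omega)$) and strict convexity, the trivial inequality $\EE_f^*(\mu)\le\EE_f(\mu)$, and the identification of the minimum over $H^1_0(\Omega)\cap H^1(S)$ with $\EE_f^*(\mu)$ via density of $C^\infty_c(\Omega)$. (On that last point you dismiss the density as ``standard truncation and mollification''; the paper treats it as the delicate step and goes through a four-stage chain $C^\infty_c\to\Lip_c\to\Lip_0\to X\to H^1_0(\Omega)\cap H^1(S)$, but your level of detail here is comparable to the paper's own.)

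The genuine gap is in the crux inequality $\EE_f(\mu)\le\EE_f^*(\mu)$. You attack it by modifying the \emph{minimizer} $\bar u\in H^1$ near finitely many $C^1$ arcs $K_1,\dots,K_N$ covering $S$ up to small $\mu$-measure, gluing normal-constant extensions of the trace with cutoffs of width $\eps$. The problem is the exceptional set: $S\setminus(K_1\cup\dots\cup K_N)$ and the transition annuli have small but \emph{positive} $\mu$-measure, so you need a pointwise (or at least $L^2(\theta\,d\HH^1)$) bound on $|\nabla u_n|$ there that is uniform in $n$. Your construction does not provide one: the cutoff contributes a term of size $\eps^{-1}|\bar u-E\bar u|$, and since $\bar u$ is only $H^1$ this is controlled in $L^2(dx)$ by a normal Poincar\'e inequality but is not pointwise bounded, and its restriction to the uncovered part of $S$ (where $\theta$ may moreover be unbounded) is not controlled at all. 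This is exactly the difficulty the paper's proof is engineered to avoid: instead of approximating $\bar u$, it proves $\EE_f(\mu)\le\EE_f^*(\mu)$ at the level of infima over $C^\infty_c(\Omega)$, by composing each \emph{smooth} $u$ with a globally smooth ``squashing'' map $\phi$ (Lemma~\ref{effe3}) satisfying $|\nabla\phi|\le 10$ everywhere and $\nabla\phi\approx P(x)$ on a compact $K\subset S$ with $\mu(S\setminus K)\le\eps$; then $|\nabla(u\circ\phi)|\le 10\,\|\nabla u\|_\infty$ gives the uniform bound on the exceptional set for free, and the conclusion for the minimizer follows afterwards from the density result. You explicitly name the uniformity issue as ``the main obstacle'' but do not resolve it, so as written the argument does not close; to repair it along the paper's lines you should perform the normal-killing on smooth test functions (where $\Lip(u)$ and $\Lip(\nabla u)$ are finite) rather than on $\bar u$.
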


\section{Proofs of the results in Section~\ref{sform}}
\label{sproofs}
Through this section, given a matrix $M$ we denote by
$|M|$ the operator norm.

\begin{proposition}
\label{usc}
The functional $\EE_f$ defined in \eqref{emu}
is weakly* upper semicontinuous on $\M^+(\Omegabar)$.
\end{proposition}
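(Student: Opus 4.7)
The plan is to exploit the fact that $\EE_f(\mu)$ is defined as an infimum over a fixed family of test functions, and then to use the standard principle that a pointwise infimum of weakly* continuous functionals is weakly* upper semicontinuous.

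First I would fix $u\in C^\infty_c(\Omega)$ and analyze the dependence of $E_f(\mu,u)$ on $\mu$. The first term $\tfrac12\int_\Omega|\nabla u|^2\,dx$ and the third term $\int_\Omega u\,df$ do not involve $\mu$ at all, so they contribute a constant (in $\mu$). The only $\mu$-dependent piece is $\tfrac{m}{2}\int_\Omega|\nabla u|^2\,d\mu$. Since $u\in C^\infty_c(\Omega)$, the integrand $|\nabla u|^2$ is continuous on $\R^2$ with compact support contained in $\Omega$, and therefore its restriction to $\Omegabar$ is an element of $C(\Omegabar)$. By the very definition of weak* convergence in $\M^+(\Omegabar)$, we get that whenever $\mu_n \overset{*}{\rightharpoonup} \mu$,
\[
\int_\Omega |\nabla u|^2\, d\mu_n \longrightarrow \int_\Omega |\nabla u|^2\, d\mu
\, .
\]
In particular, $\mu\mapsto E_f(\mu,u)$ is weakly* continuous on $\M^+(\Omegabar)$.

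Then I would conclude by the standard argument. Given any sequence $\mu_n \overset{*}{\rightharpoonup} \mu$ in $\M^+(\Omegabar)$ and any fixed $u\in C^\infty_c(\Omega)$, the estimate $\EE_f(\mu_n)\le E_f(\mu_n,u)$ combined with the continuity established above yields
\[
\limsup_{n\to\infty} \EE_f(\mu_n)
\le \lim_{n\to\infty} E_f(\mu_n,u)
= E_f(\mu,u)
\, .
\]
Taking the infimum over $u\in C^\infty_c(\Omega)$ on the right-hand side gives $\limsup_n \EE_f(\mu_n)\le \EE_f(\mu)$, which is exactly the weak* upper semicontinuity.

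There is no real obstacle here: the compact support of the test functions is precisely what makes the $\mu$-integrand a legitimate test for weak* convergence, and no extra approximation or truncation is needed. The only mild subtlety is guaranteeing that $|\nabla u|^2$ can be regarded as a continuous function on the compact set $\Omegabar$, which is immediate from the compact-support condition $u\in C^\infty_c(\Omega)$.
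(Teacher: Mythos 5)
Your proof is correct and is exactly the argument the paper gives: $E_f(\mu,u)$ is weakly* continuous in $\mu$ for each fixed $u\in C^\infty_c(\Omega)$ because $|\nabla u|^2$ extends to a continuous function on $\Omegabar$, and an infimum of weakly* continuous functionals is weakly* upper semicontinuous. No discrepancy with the paper's proof.
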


\begin{proof}
Just notice that $\EE_f(\mu)$ is defined as the infimum 
of $E_f(\mu,u)$ over all $u$ in $C^\infty_c(\Omega)$
and $E_f(\mu,u)$ is clearly weakly* continuous in $\mu$
for every such~$u$, cf.~\eqref{emuu}.
\end{proof}

\begin{proof}[Proof of Theorem \ref{exthml}]
This statement follows from Proposition~\ref{usc} 
and the weak* compactness of the class $\ML$ 
(an immediate consequence of its definition).
\end{proof}

To prove Proposition~\ref{closure} we need the following results,
which we state in $\R^d$ even if we need only the case~$d=2$.

%\begin{proposition}[Golab Theorem]
%\label{golab}
%Let $(S_n)$ be a sequence of compact connected sets 
%in $\R^d$ which converge to a set $S$ in the Hausdorff distance, 
%and such that $\HH^1(S_n) \le L$ for some finite $L$, 
%and assume that the measures $\mu_n:=\HH^1\trace S_n$ 
%converge in the weak* sense to some positive measure $\mu$.
%Then $\HH^1\trace S\le\mu$.
%\end{proposition}

\begin{proposition}\label{similgolab}
Let $(S_n)$ be a sequence of compact connected sets 
in $\R^d$ which converge to a set $S$ in the Hausdorff distance,
let $(\mu_n)$ be a sequence of positive finite measures on $\R^d$ 
which converge to a measure $\mu$ in the weak* sense, 
and assume that
\begin{itemize}
\item
$\mu_n$ is supported on $S_n$ and $\HH^1\trace S_n \le \mu_n$;
\item
$\HH^1(S_n)\le L$ for some finite constant $L$.
\end{itemize}
Then
\begin{itemize}
\item
$\mu$ is supported on $S$ and $\HH^1\trace S \le \mu$;
\item
$\HH^1(S)\le \liminf \HH^1(S_n) \le L$.
\end{itemize}
\end{proposition}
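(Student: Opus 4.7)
My approach is to decompose the proposition into its three assertions and tackle them in increasing order of difficulty.

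The support statement and the length bound are quick. For $\textrm{spt}(\mu)\subset S$: any open $U\subset\R^d$ with $\overline U\cap S=\emptyset$ satisfies $S_n\cap U=\emptyset$ for $n$ large (by Hausdorff convergence), so $\mu_n(U)=0$ eventually and weak* lower semicontinuity on open sets gives $\mu(U)\leq\liminf_n\mu_n(U)=0$; exhausting $\R^d\setminus S$ by such $U$ yields the claim. The length estimate $\HH^1(S)\leq\liminf\HH^1(S_n)\leq L$ is simply the classical \Golab semicontinuity theorem for compact connected sets converging in Hausdorff distance.

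The heart of the proof is the measure inequality $\HH^1\trace S\leq\mu$. The plan is to combine a local \Golab-type inequality---namely
\[
\HH^1(S\cap U)\leq\liminf_n\HH^1(S_n\cap U)\quad\text{for every open }U\subset\R^d
\]
---with the hypothesis $\HH^1\trace S_n\leq\mu_n$ and weak* upper semicontinuity on closed sets. Specializing to $U=B(x,r+\eps)$ for $x\in\R^d$ and $r,\eps>0$ I obtain
\[
\HH^1(S\cap\overline{B(x,r)})
\leq\HH^1(S\cap B(x,r+\eps))
\leq\liminf_n\mu_n(B(x,r+\eps))
\leq\mu(\overline{B(x,r+\eps)}),
\]
and letting $\eps\downarrow 0$ (by outer continuity of the finite Radon measure $\mu$ on decreasing closed balls) yields $\HH^1(S\cap\overline{B(x,r)})\leq\mu(\overline{B(x,r)})$ for every $x$ and $r>0$. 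A standard Besicovitch covering argument then upgrades this pointwise bound on closed balls to the full measure inequality $\HH^1\trace S\leq\mu$ (one checks that the Radon--Nikodym density of $\HH^1\trace S$ with respect to $\mu$ is at most $1$ $\mu$-a.e. and that the mutually singular part vanishes).

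The main obstacle is the local \Golab inequality, since the intersections $S_n\cap U$ need not be connected and the classical statement is global. My plan is a parametric argument: use \cite[Theorem~4.4]{albott} to choose constant-speed multiplicity-$2$ Lipschitz parametrizations $\gamma_n\colon[0,2L]\to S_n$; these are equi-Lipschitz and so, after passing to a subsequence, converge uniformly by Ascoli--Arzel\`a to some $\gamma\colon[0,2L]\to S$, necessarily surjective onto $S$ by the Hausdorff convergence of the images. For $U$ open, uniform convergence and openness of $U$ give $\gamma^{-1}(U)\subset\liminf_n\gamma_n^{-1}(U)$, so Fatou's lemma combined with the area formula applied to $\gamma_n$ and $\gamma$ produces the local inequality---provided one knows that $\gamma$ inherits multiplicity at least $2$ from the $\gamma_n$. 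This multiplicity inheritance is the truly delicate point; it is essentially the content of the classical proof of \Golab's theorem, and it adapts to the local setting after extracting a further subsequence along which $\HH^1(S_n)$ converges, so as to avoid a spurious normalization constant in the area-formula comparison.
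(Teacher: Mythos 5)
Your proof is correct and rests on the same key ingredients as the paper's: the parametrization of each $S_n$ by a closed, equi-Lipschitz, multiplicity-two path from \cite[Theorem~4.4]{albott}, uniform convergence of these paths after extraction, and the inheritance of multiplicity at least two by the limit path --- exactly the delicate point you flag, which the paper settles by invoking the degree-zero property and its stability under uniform convergence (\cite[Proposition~4.3]{albott}). The only organizational difference lies in the globalization: the paper tests $\mu_n\ge\HH^1\trace S_n$ against nonnegative $\varphi\in C_c(\R^d)$ and uses lower semicontinuity of $\gamma\mapsto\int_0^1\varphi(\gamma)\,|\dot\gamma|\,dt$ under uniform convergence of equi-Lipschitz paths, obtaining $\mu\ge\HH^1\trace S$ in one stroke, whereas you first establish a local \Golab{} inequality on open sets and then pass through closed balls and a Besicovitch covering argument --- both routes work, the paper's simply avoids the covering theorem.
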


\begin{proof}
The fact that $\mu$ is supported on $S$ follows easily 
from the weak* convergence of $\mu_n$ to $\mu$ and 
the convergence of $S_n$ to $S$ in the Hausdorff distance. 

The inequality $\HH^1(S)\le \liminf \HH^1(S_n)$ is 
\Golab's semicontinuity theorem (see \cite[Section~3]{golab}, 
or \cite[Theorem~2.9]{albott}). 

The inequality $\HH^1\trace S \le \mu$ can be viewed as a 
localized version of \Golab's theorem, 
and the proof is slightly more complicated.
Using \cite[Theorem~4.4]{albott}, we obtain
that each $S_n$ can be parametrized 
by a closed path $\gamma_n:[0,1]\to S_n$ such that
\begin{itemize}
\item
$\gamma_n$ has multiplicity $2$ at $\HH^1$-a.e.~$x\in S_n$, 
that is, $\#(\gamma_n^{-1}(x))=2$;
\item
$\gamma_n$ has degree $0$ at $\HH^1$-a.e.~$x\in S_n$
(for a precise definition see \cite[{\S 4.1}]{albott});
\item
each $\gamma_n$ has Lipschitz constant $\Lip(\gamma_n)\le 2L$.
\end{itemize}
Passing to a subsequence we can assume that the paths
$\gamma_n$ converge uniformly to some $\gamma:[0,1]\to S$ 
with $\Lip(\gamma)\le 2L$, and one easily checks that 
$\gamma$ parametrizes~$S$. Moreover 
\cite[Proposition~4.3]{albott} shows that $\gamma$ has 
degree $0$ and in particular has multiplicity at least 2 
at $\HH^1$-a.e.~$x\in S$.

Therefore, for every \emph{positive} test function 
$\varphi\in C_c(\R^d)$ 
there holds
\begin{align*}
\int_{\R^d} \varphi\, d\mu
=\lim_{n\to\infty} \int_{\R^d} \varphi\, d\mu_n
& \ge\liminf_{n\to\infty} \int_{S_n} \varphi\, d\HH^1 \\
& =\liminf_{n\to\infty} \frac{1}{2} \int_0^1 \varphi(\gamma_n)\, |\dot\gamma_n|\, dt \\
& \ge \frac{1}{2} \int_0^1 \varphi(\gamma)\, |\dot\gamma|\, dt
  \ge \int_S \varphi\, d\HH^1 
  \, .
\end{align*}
The first inequality follows from the assumption $\mu_n\ge \HH^1\trace S_n$
and the fact that $\varphi$ is positive, 
the second equality and the third inequality follow from the area formula
for Lipschitz maps, 
and finally the second inequality follows by a standard semicontinuity 
argument.

We have thus proved that
\[
\int_{\R^d}\varphi\,d\mu\ge\int_S \varphi\,d\HH^1
\]
for every test function $\varphi\ge0$, which yields the desired inequality
$\mu\ge\HH^1\trace S$.
\end{proof}

\begin{lemma}\label{approx}
Let $S$ be a connected compact set with finite length in $\R^d$,
and let $\mu$ be a positive finite measure supported on $S$
such that $\mu\ge\HH^1\trace S$.
Then there exists a sequence of connected compacts sets $S_n$ in $\R^d$ 
such that
\begin{itemize}
\item
$\HH^1(S_n)\le\mu(\R^d)$ for every $n$, 
\item
the sets $S_n$ converge to $S$ in the Hausdorff distance;
\item
the measures $\HH^1\trace S_n$ converge weakly* to $\mu$.
\end{itemize}
\end{lemma}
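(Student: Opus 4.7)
The plan is to write $\mu = \HH^1\trace S + \nu$, with $\nu$ a nonnegative finite measure on $\R^d$ supported on $S$, and to build each $S_n$ by attaching to $S$ a finite family of small connected ``hairs'' whose total length equals $\nu(\R^d)$. The idea is that the length measures of the hairs will approximate $\nu$ by a Riemann-sum argument, while the construction automatically gives both the length bound $\HH^1(S_n) \le \mu(\R^d)$ and Hausdorff convergence $S_n \to S$.

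Concretely, for each $n$, exploiting compactness of $S$, I would fix a finite Borel partition $S = \bigsqcup_{i=1}^{k_n} E_i^{(n)}$ with $\mathrm{diam}(E_i^{(n)}) \le 1/n$, pick a point $x_i^{(n)} \in E_i^{(n)}$, and set $c_i^{(n)} := \nu(E_i^{(n)})$. At each $x_i^{(n)}$ I would attach a connected compact set $H_i^{(n)} \subset B(x_i^{(n)},1/n)$ containing $x_i^{(n)}$, with $\HH^1(H_i^{(n)}) = c_i^{(n)}$; such a set exists for any $c_i^{(n)} \ge 0$ (e.g.~a finite union of segments emanating from $x_i^{(n)}$ of appropriate lengths). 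The set $S_n := S \cup \bigcup_i H_i^{(n)}$ is then compact and connected (each hair is attached to $S$), satisfies
\[
\HH^1(S_n) \le \HH^1(S) + \sum_{i} c_i^{(n)} = \HH^1(S) + \nu(\R^d) = \mu(\R^d),
\]
and the inclusions $S \subset S_n \subset \{x : \dist(x,S) \le 1/n\}$ yield $S_n \to S$ in the Hausdorff distance.

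For the weak* convergence $\HH^1\trace S_n \to \mu$, after subtracting the common term $\HH^1\trace S$ from both sides it suffices to prove $\sum_i \HH^1\trace H_i^{(n)} \to \nu$ weakly*. Given $\varphi \in C_c(\R^d)$ with modulus of continuity $\omega_\varphi$, the inclusion $H_i^{(n)} \subset B(x_i^{(n)},1/n)$ yields
\[
\Bigl|\sum_i \int_{H_i^{(n)}} \varphi\, d\HH^1 - \sum_i \varphi(x_i^{(n)})\, \nu(E_i^{(n)})\Bigr| \le \omega_\varphi(1/n)\, \nu(\R^d) \to 0,
\]
while the Riemann-type sum $\sum_i \varphi(x_i^{(n)})\, \nu(E_i^{(n)})$ converges to $\int \varphi\, d\nu$ by standard arguments, since the mesh of the partition shrinks and $\nu$ is a finite Borel measure on the compact set $S$. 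The only mildly delicate point is the construction of the hairs $H_i^{(n)}$ when $c_i^{(n)}$ is not small compared with $1/n$: a single segment cannot realize the prescribed length inside $B(x_i^{(n)},1/n)$, but $\lceil n\,c_i^{(n)}\rceil$ segments emanating from $x_i^{(n)}$ in generic directions, with suitably chosen individual lengths, provide a connected compact subset of the ball of total length exactly $c_i^{(n)}$; the genericity of the directions (together with $\HH^1(S) < \infty$, which makes $S$ ``thin'' inside the ball) allows us to arrange that distinct hairs meet neither each other nor $S$ outside the attaching points, which is what is needed for the additivity used in the length bound.
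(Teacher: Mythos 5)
Your proof is correct and follows essentially the same strategy as the paper's: decompose $\mu=\HH^1\trace S+\nu$, discretize $\nu$ by atoms located at points of $S$, and realize each atom as the length of small segments attached to $S$ in generic directions (generic in the sense that only countably many directions can coincide with the approximate tangent of the rectifiable set $S$ on a set of positive length, so the attached segments meet $S$ and each other in $\HH^1$-null sets). The only cosmetic difference is that the paper fixes a single direction $e$ and splits large atoms among several nearby points so that each attachment is one short parallel segment, whereas you keep the atom at one point and attach a bundle of several short segments there; both devices serve the same purpose of keeping the hairs inside shrinking balls.
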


\begin{proof}
Choose a unit vector $e\in\R^d$ which is not in the 
approximate tangent line to $S$ at $x$ for $\HH^1$-a.e.~$x\in S$.
Consider then the measure $\lambda:=\mu-\HH^1\trace S$.
Since $\lambda$ is positive and supported on $S$, 
it can be approximated by a sequence of positive 
discrete measures
\[
\lambda_n := \sum_j a_{nj} \delta_{x_{nj}}
\]
where
\begin{itemize}
\item
the points $x_{nj}$ belong to $S$,
\item
the coefficients $a_{nj}$ converge 
uniformly to $0$ as $n\to\infty$,
\item
$\lambda_n(\R^d) \le \lambda(\R^d)$ for
every $n$, that is, $\sum_j a_{nj} \le \mu(\R^d)-\HH^1(S)$.
\end{itemize}
Thanks to the choice of $e$ we can further require that
\begin{itemize}
\item
$x_{nj}-x_{ni}$ is not parallel to $e$ for every $n$ and every $i\ne j$.
\end{itemize}
Finally we set
\[
S_n := S\cup \Big( \bigcup_j I_{nj} \Big)
\]
where $I_{nj}$ is the closed segment with endpoints $x_{nj}$ and $x_{nj}+a_{nj}e$.
By the choice of the points $x_{ni}$ we have that the segments $I_{nj}$ 
are pairwise disjoint (for fixed $n$) and have negligible intersection with $S$.
Now it is easy to check that the sets $S_n$ have the required properties.
\end{proof}

\begin{proof}[Proof of Proposition~\ref{closure}]
We first prove that every measure $\mu\in\ML$
satisfies properties (a)--(c) in Proposition~\ref{closure}.
Take indeed a sequence of sets $S_n\in\AL$ such that
$\HH^1\trace S_n$ weakly* converge to $\mu$.
Possibly passing to a subsequence we can assume
that the sets $S_n$ converge in Hausdorff distance 
to some compact connected set $S$ contained 
in $\Omegabar$.
Then Proposition~\ref{similgolab} implies that $S$ 
is the support of $\mu$ and $\mu$ belongs to $\ML$. 

The converse implication, namely that every positive 
measure that satisfies properties (a)-(c) 
belongs to $\ML$, follows from Lemma~\ref{approx}.
\end{proof}

The proof of Theorem \ref{equivalence} 
is split in two parts (Propositions~\ref{killsing} 
and~\ref{killnorm}),
the proofs of which require several lemmas.
Some of these lemmas are stated 
in general dimension~$d$ even if we only need the case~$d=2$.

\begin{lemma}
\label{effe1}
Let $K\subset\R$ be a compact set with $|K|=0$. 
For every $\eps>0$ there exists a function $f:\R\to\R$ 
of class $C^\infty(\R)$
such that
\begin{itemize}
\item
$|f(x)-x|\le\eps$ for all $x\in\R$;
\item 
$f'=0$ in a neighborhood of $K$; 
\item
$0\le f'(x)\le 1$ for all $x\in\R$;
\item 
the open set $A:=\{x\colon f'(x) \ne 1\}$ 
satisfies $|A| \le \eps$.
\end{itemize}
\end{lemma}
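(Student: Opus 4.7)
The plan is to construct $f$ by prescribing $f'$ as a smooth approximation of $\mathbf{1}_{\R\setminus K}$ that vanishes near $K$, and then integrating. The zero-measure condition on $K$ enters exactly to ensure that the ``defect'' region where $f'\neq 1$ can be made to have small measure.

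\textbf{Step 1: Enclose $K$ in a small-measure open set.} For $t>0$ let $K_t:=\{x\in\R\colon\dist(x,K)<t\}$. Since $K$ is compact with $|K|=0$ and $K=\bigcap_{t>0}K_t$, monotone convergence gives $|K_t|\to 0$ as $t\downarrow0$. Choose $t_0>0$ so small that $|K_{2t_0}|\le\eps$ (and in particular $K_{2t_0}$ is bounded, hence has finite measure).

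\textbf{Step 2: Build the smooth profile $g$.} Let $\rho\in C_c^\infty(\R)$ be a nonnegative mollifier with $\int\rho=1$ and $\mathrm{spt}(\rho)\subset[-t_0/2,t_0/2]$, and set $g:=\mathbf{1}_{\R\setminus K_{t_0}}*\rho$. Then $g\in C^\infty(\R)$ with $0\le g\le 1$. Because convolution with $\rho$ shrinks to the $(t_0/2)$-neighborhood, one checks that $g(x)=0$ whenever $\dist(x,K)<t_0/2$ and $g(x)=1$ whenever $\dist(x,K)>3t_0/2$. Hence
\[
\{g\neq 1\}\subset K_{3t_0/2}\subset K_{2t_0},
\qquad \text{and $g\equiv 0$ on the open neighborhood }K_{t_0/2}\text{ of }K.
\]

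\textbf{Step 3: Integrate to get $f$.} Since $1-g$ is smooth, nonnegative, and supported in the bounded set $K_{2t_0}$, define
\[
f(x):=x-\int_{-\infty}^x\bigl(1-g(t)\bigr)\,dt,\qquad x\in\R.
\]
This is well defined and smooth, with $f'(x)=g(x)$.

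\textbf{Step 4: Verify the four properties.} By construction $0\le f'=g\le 1$ and $f'=g$ vanishes on the neighborhood $K_{t_0/2}$ of $K$. The open set $A=\{f'\neq 1\}=\{g\neq 1\}$ is contained in $K_{2t_0}$, so $|A|\le\eps$. Finally, since $1-g\ge 0$ and $\{1-g\neq 0\}\subset K_{2t_0}$,
\[
0\le x-f(x)=\int_{-\infty}^x\bigl(1-g(t)\bigr)\,dt\le\int_\R\bigl(1-g(t)\bigr)\,dt\le |K_{2t_0}|\le\eps,
\]
which gives $|f(x)-x|\le\eps$ for every $x\in\R$.

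There is no real obstacle: the only content of the lemma is the standard fact that a compact null set can be enclosed in an open set of arbitrarily small measure, and the rest is a routine mollification plus integration. The slight care is in choosing the mollifier radius ($t_0/2$) small enough that $g$ is forced to be identically $0$ on a genuine neighborhood of $K$, which is why we work with $K_{t_0}$ rather than with $K$ itself.
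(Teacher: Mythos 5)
Your proof is correct and follows essentially the same route as the paper: both enclose $K$ in an open neighborhood of measure at most $\eps$ (using compactness and $|K|=0$), mollify an indicator function to get a smooth profile for $f'$ that vanishes near $K$ and equals $1$ outside a slightly larger neighborhood, and then integrate, bounding $|f(x)-x|$ by the $L^1$-norm of $1-f'$. The only cosmetic difference is that you mollify $\mathbf{1}_{\R\setminus K_{t_0}}$ directly while the paper mollifies $\mathbf{1}_{K_{2\delta}}$ and takes the complement, which is the same construction.
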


\begin{proof}
For every $\delta>0$ let
\begin{itemize}
\item
$K_\delta$ be the open $\delta$-neighborhood of $K$; 
\item
$\rho_\delta$ be a smooth regularizing kernel 
with support contained in $[-\delta,\delta]$, 
\item
$g_\delta:= \rho_\delta * 1_{K_{2\delta}}$ and $f_\delta$ 
be a primitive of $1-g_\delta$ such that $f_\delta(0)=0$.
\end{itemize}
One easily checks that $g_\delta$ and $f_\delta$ 
are smooth functions and satisfy the following properties:
\begin{itemize}
\item
$0\le g_\delta(x) \le 1$ for every $x\in\R$, which implies $0 \le f_\delta'(x) \le 1$;
\item
if $x\in K_\delta$ then $[x-\delta, x+\delta]\subset K_{2\delta}$, hence
$g_\delta(x)=1$, and $f_\delta'(x)=0$;
\item
if $x\notin K_{3\delta}$ then $[x-\delta, x+\delta]\cap K_{2\delta}=\varnothing$, 
hence $g_\delta(x)=0$ and $f_\delta'(x)=1$;
\item
for every $x\in\R$, $|f_\delta(x)-x| \le \|g_\delta\|_1=|K_{2\delta}|$.
\end{itemize}
Notice now that since $K$ is compact then $|K_\delta|$ converges 
to $|K|=0$ as $\delta\to 0$, and therefore we can find $\bar\delta$ 
such that $|K_{2\bar\delta}| \le |K_{3\bar\delta}|\le\eps$.
We conclude the proof by setting $f:=f_{\bar\delta}$.
\end{proof}

\begin{lemma}
\label{effe2} 
Let $K$ be a compact set in $\R^d$, $d\ge 2$, with $\HH^1(K)=0$ 
For every $\eps>0$ there exist a map $\phi:\R^d\rightarrow\R^d$ 
of class $C^\infty$ and an open set $A\subset\R^d$ such that
\begin{itemize}
\item[\rm(i)] 
$|\phi(x)-x|\le\eps$ for all $x\in\R^d$;
\item[\rm(ii)]
$\nabla\phi=0$ in a neighborhood of $K$;
\item[\rm(iii)]
$|\nabla\phi(x)|\le 1$ for all $x\in\R^d$.
\end{itemize}
Moreover, having fixed $r>0$, we can further require that 
$\nabla\phi(x)=I$, where $I$ is the $d\times d$-identity matrix, 
out of an open set $A$ with $|A\cap(-r, r)^d|\le\eps$.
\end{lemma}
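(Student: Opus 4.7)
The plan is to construct $\phi$ as a diagonal product of one-dimensional maps supplied by Lemma~\ref{effe1}, one for each coordinate direction. Let $\pi_i\colon\R^d\to\R$ denote the projection onto the $i$-th coordinate and set $K_i:=\pi_i(K)$. Each $K_i$ is compact because $K$ is, and since $\pi_i$ is $1$-Lipschitz we have $\HH^1(K_i)\le\HH^1(K)=0$; recalling that $\HH^1$ coincides with Lebesgue measure on $\R$, this gives $|K_i|=0$, so Lemma~\ref{effe1} applies to each $K_i$.

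For a parameter $\eps'>0$ to be chosen below, let $f_i\in C^\infty(\R)$ be the function produced by Lemma~\ref{effe1} applied to $K_i$ with tolerance $\eps'$; let $V_i\subset\R$ be an open neighborhood of $K_i$ on which $f_i'$ vanishes, and set $A_i:=\{t\in\R:f_i'(t)\ne1\}$, so that $|A_i|\le\eps'$. Define
\[
\phi(x_1,\dots,x_d):=\bigl(f_1(x_1),\dots,f_d(x_d)\bigr).
\]
Then $\phi$ is smooth, $\nabla\phi(x)$ is the diagonal matrix with entries $f_i'(x_i)\in[0,1]$, so its operator norm satisfies $|\nabla\phi(x)|\le1$, giving (iii). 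The product $V_1\times\cdots\times V_d$ is an open neighborhood of $K\subset K_1\times\cdots\times K_d$ on which $\nabla\phi$ vanishes, giving (ii). Finally, from $|f_i(x_i)-x_i|\le\eps'$ componentwise we get $|\phi(x)-x|\le\sqrt{d}\,\eps'$, which will give (i) once $\eps'$ is fixed.

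For the last assertion, the exceptional set factors as
\[
A:=\{x\in\R^d:\nabla\phi(x)\ne I\}=\bigcup_{i=1}^d\bigl(\R^{i-1}\times A_i\times\R^{d-i}\bigr),
\]
which is open, with
\[
|A\cap(-r,r)^d|\le\sum_{i=1}^d(2r)^{d-1}|A_i\cap(-r,r)|\le d(2r)^{d-1}\eps'.
\]
Choosing $\eps':=\min\{\eps/\sqrt{d},\ \eps/(d(2r)^{d-1})\}$ makes both $|\phi(x)-x|$ and $|A\cap(-r,r)^d|$ at most $\eps$, completing the proof. No serious obstacle is anticipated: the only substantive point is recognizing that $\HH^1(K)=0$ is preserved by coordinate projections via the $1$-Lipschitz bound, which then permits a clean diagonal reduction to the one-dimensional result in Lemma~\ref{effe1}.
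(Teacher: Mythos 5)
Your proposal is correct and follows essentially the same route as the paper: project $K$ onto each coordinate axis, apply Lemma~\ref{effe1} to each projection with a tolerance $\eps'$, and take $\phi$ to be the diagonal product $(f_1(x_1),\dots,f_d(x_d))$, with the same measure estimate $|A\cap(-r,r)^d|\le d(2r)^{d-1}\eps'$. You even spell out the details the paper leaves as ``easy to check,'' such as the justification that $|K_i|=0$ via the $1$-Lipschitz projection and the operator-norm bound for the diagonal Jacobian.
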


\begin{proof}
Fix for the time being $\eps'$, to be properly chosen later.
For $i=1,\dots, d$ let $K_i$ be the projection of $K$ on the 
$i$-th coordinate axis, and let $f_i:\R\to\R$ be 
the function obtained by applying Lemma~\ref{effe1} 
with $K_i$ and $\eps'$ in place of $K$ and $\eps$, and let $A_i$
be the set where $f_i'\ne 1$. 
Let
\[
\phi(x_1,\dots,x_d):= \big(f_1(x_1),\dots,f_d(x_d)\big)
\, .
\]
It is easy to check that $\phi$ has properties (i)-(iii) 
for $\eps'$ small enough.
Moreover $A$ is contained in the set of all 
$x$ such that $x_i\in A_i$ for some $i=1,\dots,d$, 
and therefore $|A\cap (-r,r)^d| \le d\eps'(2r)^{d-1}$, 
which is less that $\eps$ for $\eps'$ small enough. 
\end{proof}

\begin{lemma}
\label{killsinglemma}
Let $\mu, \mu'$ be measures in $\M^+(\Omegabar)$
and assume that $\mu=\mu'+\lambda$ where $\lambda$ is a 
positive measure supported on a 
Borel set $E$ with $\HH^1(E)=0$.
Then for every $u\in C^\infty_c(\Omega)$ 
and every $\delta>0$ there exist $v\in C^\infty_c(\Omega)$
such that 
$\|v-u\|_\infty \le\delta$ and
\begin{equation}
\label{killsingestimate}
\int_\Omega |\nabla v|^2 d\mu 
\le \int_\Omega |\nabla u|^2 d\mu' + \delta
\, . 
\end{equation}
\end{lemma}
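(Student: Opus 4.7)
The plan is to take $v := u\circ\phi$, where $\phi$ is a smooth near-identity map produced by Lemma~\ref{effe2} that has been \emph{flattened} (i.e., $\nabla\phi = 0$) on a neighborhood of a compact set $K\subset E$ carrying almost all the mass of $\lambda$. The flattening kills the $\lambda$-contribution to $\int|\nabla v|^2\,d\mu$, while the bound $|\nabla\phi|\le 1$ keeps the $\mu'$-contribution almost unchanged.

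By inner regularity of the finite Borel measure $\lambda$, I first fix a compact $K\subset E$ with $\lambda(E\setminus K)\le\eta$, for a small $\eta > 0$ to be chosen later; since $K\subset E$, I still have $\HH^1(K) = 0$. Applying Lemma~\ref{effe2} (with $d=2$) to $K$ produces, for any $\eps > 0$, a smooth $\phi\colon\R^2\to\R^2$ with $|\phi(x)-x|\le\eps$, $\nabla\phi\equiv 0$ on some open neighborhood $N$ of $K$, and $|\nabla\phi(x)|\le 1$ everywhere. Then I set $v := u\circ\phi$, extending $u$ by zero outside $\Omega$. For $\eps < \dist(\mathrm{spt}(u),\bd\Omega)$, the support of $v$ is contained in the $\eps$-neighborhood of $\mathrm{spt}(u)$, which is a compact subset of $\Omega$, so $v\in C^\infty_c(\Omega)$; uniform continuity of $u$ yields $\|v-u\|_\infty \le \delta$ for $\eps$ small enough.

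For the energy estimate I split
\[
\int_\Omega|\nabla v|^2\,d\mu = \int_\Omega|\nabla v|^2\,d\mu' + \int_\Omega|\nabla v|^2\,d\lambda.
\]
Since $\nabla v(x) = \nabla u(\phi(x))\,\nabla\phi(x)$ vanishes on $N\supset K$ and is bounded in modulus by $\|\nabla u\|_\infty$ elsewhere, the $\lambda$-term is at most $\|\nabla u\|_\infty^2\,\lambda(E\setminus K)\le\|\nabla u\|_\infty^2\,\eta$. For the $\mu'$-term, from $|\nabla v|\le|\nabla u\circ\phi|$ (thanks to $|\nabla\phi|\le 1$) and uniform continuity of $\nabla u$, I get $|\nabla u(\phi(x))|^2 \le |\nabla u(x)|^2 + 2\|\nabla u\|_\infty\,\omega_{\nabla u}(\eps)$, so
\[
\int_\Omega|\nabla v|^2\,d\mu' \le \int_\Omega|\nabla u|^2\,d\mu' + 2\|\nabla u\|_\infty\,\omega_{\nabla u}(\eps)\,\mu'(\Omegabar).
\]
Choosing first $\eta$ small (depending on $\|\nabla u\|_\infty$ and $\delta$) and then $\eps$ small (depending on $u$, $\mu'$, $\delta$, and the flattening neighborhood $N$) will make the two error terms sum to at most $\delta$, yielding \eqref{killsingestimate}.

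Rather than a genuine analytical obstacle, the main point of care is the order of quantifiers: $\eta$, and hence the compact $K$ together with its flattening neighborhood $N$, must be fixed before $\eps$, because the $\mu'$- and sup-norm errors can only be made small once $\phi$ is taken close enough to the identity relative to a region in which the flattening has already been imposed. All the analytical inputs---inner regularity of the finite measure $\lambda$, the flattening map of Lemma~\ref{effe2}, and uniform continuity of a smooth compactly supported function---are already in place.
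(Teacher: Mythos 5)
Your proposal is correct and follows essentially the same route as the paper: both choose a compact $K\subset E$ carrying almost all of $\lambda$, compose $u$ with the flattening map of Lemma~\ref{effe2}, use $\nabla\phi=0$ near $K$ to annihilate the $\lambda$-contribution and $|\nabla\phi|\le 1$ plus uniform continuity of $\nabla u$ to control the $\mu'$-contribution. The only cosmetic difference is that you track two small parameters $\eta$ and $\eps$ where the paper uses a single one for both the choice of $K$ and the closeness of $\phi$ to the identity.
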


\begin{proof}
We fix for the time being $\eps>0$,
to be chosen properly through the proof. 
Then we choose a compact set $K\subset E$ such that
$\lambda(\Omegabar\setminus K)\le\eps$, 
we let $\phi:\R^2\to\R^2$ be the map
constructed in Lemma~\ref{effe2} for the set $K$,
and we set
\[
v(x):=u(\phi(x))
\quad\text{for every $x\in\R^2$.}
\]
The function $v$ is clearly smooth and compactly supported 
on $\R^2$, and the support is contained in $\Omega$ for 
$\eps$ sufficiently small.

The rest of the proof is divided in three steps.
In the following we use the letter $C$ 
to denote any constant that may depend on $\mu$ and $u$ 
but not on $\eps$; the value of $C$ may change
at every occurrence.

\smallskip
\emph{Step~1. Estimate of $|u-v|$.} 
Using property~(i) in Lemma~\ref{effe2}
we obtain
\[
|v-u| = |u(\phi)-u | 
\le \Lip(u) \, |\phi(x)-x|
\le C\eps
\, , 
\]
which implies $\|v-u\|_\infty\le\delta$ if we choose 
$\eps$ small enough.

\smallskip
\emph{Step~2. Estimates of $|\nabla v|$.} 
Using property~(iii) in Lemma~\ref{effe2}
we obtain
\begin{align*}
  |\nabla v|
  = |\nabla u(\phi)| \, |\nabla\phi|
  \le |\nabla u(\phi)| 
& \le |\nabla u| + \big|\nabla u(\phi) - \nabla u\big| \\
& \le |\nabla u| + \Lip(\nabla u) \, \eps
  = |\nabla u| + C\eps
  \,.
\end{align*}

\emph{Step~3. Proof of estimate~\eqref{killsingestimate}.} 
Property~(ii) in Lemma~\ref{effe2} implies that $\nabla v=0$ on $K$.
Using this fact and the estimate in Step~2, and recalling the 
choice of $K$, we obtain
\begin{align*}
\int_\Omega |\nabla v|^2 d\mu
= \int_{\Omega\setminus K} |\nabla v|^2 d\mu
& \le \int_{\Omega\setminus K} 
       \big( |\nabla u|+ C\eps\big)^2 \, d\mu \\
& = \int_{\Omega\setminus K} |\nabla u|^2 + C\eps \, d\mu \\
& \le \int_\Omega |\nabla u|^2 \, d\mu' 
  + \int_{\Omega\setminus K} |\nabla u|^2 \, d\lambda
  + C\eps \\
& \le \int_\Omega |\nabla u|^2 \, d\mu' 
  + C\eps
  \, ,
\end{align*}
which implies the desired estimate if we choose 
$\eps$ small enough.
\end{proof}

\begin{lemma}
\label{killsinglemma2}
Let $\mu, \mu'$ be as in Lemma~\ref{killsinglemma}.
Then for every $u\in C^\infty_c(\Omega)$ 
and every $\delta>0$ there exist $v\in C^\infty_c(\Omega)$
such that $\|v-u\|_\infty \le\delta$ and
\begin{equation}
\label{killsingestimate2}
E_f(\mu,v) \le E_f(\mu',u) + \delta
\, . 
\end{equation}
\end{lemma}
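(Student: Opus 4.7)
My plan is to reuse verbatim the construction from the proof of Lemma \ref{killsinglemma}: set $v:=u\circ\phi$, where $\phi$ is the smooth map supplied by Lemma \ref{effe2} for a compact set $K\subset E$ chosen so that $\lambda(\Omegabar\setminus K)\le\eps$, with $\eps$ a single small parameter to be fixed at the end. The difference $E_f(\mu,v)-E_f(\mu',u)$ splits into three summands, one per term in \eqref{emuu}, and I would estimate each by a constant multiple of $\eps$.

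The middle summand $\frac{m}{2}\int_\Omega|\nabla v|^2\,d\mu-\frac{m}{2}\int_\Omega|\nabla u|^2\,d\mu'$ is already controlled by Lemma \ref{killsinglemma}, so no further work is needed there. For the Dirichlet term I would note that the pointwise inequality $|\nabla v|\le|\nabla u|+C\eps$ obtained in Step~2 of the proof of Lemma \ref{killsinglemma} holds on all of $\R^2$ (not merely off $K$), and since $v$ is supported in a fixed compact subset of $\Omega$ for $\eps$ small enough, squaring and integrating yields $\int_\Omega|\nabla v|^2\,dx\le\int_\Omega|\nabla u|^2\,dx+C\eps$. For the linear term, the uniform estimate $\|v-u\|_\infty\le C\eps$ from Step~1 together with the finiteness of the total variation of $f$ give $\bigl|\int_\Omega(v-u)\,df\bigr|\le C\eps\,|f|(\Omega)$.

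Adding the three bounds gives $E_f(\mu,v)\le E_f(\mu',u)+C'\eps$ with a constant $C'$ depending only on $u$, $\mu$, and $|f|(\Omega)$. Choosing $\eps$ small enough that simultaneously $C'\eps\le\delta$ and $\|v-u\|_\infty\le\delta$ finishes the proof. I do not expect any substantial obstacle: no new idea beyond Lemma \ref{killsinglemma} is required, and the only conceptual point is that the single construction from that lemma \emph{automatically} controls the Dirichlet term (via the pointwise bound on $|\nabla v|$, which was already proved there but unused) and the linear term (via the $L^\infty$ closeness of $v$ to $u$). The main bookkeeping care is to run the construction once, with one value of $\eps$, and then read off the three estimates in parallel.
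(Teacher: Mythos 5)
Your proof is correct, and it rests on the same construction as the paper's ($v:=u\circ\phi$ with $\phi$ given by Lemma~\ref{effe2}), but the route to the conclusion is genuinely different. You re-open the proof of Lemma~\ref{killsinglemma}, run the construction once with a single parameter $\eps$, and read off three estimates in parallel: the reinforcement term from the lemma's own conclusion, the Dirichlet term from the global pointwise bound $|\nabla v|\le|\nabla u|+C\eps$ of its Step~2, and the linear term from $\|v-u\|_\infty\le C\eps$ together with the finiteness of the total variation $\|f\|$. The paper instead keeps Lemma~\ref{killsinglemma} entirely as a black box: it applies it to the modified measures $\bar\mu:=dx+m\mu$ and $\bar\mu':=dx+m\mu'$, which still satisfy its hypotheses because $\bar\mu=\bar\mu'+m\lambda$ with $m\lambda$ a positive measure carried by the same $\HH^1$-null set $E$; this controls the Dirichlet and reinforcement terms simultaneously in one application, leaving only the linear term to the $L^\infty$ estimate. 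Both arguments are sound. The paper's reduction is shorter and does not rely on internal details of the earlier proof; yours makes explicit that a single map $\phi$ does all the work, at the modest cost of re-deriving the Dirichlet estimate --- and your key observation there, that the Step~2 bound holds on all of $\R^2$ rather than only off $K$, is correct and, combined with the boundedness of $\Omega$ and the inclusion $\mathrm{spt}(v)\subset\Omega$ for small $\eps$, is exactly what makes that term close.
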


\begin{proof}
Consider the measures $\bar\mu:=dx + m\mu$
and $\bar\mu':=dx + m\mu'$, where $dx$ is the Lebesgue
measure on $\Omega$, and fix $\bar\delta \in (0,\delta]$, to
be chosen later.

Let now $v$ be the function obtained 
by applying Lemma~\ref{killsinglemma} with $\bar\mu$, $\bar\mu'$, 
$\bar\delta$ in place of $\mu$, $\mu'$, $\delta$.
Then $\|v-u\|_\infty \le\bar\delta \le\delta$ and
\begin{align*}
  E_f(\mu,v)  
& = \frac{1}{2}\int_\Omega |\nabla v|^2 d\bar\mu + \int_\Omega v\,df \\
& \le \frac{1}{2}\bigg[ \int_\Omega |\nabla u|^2 d\bar\mu' + \bar\delta \bigg]
    + \bigg[ \int_\Omega u\,df + \|u-v\|_\infty \|f\| \bigg] \\
& = E_f(\mu',u) + \Big( \frac{1}{2}+\|f\| \Big) \, \bar\delta 
  \, ,
\end{align*}
where $\|f\|$ is the total mass of the signed measure $f$.
Thus estimate~\eqref{killsingestimate2}
follows by choosing $\bar\delta$ sufficiently small.
\end{proof}

\begin{lemma}
\label{nonsharp}
Given $\mu,\mu'\in\M^+(\Omegabar)$ such that $\mu\ge\mu'$, 
then $\EE_f(\mu) \ge \EE_f(\mu')$.
\end{lemma}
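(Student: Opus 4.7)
The statement is essentially an immediate monotonicity observation, so the plan is very short. I would unfold the definition of $\EE_f$ and use monotonicity of the integral.

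The key observation is that for every fixed test function $u \in C^\infty_c(\Omega)$ the integrand $|\nabla u|^2$ is nonnegative, so whenever $\mu \ge \mu'$ as positive measures on $\Omegabar$ we have
\[
\int_\Omega |\nabla u|^2 \, d\mu \;\ge\; \int_\Omega |\nabla u|^2 \, d\mu'.
\]
Since the Lebesgue term $\tfrac{1}{2}\int_\Omega |\nabla u|^2\, dx$ and the linear term $\int_\Omega u\, df$ do not depend on the reinforcing measure, we obtain at once $E_f(\mu,u) \ge E_f(\mu',u)$ for every $u\in C^\infty_c(\Omega)$, by inspection of \eqref{emuu}.

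Taking the infimum over $u\in C^\infty_c(\Omega)$ on both sides preserves the inequality: indeed, for any $u$ one has $E_f(\mu,u) \ge E_f(\mu',u) \ge \EE_f(\mu')$, whence $\EE_f(\mu) = \inf_u E_f(\mu,u) \ge \EE_f(\mu')$. This gives the claim, and there is no real obstacle — the only thing to notice is that the monotonicity in $\mu$ of the pointwise functional $u \mapsto E_f(\mu,u)$ is preserved when passing to the infimum, which is a general fact about infima of families of functions dominating each other termwise.
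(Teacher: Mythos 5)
Your proof is correct and is essentially the same as the paper's: both rest on the termwise inequality $E_f(\mu,u)\ge E_f(\mu',u)$ for every $u\in C^\infty_c(\Omega)$, which passes to the infimum. Nothing to add.
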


\begin{proof}
This statement follows by the fact that $E_f(\mu',u) \le E_f(\mu,u)$
for every $u$ in $C^\infty_c(\Omega)$,
cf.~\eqref{emu} and~\eqref{emuu}.
\end{proof}

\begin{proposition}
\label{killsing}
Let $\mu, \mu'$ be measures in $\M^+(\Omegabar)$
and assume that $\mu=\mu'+\lambda$ where $\lambda$ is a 
positive measure supported on a 
Borel set $E$ with $\HH^1(E)=0$.
Then
\[
\EE_f(\mu) = \EE_f(\mu')
\, .
\]
\end{proposition}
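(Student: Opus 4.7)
The plan is to establish the equality as a direct consequence of the two preceding lemmas, combining Lemma \ref{nonsharp} with Lemma \ref{killsinglemma2}. The proof splits into two one-line inequalities, one trivial and one requiring the careful truncation-by-composition argument already encoded in the lemmas above.

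First, for the inequality $\EE_f(\mu) \ge \EE_f(\mu')$, I would simply invoke Lemma \ref{nonsharp}: since $\lambda \ge 0$ we have $\mu \ge \mu'$, and the lemma gives the inequality immediately. No further work is needed for this direction, because adding mass to $\mu$ can only make the pointwise energy $E_f(\mu,u)$ larger (or equal) for every fixed test function $u$, so the infimum can only grow.

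For the reverse inequality $\EE_f(\mu) \le \EE_f(\mu')$, I would fix an arbitrary $u \in C^\infty_c(\Omega)$ and $\delta > 0$, and apply Lemma \ref{killsinglemma2} to produce a test function $v \in C^\infty_c(\Omega)$ satisfying $E_f(\mu,v) \le E_f(\mu',u) + \delta$. Since $v$ is an admissible competitor in the infimum defining $\EE_f(\mu)$, this yields
\[
\EE_f(\mu) \le E_f(\mu,v) \le E_f(\mu',u) + \delta.
\]
Taking the infimum over $u \in C^\infty_c(\Omega)$ gives $\EE_f(\mu) \le \EE_f(\mu') + \delta$, and letting $\delta \to 0$ concludes the argument.

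All the genuine work has already been carried out in Lemmas \ref{effe1}--\ref{killsinglemma2}: the main point is that one can approximate any test function $u$ by a modified $v = u \circ \phi$ where $\phi$ is constructed (via Lemma \ref{effe2}) to have $\nabla \phi = 0$ on a neighborhood of a compact set $K \subset E$ carrying almost all of $\lambda$, while $\phi$ remains a near-identity with $|\nabla\phi|\le 1$. This forces $\nabla v = 0$ on $K$, so the extra mass of $\lambda$ concentrated on $E$ contributes negligibly to the Dirichlet term on $\mu$, while the $L^\infty$ closeness of $v$ to $u$ controls the linear term $\int v\,df$. With those lemmas in hand, the proof of the proposition itself is essentially a two-sentence combination and presents no additional obstacle.
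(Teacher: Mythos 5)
Your proposal is correct and follows exactly the paper's own argument: the inequality $\EE_f(\mu)\ge\EE_f(\mu')$ comes from Lemma~\ref{nonsharp} via monotonicity, and the reverse inequality from Lemma~\ref{killsinglemma2} by taking infima over $u$ and letting $\delta\to 0$. Your elaboration of the second step is just the routine unpacking of what the paper states in one line, so there is nothing to add.
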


\begin{proof}
The inequality $\EE_f(\mu) \ge \EE_f(\mu')$
is contained in Lemma~\ref{nonsharp}, 
the opposite inequality follows
from Lemma~\ref{killsinglemma2}. 
\end{proof}

\begin{lemma}
\label{effe3}
Let $\mu$ be a finite positive measure on $\R^d$
of the form $\mu=\theta\,\HH^1\trace\Sigma$ where
$\Sigma$ is a rectifiable set, 
and let $A$ be an open set that contains $\Sigma$.
For a.e.~$x\in\Sigma$ let $\tau(x)$ be the tangent 
line to $\Sigma$ at $x$ and let $P(x)$ be the matrix associated to 
the orthogonal projection of $\R^d$ onto $\tau(x)$.
Then for every $\eps>0$ there exist a smooth map $\phi:\R^d\to\R^d$
and a compact set $K\subset\Sigma$ such that
\begin{itemize}
\item[\rm(i)]
$|\phi(x)-x|\le\eps$ for all $x$, 
and $\phi(x)=x$ for $x\in\R^d\setminus A$;
\item[\rm(ii)]
$|\nabla\phi(x)|\le 10$ for all $x$, and $\nabla\phi(x)=I$
for $x\in\R^d\setminus A$;
\item[\rm(iii)]
$|\nabla\phi(x)-P(x)|\le \eps$ for all $x\in K$ and
$\mu(\R^d\setminus K)\le\eps$.
\end{itemize}
\end{lemma}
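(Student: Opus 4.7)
The plan is to construct $\phi$ as a perturbation of the identity supported in tubular neighborhoods of finitely many smooth pieces of $\Sigma$ that carry most of $\mu$, and which collapse the normal directions on those pieces so that $\nabla\phi$ becomes the tangential projection.

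\emph{Step 1 (reduction to finitely many $C^1$ arcs).} Since $\Sigma$ is rectifiable and $\mu$ is finite, I would invoke the standard structure theorem to find a countable family of pairwise disjoint $C^1$ arcs covering $\HH^1$-almost all of $\Sigma$, truncate to a finite subfamily, and shrink each of them slightly, obtaining finitely many pairwise disjoint compact $C^1$ arcs $K_1,\dots,K_N\subset\Sigma\cap A$ such that $K:=\bigcup_iK_i$ satisfies $\mu(\R^d\setminus K)\le\eps$ and the tangent to $K_i$ at every $x\in K_i$ coincides with~$\tau(x)$.

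\emph{Step 2 (local collapse).} Each $K_i$ admits a tubular neighborhood $U_i$ parametrized by $(s,t)\mapsto s+t$, with $s\in K_i$ and $t$ in the normal space at~$s$, $|t|<\delta_i$. I would take $\delta_i$ small enough that the $U_i$ are pairwise disjoint, contained in $A$, of diameter at most $\eps$, and that the tubular diffeomorphism $\Psi$ has differential within $\eps/10$ of an isometry throughout $U_i$ (possible by uniform continuity of the unit tangent along the compact $C^1$ arc $K_i$). Then fix a smooth $\eta:\R^{d-1}\to\R^{d-1}$ with $\eta\equiv 0$ on $B_{\delta_i/3}^{d-1}$, $\eta(t)=t$ outside $B_{2\delta_i/3}^{d-1}$, and $|\nabla\eta|\le 3$. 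In tubular coordinates set $\phi_i(s,t):=(s,\eta(t))$, and define $\phi$ by gluing $\phi_i$ on $U_i$ with the identity on $\R^d\setminus\bigcup_iU_i$; since $\phi_i=\mathrm{id}$ near $\partial U_i$, the resulting $\phi$ is smooth.

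\emph{Step 3 (verification of (i)--(iii)).} Property~(i) is immediate from $\mathrm{diam}(U_i)\le\eps$ and $\phi=\mathrm{id}$ outside $A$. For~(ii), the chain rule gives $|\nabla\phi|\le|D\Psi|\,|\nabla\phi_i|\,|D\Psi^{-1}|\le 3(1+\eps/10)^2\le 10$. For~(iii), at $x=s\in K_i$ one has $t=0$ and $\eta\equiv 0$ near $0$, so the coordinate Jacobian of $\phi_i$ is $\mathrm{diag}(1,0,\dots,0)$; the tubular frame at $t=0$ is orthonormal and its first vector is the unit tangent to $K_i$ at~$s$, hence in the ambient frame $\nabla\phi(x)$ is the orthogonal projection onto that tangent, which by Step~1 equals $P(x)$. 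Thus $\nabla\phi=P$ identically on $K$, and combined with $\mu(\R^d\setminus K)\le\eps$ this yields~(iii).

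The delicate point is Step~2: the tubular diffeomorphism of a merely $C^1$ arc need not be $C^1$ in the normal direction, so standard smoothness estimates do not apply directly; instead one has to argue by uniform continuity of the unit tangent along the compact arc that the tubular map becomes arbitrarily close to a Euclidean isometry as the tube radius shrinks. This is what simultaneously delivers the clean bound $|\nabla\phi|\le 10$ in~(ii) and the exact identity $\nabla\phi=P$ on $K$ that is used in~(iii).
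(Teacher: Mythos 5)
Your overall strategy (retain finitely many $C^1$ pieces carrying most of $\mu$, then modify the identity near those pieces so that the differential becomes the tangential projection) is the same as the paper's, but the execution in Step~2 has a genuine gap: a merely $C^1$ arc need not admit a tubular neighborhood of any positive radius. The reach of a $C^1$ (but not $C^{1,1}$) curve can be zero --- think of the graph of $t\mapsto|t|^{3/2}$ near the origin, whose curvature blows up --- so the map $(s,t)\mapsto s+t$ may fail to be injective on $K_i\times B_{\delta_i}$ for \emph{every} $\delta_i>0$. Worse, even where it is injective, this map is not $C^1$: differentiating it in the $s$-direction requires differentiating the normal frame along the curve, which needs $C^2$ regularity of $K_i$. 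Hence $\Psi$ is not a diffeomorphism, the glued $\phi$ is not smooth, and the chain-rule bounds in your Step~3 (in particular $|D\Psi|\le 1+\eps/10$ and the exact identity $\nabla\phi=P$ on $K$) do not follow. Your closing remark correctly flags this as the delicate point, but uniform continuity of the unit tangent only controls how fast the tangent turns; it does not restore injectivity of the normal exponential map, nor differentiability of the normal frame.

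The paper circumvents exactly this obstacle by never using normal projections. Each piece $K_i$ is chosen, after a rotation, to lie on the graph of a $C^1$ function $\gamma_i:\R\to\R^{d-1}$ with $|\dot\gamma_i|\le\eps'$ (i.e.\ a nearly horizontal graph); this graph is then replaced by a nearby \emph{smooth} graph $g_i$ with $|\gamma_i-g_i|\le\delta$ and $|\dot g_i|\le\eps'$, and one uses the smooth \emph{vertical} projection $p_i(x)=(x_1,g_i(x_1))$ onto that graph, glued to the identity by a partition of unity $\sigma_i$ with $|\nabla\sigma_i|\le 2/\delta$. Since $|x-p_i(x)|\le 3\delta$ on the support of $\sigma_i$, the cutoff term contributes at most $3\delta\cdot 2/\delta=6$ to $|\nabla\phi|$, giving the bound $10$; on $K_i$ one has $\sigma_i\equiv 1$, so $\nabla\phi=\nabla p_i$, which is only $O(\eps')$-close to $P(x)$ rather than equal to it --- but that is all that (iii) requires. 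If you want to salvage your write-up, replace the tubular-neighborhood collapse by this graph-flattening and smooth-approximation device (or otherwise first approximate each $C^1$ arc by a genuinely smooth one and project in a fixed direction transverse to it); as it stands, Step~2 does not go through.
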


\begin{proof}
The proof is divided in several steps.
We fix for the time being $\eps'\in (0,1]$, 
to be chosen at the end of the proof.

\smallskip
\emph{Step~1. Construction of $\phi$.}
Using the fact that $\Sigma$ is rectifiable 
and $\mu$ is supported on $\Sigma$ we can 
find compact sets $K_i\subset\Sigma$ 
and curves $\Gamma_i$ of class $C^1$, with
$i=1,\dots,n$, such that:
\begin{itemize}
\item
the sets $K_i$ are disjoint and contained in $\Gamma_i$;
\item
$\mu(\R^d\setminus K)\le\eps$ where $K:=K_1\cup\dots\cup K_n$;
\item
up to a rotation $R_i$, $\Gamma_i$ 
agrees with the graph of a $C^1$ map $\gamma_i:\R\to\R^{d-1}$ 
(we identify $\R^d$ with $\R\times\R^{d-1}$) 
and $|\dot\gamma_i|\le\eps'$ everywhere.
\end{itemize}
Then we find $\delta\in (0,\eps/3]$ such that
\begin{itemize}
\item
the open $\delta$-neighborhoods $K_i^\delta$ are disjoint 
and contained in~$A$.
\end{itemize}
Next we find smooth curves $G_i$ 
and smooth functions $\sigma_i:\R^d\to [0,1]$ such that
\begin{itemize}
\item
$\sigma_i=1$ on a neighborhood of $K_i$ and $\sigma_i=0$ 
out of $K_i^\delta$;
\item
$|\nabla\sigma_i|\le 2/\delta$ everywhere;
\item
$G_i$ agrees, up to the rotation $R_i$, with the graph of a 
$C^\infty$ map $g_i:\R\to\R^{d-1}$ such that
$|\gamma_i-g_i| \le \delta$ and $|\dot g_i| \le \eps'$ everywhere.
\end{itemize}
For every $i$ we let $p_i$ be the projection 
of $\R^n$ onto $G_i$ defined by
\[
p_i(x)=p_i(x_1,\dots,x_d):= \big( x_1,g_i(x_1) \big)
\]
(modulo the rotation $R_i$).
Finally we set $\sigma_0:= 1 - (\sigma_1 +\cdots+\sigma_n)$, 
so that the functions $\sigma_0,\dots,\sigma_n$ form a 
partition of unity, and define
\begin{equation}
\label{formulafi}
\phi(x) 
:= \sigma_0(x) \, x + \sum_{i=1}^n \sigma_i(x) \, p_i(x)
= x + \sum_{i=1}^n \sigma_i(x) \, \big( p_i(x)-x \big)
\, .
\end{equation}

In the rest of this proof we assume for simplicity 
that the rotations $R_i$ are the identity, and 
we denote by $C$ any constant that does not depend 
on $\eps'$ and $\delta$; the value of $C$ may 
vary at every occurrence.

\smallskip
\emph{Step~2. $|x-p_i(x)|\le 3\delta$ for every $x\in K^\delta_i$.}
Write $x=(x_1,x')$ with $x'\in\R^{d-1}$.
Since $\Gamma_i$ is the graph of $\gamma_i$ and $|\dot\gamma_i|\le 1$,
the fact that $\dist(x,\Gamma_i)\le\delta$ implies
$|x'-\gamma_i(x_1)| \le 2\delta$. Then the assumption $|\gamma_i-g_i|\le\delta$ 
yields $|x'-g_i(x_1)| \le 3\delta$, which is the desired estimate.

\smallskip
\emph{Step~3. Proof of statement~(i).}
Given $x\in\R^n$, for every $i$ such that $\sigma_i(x)\ne 0$
there holds $x\in K^\delta_i$, and then 
$|x-p_i(x)| \le 3\delta$ by Step~2. Hence~\eqref{formulafi} gives
\[
|\phi(x)-x| 
\le \sum_{i=1}^n \sigma_i(x) \, \big| p_i(x)-x \big| 
\le 3\delta \le\eps
\, . 
\]
On the other hand, if $x$ does not belong 
to $A$ then it does not belong to any $K^\delta_i$, 
which means $\sigma_i(x)=0$, and therefore \eqref{formulafi} 
yields $\phi(x)=x$.

\smallskip
\emph{Step~4. Proof of statement~(ii).}
Formula~\eqref{formulafi} gives
\begin{equation}
\label{nablafi}
\nabla\phi(x) 
= I +\sum_{i=1}^n \sigma_i(x) \big(\nabla p_i(x) - I\big) 
    +\sum_{i=1}^n \big(p_i(x) - x\big) \otimes \nabla\sigma_i(x) 
\,. 
\end{equation}
If $x$ does not belong to $A$ then $x$ does not belong
to the support of $\sigma_i$ for every $i$, and formula~\eqref{nablafi} 
reduces to $\nabla\phi(x)=I$.

Consider now $x$ arbitrary. From formula~\eqref{nablafi} we obtain
\begin{align*}
|\nabla\phi(x)|
& \le 1 +\sum_{i=1}^n \sigma_i(x) \, \big(1+|\nabla p_i(x)|\big)
        +\sum_{i=1}^n \big|x-p_i(x)\big| \cdot \big|\nabla\sigma_i(x)\big| \\
& \le 1 + 3 + 3\delta \cdot \frac{2}{\delta} = 10
  \, .
\end{align*}
For the second inequality we used that 
$\sigma_i(x)= 0$ and $\nabla\sigma_i(x)= 0$
for all $i$ except at most one, and the following estimates:
$|\nabla p_i(x)|\le 2$ (use the definition of $p_i$ 
and the bound $|\dot g_i|\le\eps'\le 1$), 
$|\nabla\sigma_i|\le 2/\delta$ (by the choice of $\sigma_i$), 
and $|x-p_i(x)|\le 3\delta$ (Step~2). 

\smallskip
\emph{Step~5. $|\nabla p_i(x)-P(x)|\le C\eps'$ for $x\in K_i$.}
Let $P$ be the $d\times d$ matrix associated to the projection of
$\R^d$ onto the line $\R\times\{0\}$, that is, the matrix with
all entries equal to $0$ except $P_{11}=1$.
The definition of $p_i$ and the assumption $|\dot g_i|\le\eps'$ imply
$|\nabla p_i(x)-P|\le C\eps'$, while the assumption $|\dot\gamma_i|\le\eps'$
implies $|P(x)-P|\le C\eps'$. 

\smallskip
\emph{Step~6. Proof of statement~(iii).}
We already know that $\mu(\R^d\setminus K)\le\eps$.
Moreover, if $x$ belongs to $K$ then it belongs to $K_i$ 
for some $i$ and since $\sigma_i=1$ in a neighborhood of $K_i$, 
formula~\eqref{nablafi} reduce to $\nabla\phi(x)=\nabla p_i(x)$. 
We conclude the proof using the estimate in Step~5 
and choosing $\eps'$ small enough.
\end{proof}

\begin{lemma}
\label{killnorlemma}
Let $\mu$ be a measure in $\M^+(\Omegabar)$
of the form $\mu=\theta\,\HH^1\trace\Sigma$ where
$\Sigma$ is a rectifiable set.
Then for every $u\in C^\infty_c(\Omega)$ 
and every $\delta>0$ there exist $v\in C^\infty_c(\Omega)$
such that $\|v-u\|_\infty \le\delta$ and
\begin{equation}
\label{killnorestimate}
E_f(\mu,v) \le E_f^*(\mu,u) + \delta
\, .
\end{equation}
\end{lemma}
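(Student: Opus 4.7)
The plan is to define $v := u\circ\phi$, where $\phi$ is the smooth near-identity map provided by Lemma~\ref{effe3}. The chain rule gives $\nabla v(x) = \nabla u(\phi(x))\,\nabla\phi(x)$, and since property~(iii) of that lemma says $\nabla\phi(x)\approx P(x)$ at $\mu$-almost every $x\in\Sigma$, the full gradient of $v$ on $\Sigma$ automatically reduces to $\nabla u(x)\,P(x) = \nablatau u(x)$. Outside a small neighborhood of $\Sigma$ the map $\phi$ is the identity, so the contribution of the Lebesgue integral is essentially unchanged.

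First I would fix a small $\eps>0$, to be chosen at the very end, and, using the fact that a $1$-rectifiable set in $\R^2$ has zero Lebesgue measure, pick an open set $A$ with $\Sigma\subset A\subset\Omega$ and $|A|\le\eps$ (we may assume $\Sigma\subset\Omegabar$ since $\mu$ lives there). Then I would apply Lemma~\ref{effe3} with this $A$ and $\eps$ to obtain $\phi$ and a compact $K\subset\Sigma$ satisfying (i)-(iii). Setting $v:=u\circ\phi$, property~(i) and the identity $\phi=\mathrm{id}$ outside $A$ ensure, for $\eps$ small, that $v\in C^\infty_c(\Omega)$, and that $\|v-u\|_\infty\le \Lip(u)\,\eps$.

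Next I would estimate the three contributions to $E_f(\mu,v)-E_f^*(\mu,u)$. The Lebesgue term is handled by noting that $\nabla v=\nabla u$ outside $A$ (by~(ii)), while $|\nabla v|^2\le 100\|\nabla u\|_\infty^2$ on $A$ (again by~(ii)), so the difference is at most $C|A|\le C\eps$. The linear term is bounded by $\Lip(u)\,\eps\,\|f\|$, where $\|f\|$ is the total variation of $f$. The $\mu$-term is split at $K$: since $\mu(\Omega\setminus K)\le\eps$ by~(iii) and $|\nabla v|\le 10\|\nabla u\|_\infty$ everywhere, the contribution of $\Omega\setminus K$ is $O(\eps)$, while on $K$ one writes
\[
\nabla v - \nablatau u
= \nabla u(\phi)\,\big(\nabla\phi-P\big)
+ \big(\nabla u(\phi)-\nabla u\big)\,P
\]
and uses~(iii), $|\phi-\mathrm{id}|\le\eps$, and the smoothness of $u$ to conclude that $|\nabla v|\le|\nablatau u|+C\eps$ pointwise on $K$; squaring and integrating against $\mu$ then yields $\int_K|\nabla v|^2\,d\mu\le\int_\Sigma|\nablatau u|^2\,\theta\,d\HH^1+C\eps\,\mu(\Omegabar)$.

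Combining all three estimates gives $E_f(\mu,v)\le E_f^*(\mu,u)+C\eps$, and choosing $\eps$ small enough proves the lemma. The main delicacy is the pointwise bound on $K$, which requires simultaneously replacing $\nabla\phi(x)$ by $P(x)$ (via~(iii)) and $\nabla u(\phi(x))$ by $\nabla u(x)$ (via~(i) and uniform continuity of $\nabla u$) to convert $|\nabla v|$ into $|\nablatau u|$, with errors that are uniform in $x\in K$ and hence produce a total error of order $\eps$ after integration against the finite measure $\mu$.
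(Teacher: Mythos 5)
Your proposal is correct and follows essentially the same route as the paper: define $v=u\circ\phi$ with $\phi$ from Lemma~\ref{effe3}, treat the Lebesgue, linear, and $\mu$-terms separately, and on the good set $K$ convert $|\nabla v|$ into $|\nablatau u|+C\eps$ by comparing $\nabla\phi$ with $P$ and $\nabla u(\phi)$ with $\nabla u$. The only difference is a cosmetic one in the algebraic splitting of $\nabla v-\nablatau u$ (two terms instead of the paper's three), which changes nothing in substance.
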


\begin{proof}
We fix $\eps>0$, to be chosen later. 
We take an open set $A\supset\Sigma$ 
such that $|A|\le\eps$, 
and we then let $\phi$ and $K$ be the map 
and the compact set given by Lemma~\ref{effe3}.
We now set
\[
v(x):=u(\phi(x))
\quad\text{for every $x\in\R^2$.}
\]
The function $v$ is smooth and its support is 
contained in $\Omega$ for $\eps$ sufficiently small, 
and the desired properties of $v$ follow by the properties of~$\phi$ 
stated in Lemma~\ref{effe3}. As usual, the letter $C$ denotes
any constant which does not depend on $\eps$.

Using property~(i) in Lemma~\ref{effe3}, 
for every $x\in\R^2$ we obtain
\[
|v-u| 
= |u(\phi)-u| 
\le \Lip(u) \, |\phi(x)-x| 
\le C\eps
\, , 
\]
which implies $\|v-u\|_\infty\le\delta$ for $\eps$ small enough, 
and
\begin{equation}
\label{killnor2}
\bigg| \int_\Omega v\, df - \int_\Omega u\, df \bigg|
\le C\eps 
\, .
\end{equation}
Using property~(ii) in Lemma~\ref{effe3} we obtain
\begin{equation}
\label{killnor3}
|\nabla v|=|\nabla u(\phi) | \cdot |\nabla\phi|
\le C 
\, ,
\end{equation}
while properties~(i) and (ii) imply that
$\nabla v=\nabla u$ for every $x\in\Omega\setminus A$.
Therefore
\begin{equation}
\label{killnor4}
\int_\Omega |\nabla v|^2 \, dx 
\le \int_{\Omega\setminus A} |\nabla u|^2 \, dx + C|A|
\le \int_\Omega |\nabla u|^2 \, dx + C\eps
\,.
\end{equation}
For every $x\in K$ we write $\nabla v$ as follows, 
where $P(x)$ is the matrix associated to the projection 
on the approximate tangent line to $\Sigma$ at $x$:
\[
\nabla v 
= \nabla u(\phi)\,\nabla\phi
= \big( \nabla u(\phi) -\nabla u\big)\,\nabla\phi
 +\nabla u\,\big(\nabla\phi-P\big)+\nabla u\,P
\, .
\]
Then, recalling that $\nabla u \, P=\nablatau u$
and using properties (i)-(iii), we obtain:
\begin{align*}
|\nabla v| 
& \le \big| \nabla u(\phi) -\nabla u\big|\cdot|\nabla\phi|
      + |\nabla u| \cdot |\nabla\phi-P| + |\nablatau u| \\
& \le \Lip(\nabla u) \, |\phi(x)-x| \, C
      + C\eps + |\nablatau u|
   =|\nablatau u| +C\eps
   \,.
\end{align*}
Using this estimate and \eqref{killnor3}, and the fact that
$\mu(\R^2\setminus K)\le\eps$, we obtain
\begin{equation}
\label{killnor5}
\int_{\Sigma} |\nabla v|^2 \, d\mu 
= \int_K |\nablatau u|^2 +C\eps \, d\mu 
  + \int_{\Sigma\setminus K} C \, d\mu
\le \int_\Sigma |\nablatau u|^2 \, d\mu + C\eps
\, .
\end{equation}
Finally \eqref{killnorestimate}
follows from \eqref{killnor2}, \eqref{killnor4} 
and \eqref{killnor5} by choosing $\eps$ small enough.
\end{proof}

\begin{proposition}
\label{killnorm}
Let $\mu$ be a measure in $\M^+(\Omegabar)$
of the form $\mu=\theta\,\HH^1\trace\Sigma$ where
$\Sigma$ is a rectifiable set.
Then
\[
\EE_f(\mu) = \EE_f^*(\mu)
\, .
\]
\end{proposition}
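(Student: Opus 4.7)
The plan is to prove the two inequalities $\EE_f^*(\mu) \le \EE_f(\mu)$ and $\EE_f(\mu) \le \EE_f^*(\mu)$ separately. Only the latter requires real work; the former is a pointwise estimate, while the latter is essentially a direct corollary of Lemma~\ref{killnorlemma}.

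For the easy direction $\EE_f^*(\mu) \le \EE_f(\mu)$: at $\HH^1$-a.e.~$x\in\Sigma$ the approximate tangent line $\tau(x)$ exists, and for any $u\in C^\infty_c(\Omega)$ the orthogonal projection of $\nabla u(x)$ onto $\tau(x)$ is precisely $\nablatau u(x)$, so
\[
|\nablatau u(x)| \le |\nabla u(x)|
\quad \text{for $\HH^1$-a.e.~$x\in\Sigma$.}
\]
Multiplying by $\theta$ and integrating against $\HH^1\trace\Sigma$ gives $\int_\Sigma |\nablatau u|^2\, \theta\, d\HH^1 \le \int_\Omega |\nabla u|^2\, d\mu$. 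The remaining terms in $E_f$ and $E_f^*$ coincide, so $E_f^*(\mu,u) \le E_f(\mu,u)$ for every $u\in C^\infty_c(\Omega)$. Taking the infimum over $u$ yields $\EE_f^*(\mu) \le \EE_f(\mu)$.

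For the nontrivial direction $\EE_f(\mu) \le \EE_f^*(\mu)$: fix any $u\in C^\infty_c(\Omega)$ and any $\delta>0$, and apply Lemma~\ref{killnorlemma} to obtain $v\in C^\infty_c(\Omega)$ with $E_f(\mu,v) \le E_f^*(\mu,u) + \delta$. Then
\[
\EE_f(\mu) \le E_f(\mu,v) \le E_f^*(\mu,u) + \delta.
\]
Since $u$ was arbitrary we can pass to the infimum on the right to get $\EE_f(\mu) \le \EE_f^*(\mu) + \delta$, and since $\delta$ was arbitrary we conclude $\EE_f(\mu) \le \EE_f^*(\mu)$.

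The only substantial content is Lemma~\ref{killnorlemma}, which has already been proved; that construction is what really does the work, replacing a competitor $u$ by a precomposition $v=u\circ\phi$ with a map $\phi$ that almost projects onto $\Sigma$ (so that $\nabla v$ on $\Sigma$ essentially reduces to $\nablatau u$), leaving $\phi$ close to the identity off a small neighborhood of $\Sigma$ (so the bulk Dirichlet term and the linear term in $f$ change by at most $O(\eps)$). Thus no further obstacle remains beyond combining these two inequalities.
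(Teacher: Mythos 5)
Your proof is correct and follows exactly the paper's argument: the easy inequality $E_f^*(\mu,u)\le E_f(\mu,u)$ pointwise in $u$ (since $\nablatau u$ is the projection of $\nabla u$ onto the tangent line) gives $\EE_f^*(\mu)\le\EE_f(\mu)$, and the reverse inequality is deduced from Lemma~\ref{killnorlemma} by the standard infimum-plus-$\delta$ argument. The paper's own proof is just a two-line version of the same reasoning.
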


\begin{proof}
The trivial inequality $E_f(\mu,u) \ge E_f^*(\mu,u)$
implies $\EE_f(\mu) \ge \EE_f^*(\mu)$; the opposite 
inequality follows from Lemma~\ref{killnorlemma}.
\end{proof}

\begin{proof}[Proof of Theorem~\ref{equivalence}] 
Combine Propositions~\ref{killsing} and~\ref{killnorm}.
\end{proof}

\begin{proof}[Proof of Proposition~\ref{emumin}] 
If $f$ belongs to $L^p(\Omega)$ for some $p>1$ then it 
belongs also to the dual of $H^1_0(\Omega)$, and therefore 
the functional $\smash{E_f^*(\mu,\cdot)}$ is coercive, 
which implies that the infimum $\EE_f^*(\mu)$ is not $-\infty$. 
Clearly the same holds for $\EE_f(\mu)$.

Next we notice that the minimum of $\smash{E_f^*(\mu,u)}$ 
over all $u\in\smash{H^1_0(\Omega)\cap H^1(S)}$ is attained
because this functional is coercive and weakly 
lower-semicontinuous.

The first equality in \eqref{emu2} is proved in 
Theorem~\ref{equivalence}.

To prove the second equality in \eqref{emu2}
it is enough to show that $\smash{C^\infty_c(\Omega)}$
is dense in norm in $\smash{H^1_0(\Omega)\cap H^1(S)}$.
The proof of this density result is a bit 
delicate, but ultimately standard, 
and we simply list the key steps:
\begin{itemize}
\item
$C^\infty_c(\Omega)$ is dense in $\Lip_c(\Omega)$;
\item
$\Lip_c(\Omega)$ is dense in $\Lip_0(\Omega)$;
\item
$\Lip_0(\Omega)$ is dense in the subspace $X$ 
of all $u\in H^1_0(\Omega)\cap H^1(S)$ which are
constant on some open set $A$ (depending 
on $u$) such that $S\setminus A$ can be covered
by finitely many disjoint compact curves of 
class $C^1$;
\item
$X$ is dense in $\smash{H^1_0(\Omega)\cap H^1(S)}$. 
\end{itemize}
In all these statement ``dense'' refers to 
the norm of $H^1_0(\Omega)\cap H^1(S)$;
the last statement is the most delicate, 
and can be proved arguing as in the proof 
of Lemma~\ref{killsinglemma}.
\end{proof}

\begin{lemma}
\label{sharp}
Assume that $f\in L^p(\Omega)$ for some $p>1$
and that the support of $f$ is $\Omega$, 
and let $\mu$ be a measure in $\ML$ such that 
$\mu(\Omegabar)<L$.
Then there exists $\mu'$ in $\ML$ such that
$\mu'\ge\mu$ and $\EE_f(\mu')>\EE_f(\mu)$.

In particular every solution $\mu$ of problem~\eqref{maxbl}
satisfies $\mu(\Omegabar)=L$.
\end{lemma}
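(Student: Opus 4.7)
The plan is to argue by contradiction using the strict convexity of $E_f^*(\mu^a,\cdot)$. First I would reduce to the absolutely continuous case: by Theorem~\ref{equivalence} and Proposition~\ref{closure}, writing $\mu^a=\theta\,\HH^1\trace S$ with $\theta\ge 1$, we have $\EE_f(\mu)=\EE_f(\mu^a)$, and if $\eta$ is an absolutely continuous positive measure such that $\mu':=\mu+\eta\in\ML$ and $\EE_f(\mu^a+\eta)>\EE_f(\mu^a)$, then $\mu'\ge\mu$ and $\EE_f(\mu')=\EE_f(\mu^a+\eta)>\EE_f(\mu)$ as required (using that the singular part of $\mu$ is also singular with respect to $\HH^1\trace\mathrm{supp}(\mu')$). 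Since $f\in L^p(\Omega)$ with $p>1$, Proposition~\ref{emumin} provides a unique minimizer $u^*\in H^1_0(\Omega)\cap H^1(S)$ of $E_f^*(\mu^a,\cdot)$.

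Suppose for contradiction that $\EE_f(\mu^a+\eta)=\EE_f(\mu^a)$ and let $v^*$ be the unique minimizer for $\mu^a+\eta$. By the footnote after the definition of $H^1(S)$ (applied to $v^*\in H^1(\mathrm{supp}(\mu'))$ and a Lipschitz parametrization of $S$), we have $v^*|_S\in H^1(S)$, so $v^*$ is an admissible competitor for $E_f^*(\mu^a,\cdot)$. The identity $E_f^*(\mu^a+\eta,v^*)=E_f^*(\mu^a,v^*)+\tfrac{m}{2}\!\int|\nablatau v^*|^2\,d\eta$, combined with $E_f^*(\mu^a,v^*)\ge\EE_f^*(\mu^a)$ and the assumed equality $\EE_f^*(\mu^a+\eta)=\EE_f^*(\mu^a)$, forces $v^*=u^*$ (by strict convexity of $E_f^*(\mu^a,\cdot)$) and $\int|\nablatau u^*|^2\,d\eta=0$. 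It therefore suffices to construct $\eta$ for which this last integral is strictly positive.

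Depending on $u^*$ I distinguish two cases. In Case~A, $\nablatau u^*\ne 0$ on a subset of $S$ of positive $\HH^1$-measure: I take $\eta:=c\,\HH^1\trace S$ with $c\in(0,(L-\mu(\Omegabar))/\HH^1(S))$. Then $\mu'$ has support $S$ and belongs to $\ML$, while $\int|\nablatau u^*|^2\,d\eta=c\int_S|\nablatau u^*|^2\,d\HH^1>0$, contradiction. In Case~B, $\nablatau u^*=0$ $\HH^1$-a.e.\ on the connected set $S$, so $u^*$ equals a constant $a$ on $S$. I then attach a short segment $\gamma$ to $S$ at a point $y\in S$ and set $\eta:=\HH^1\trace(\gamma\setminus S)$ with $\HH^1(\gamma)\le L-\mu(\Omegabar)$; the support $S\cup\gamma$ is connected, $\mu'(\Omegabar)\le L$, and condition~(c) of Proposition~\ref{closure} is preserved, so $\mu'\in\ML$.

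The main technical obstacle is producing $\gamma$ in Case~B so that $\int_\gamma|\nablatau u^*|^2\,d\HH^1>0$. The key input is that $u^*$ satisfies $-\Delta u^*=f$ distributionally on the open set $\Omega\setminus S$ (since $\mu^a$ is supported on $S$); so if $u^*$ were identically $a$ on any non-empty open subset of $\Omega\setminus S$, then $f$ would vanish there, contradicting the hypothesis on the support of $f$. Consequently in every neighbourhood of every $y\in S$ there exists $x\in\Omega\setminus S$ with $u^*(x)\ne a$. Picking $y\in S$ (if $y\in S\cap\bd\Omega$, using the Lipschitz structure of $\bd\Omega$ to ensure a short segment from $y$ remains in $\Omegabar$) and $x$ at distance less than $(L-\mu(\Omegabar))/2$ from $y$, I take $\gamma:=[y,x]$, perturbing the direction slightly if necessary so that $\gamma\cap S$ is $\HH^1$-negligible. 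Elliptic regularity for $f\in L^p$ gives $u^*\in C^0(\Omega\setminus S)$, and by the footnote $u^*|_\gamma\in H^1(\gamma)$ is continuous on $\gamma$ with $u^*(y)=a\ne u^*(x)$; hence $\int_\gamma|\nablatau u^*|^2\,d\HH^1>0$, the desired contradiction. The ``in particular'' clause is immediate: any $\mu\in\ML$ with $\mu(\Omegabar)<L$ is strictly beaten by the $\mu'$ produced above, so it cannot solve~\eqref{maxbl}.
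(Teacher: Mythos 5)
Your argument is correct and follows essentially the same route as the paper's own proof: add a short segment attached to $S$ (or, in your Case~A, extra density on $S$ itself), use the strict convexity of $E_f^*$ to force the minimizer to remain unchanged under the assumed equality of energies, and derive a contradiction from the fact that $-\Delta u=f$ on $\Omega\setminus S$ together with the full support of $f$ prevents $u$ from being locally constant off $S$. Your explicit reduction to the absolutely continuous part via Theorem~\ref{equivalence} and your Case~A are refinements the paper does not spell out, but the mechanism is identical.
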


\begin{proof}
Let $S$ be the support of $\mu$. By Proposition~\ref{emumin}, 
$\EE_f(\mu)=E^*_f(\mu,u)$ where $u\in H^1_0(\Omega)\cap H^1(S)$ 
is a minimizer of $E^*_f(\mu,\cdot)$.
Then $u$ solves the equation 
$\Delta u=-f$ in $\Omega':=\Omega\setminus S$, 
which implies that $u$ is of class $C^1$ on 
$\Omega'$ and the set of all $x\in\Omega'$
such that $\nabla u(x)=0$
has empty interior.

In particular we can find a point $x\in\Omega'$
such that $\nabla u(x)\ne 0$ and 
$\dist(x,S)<\ell$ where $\ell:=L-\mu(\Omegabar)$.
We then choose a segment $S'$ which connects $x$ to $S$,
has length $\HH^1(S')\le \ell$, and
is not orthogonal to $\nabla u(x)$. 

We set $\mu':=\mu+\HH^1\trace S$.
Clearly $\mu'\ge\mu$ and the support of $\mu'$ is $S\cup S'$, 
and one easily checks that $\mu'$ belongs to $\ML$. 
Since $\mu'\ge\mu$ then $\EE_f(\mu')\ge\EE_f(\mu)$
(cf.~Lemma~\ref{nonsharp}), 
and we claim that this inequality is strict.

\smallskip
Assume by contradiction that $\smash{\EE_f(\mu')=\EE_f(\mu)}$, 
and let $u'\in H^1_0(\Omega)\cap H^1(S\cup S')$ 
be a minimizer of $\smash{E^*_f(\mu',\cdot)}$.
Then $u'$ is also a minimizer of $\smash{E^*_f(\mu,\cdot)}$, 
and since this functional is strictly convex
we have that $u$ and $u'$ agree as elements of
the space $H^1_0(\Omega)\cap H^1(S)$. This means that
\[
E^*_f(\mu,u') = \EE_f(\mu) = \EE_f(\mu') = E^*_f(\mu',u') 
\, .
\]
On the other hand $u$ is of class $C^1$ on $\Omega\setminus S$, 
and in particular is continuous, 
and therefore $u$ agrees with $u'$ on $S'$, which implies that
$\nablatau u'=\nablatau u$ a.e.~on $S'$, and by the choice 
of $S'$ we have that $\nablatau u$ is not identically null on $S'$.
This yields the contradiction $E^*_f(\mu,u')<E^*_f(\mu',u')$.
\end{proof}

\begin{proof}[Proof of Theorem \ref{exthsl}]
Let us prove the first part of the statement.
Let $\bar\mu\in\ML$ be an arbitrary solution of problem~\eqref{maxbl} 
(which exists by Theorem~\ref{exthml}), 
let $S$ be the support of $\bar\mu$ and let $\bar\mu^a=\bar\theta\,\HH^1\trace S$ 
be the absolutely continuous part of $\bar\mu$ with respect to $\HH^1\trace S$. 
Then $\bar\mu^a$ is also a solution of problem~\eqref{maxbl} by 
Theorem~\ref{equivalence}.

\smallskip
If $L=\bar\mu^a(\Omegabar)$ 
we set $\mu:=\bar\mu^a=\bar\theta\,\HH^1\trace S$.

\smallskip
If $L>\bar\mu^a(\Omegabar)=\int_S \bar\theta\, d\HH^1$ we set 
$\mu:=\theta\,\HH^1\trace S$ where $\theta$ is a any function 
such that $\theta\ge\bar\theta$ and 
$L=\int_S \theta\, d\HH^1=\mu(\Omegabar)$.

\smallskip
Let us now prove the second part of the statement.
Since $\bar\mu^a$ is a solution of problem~\eqref{maxbl}, 
Lemma~\ref{sharp} implies that $\bar\mu^a(\Omegabar)=L$. 
On the other hand $\bar\mu(\Omegabar)\le L$ because of the 
definition of $\ML$, and therefore we must have $\bar\mu(\Omegabar)=L$
and $\bar\mu=\bar\mu^a$, which concludes the proof.
\end{proof}

\section{Some necessary conditions of optimality}
\label{snece}
In this section we assume that the load $f$ belongs to $L^2(\Omega)$, 
and we consider a measure $\mu=\theta\HH^1\trace S$ in $\MLa$ 
(see \eqref{mla}) and the function $u\in H^1_0(\Omega)\cap H^1(S)$ 
that solves problem~\eqref{emu2}, that is, the unique minimizer 
of~$E^*_f(\mu,\cdot)$. 

In Proposition~\ref{necess1} we derive some necessary 
conditions that $\mu$ and $u$ must satisfy if $\mu$ solves the 
maximum problem~\eqref{maxbl}.

In Proposition~\ref{necess0} we derive the Euler-Lagrange 
equations for $u$ in strong form (assuming some regularity
on $S$ and $u$). 

\begin{proposition}
\label{necess1}
Assume that $\mu$ solves of the optimization problem~\eqref{maxbl} 
and that the set $\smash{ S_+:=\{x\in S \colon \theta(x)>1\} }$
has positive length. Then there exists a constant $c\in\R$ such that
\begin{itemize}
\item[\rm(i)]
$|\nablatau u|=c$ a.e.~on $S_+$;
\item[\rm(ii)]
$|\nablatau u|\le c$ a.e.~on $S\setminus S_+$.
\end{itemize}
\end{proposition}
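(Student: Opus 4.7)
The plan is to derive the two conditions from a first-variation / Lagrange-multiplier argument on the multiplicity $\theta$, with the supporting set $S$ kept fixed. The admissible perturbations are the directions $\eta \in L^\infty(S,\HH^1)$ with $\int_S \eta\,d\HH^1 = 0$ and $\eta \ge 0$ $\HH^1$-a.e.\ on $S\setminus S_+$: for $t>0$ sufficiently small, $\mu_t := \mu + t\eta\,\HH^1\trace S = (\theta+t\eta)\,\HH^1\trace S$ still satisfies $\theta+t\eta\ge 1$ on $S$ (using $\theta>1$ on $S_+$ and $\theta=1$ with $\eta\ge 0$ on the complement), so $\mu_t \in \MLa$ with $\mu_t(\Omegabar)=L$, and by maximality $\EE^*_f(\mu_t)\le\EE^*_f(\mu)$.

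Next I would extract a first-order inequality. Let $u_t \in H^1_0(\Omega)\cap H^1(S)$ be the unique minimizer of $E^*_f(\mu_t,\cdot)$ guaranteed by Proposition~\ref{emumin}. Since $E^*_f(\mu_t,v) = E^*_f(\mu,v) + \tfrac{tm}{2}\int_S |\nablatau v|^2\,\eta\,d\HH^1$, testing at $v=u_t$ and bounding the first summand from below by $\EE^*_f(\mu)$ yields
\[
\EE^*_f(\mu_t) \;\ge\; \EE^*_f(\mu) \;+\; \frac{tm}{2}\int_S |\nablatau u_t|^2\,\eta\,d\HH^1,
\]
which combined with the maximality inequality and $t>0$ gives $\int_S |\nablatau u_t|^2\,\eta\,d\HH^1 \le 0$ for every admissible $\eta$ and every sufficiently small $t>0$.

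The third step is to pass to the limit $t\to 0^+$. I would obtain the strong convergence $u_t \to u$ in $H^1_0(\Omega)\cap H^1(S)$ by a standard compactness argument: the coefficient $\theta+t\eta$ is uniformly bounded below by a positive constant, so the functionals $E^*_f(\mu_t,\cdot)$ are uniformly coercive; comparing with the fixed competitor $u$ yields a uniform $H^1_0\cap H^1(S)$ bound on $u_t$; any weak subsequential limit coincides with $u$ by uniqueness of the minimizer of $E^*_f(\mu,\cdot)$; and norm convergence then follows from the convergence of the quadratic energies together with strict convexity of the quadratic form. In particular $\nablatau u_t \to \nablatau u$ in $L^2(S,\HH^1)$, $|\nablatau u_t|^2 \to |\nablatau u|^2$ in $L^1(S,\HH^1)$, and the inequality passes to the limit:
\[
\int_S |\nablatau u|^2\,\eta\,d\HH^1 \;\le\; 0. \qquad (\star)
\]

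The conclusion is then a Lagrange-multiplier deduction from $(\star)$. Restricting $\eta$ to be supported on $S_+$ with mean zero, both signs are admissible, so $(\star)$ becomes an equality for every such $\eta$; the standard consequence is that $|\nablatau u|^2$ equals a nonnegative constant $c^2$ $\HH^1$-a.e.\ on $S_+$, proving~(i). For~(ii), given any bounded $\eta_1 \ge 0$ on $S\setminus S_+$, compensate it by subtracting a constant on $S_+$ (possible since $\HH^1(S_+)>0$) to form an admissible $\eta$; substituting~(i) into $(\star)$ and using $\int_S \eta\,d\HH^1 = 0$ to eliminate the $S_+$ contribution reduces the inequality to $\int_{S\setminus S_+}(|\nablatau u|^2 - c^2)\eta_1\,d\HH^1 \le 0$ for all such $\eta_1$, hence $|\nablatau u|^2 \le c^2$ $\HH^1$-a.e.\ on $S\setminus S_+$. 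The main obstacle is the strong convergence in the third step, needed precisely because $(\star)$ must be tested against sign-indefinite $\eta$ and weak lower semicontinuity alone would not suffice.
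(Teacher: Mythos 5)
Your overall route is the same as the paper's: keep $S$ fixed, perturb the multiplicity by a mean-zero $\eta$, derive the first-order inequality $\int_S|\nablatau u_t|^2\,\eta\,d\HH^1\le 0$ from optimality, upgrade it to $\int_S|\nablatau u|^2\,\eta\,d\HH^1\le 0$ via strong convergence of the minimizers, and finish with a Lagrange-multiplier deduction. The first-variation inequality and the strong-convergence step are correct and coincide with the paper's Steps~1--5.

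There is, however, a genuine gap in your description of the admissible perturbations. You claim that for any bounded $\eta$ with $\int_S\eta\,d\HH^1=0$ and $\eta\ge 0$ on $S\setminus S_+$, the measure $(\theta+t\eta)\,\HH^1\trace S$ is admissible for all sufficiently small $t>0$. This is false in general: $\theta>1$ on $S_+$ does not give a uniform bound $\theta\ge 1+\delta$ there, so if $\eta$ is negative on a portion of $S_+$ where $\theta-1$ is arbitrarily small, then $\theta+t\eta<1$ on a set of positive length for \emph{every} $t>0$. This breaks precisely the two places where you rely on the full class of perturbations: the claim for (i) that ``both signs are admissible'' for $\eta$ supported on $S_+$, and the compensation of $\eta_1$ by a constant on $S_+$ in the deduction of (ii). The paper avoids the problem by requiring $\eta\ge 1-\theta$ a.e.\ (which gives $\theta+\eps\eta\ge(1-\eps)\theta+\eps\ge 1$ for all $\eps\in[0,1]$), and, for (i), by testing only with $\eta$ supported on the exhausting sets $S_\delta:=\{\theta\ge 1+\delta\}$ with $|\eta|\le\delta$, letting $\delta\to0$ at the end; for (ii) the compensating part on $S_+$ is any admissible $\eta\ge 1-\theta$ rather than a constant, which is why the positive part on $S\setminus S_+$ is only constrained to have integral at most $\int_S(\theta-1)\,d\HH^1$. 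With this correction your argument goes through and is essentially the paper's proof.
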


\begin{proof}
The proof is divided in several steps; 
the key inequality is~\eqref{etaineq2}, 
which is obtained from~\eqref{etaineq}.

We consider variations of $\mu$ of 
the form $\mu_\eps:=(\theta+\eps\eta)\,\HH^1\trace S$, 
with $\eps>0$ and $\eta\in L^\infty(S)$ 
(in particular we keep the set $S$ fixed).
In order that $\mu_\eps$ be admissible, that is, 
$\mu_\eps\in\ML$ for $0 \le \eps\le 1$, 
we assume that
\begin{equation}
\label{admmue}
\text{$\int_S\eta\,d\HH^1=0$ and 
$\eta\ge 1-\theta$~a.e.}
\end{equation}

\emph{Step~1. Let $u_\eps$ be the minimizer of $E^*_f(\mu_\eps,\cdot)$:
then for every $\eta$ that satisfies \eqref{admmue} there holds}
\begin{equation}
\label{etaineq}
\int_S |\nablatau u_\eps|^2 \, \eta \, d\HH^1 \le 0
\, .
\end{equation}
By the choice of $u$ and $u_\eps$ we have that 
$\EE_f(\mu) = E^*_f(\mu,u)$ and $\EE_f(\mu_\eps)=E^*_f(\mu_\eps,u_\eps)$.
Therefore, the optimality of $\mu$ yields
\begin{equation}
\label{nece1}
E^*_f(\mu,u_\eps)
\ge E^*_f(\mu,u)
\ge E^*_f(\mu_\eps,u_\eps) 
= E^*_f(\mu,u_\eps) 
+ \frac{m\eps}{2}\int_S |\nablatau u_\eps|^2 \, \eta \, d\HH^1
  \,,
\end{equation}
and the comparison of the first and last terms of \eqref{nece1}
gives~\eqref{etaineq}.

\medskip
In the next four steps we prove that the functions $u_\eps$ 
converge strongly to $u$, which will imply that
\eqref{etaineq} holds with $u$ in place of $u_\eps$.

\smallskip
\emph{Step~2. The functions $u_\eps$ are uniformly bounded 
in $H^1_0(\Omega)\cap H^1(S)$.}
Note indeed that for $\eps$ small enough there holds
$1/2 \le \theta+\eps\eta$ and therefore
\begin{align*}
  \frac{1}{2}\int_\Omega |\nabla u_\eps|^2\,dx
  + \frac{m}{4}\int_S |\nablatau u_\eps|^2 & \, d\HH^1 
  - \int_\Omega fu_\eps \, dx \\
& \le E^*_f(\mu_\eps, u_\eps) 
  \le E^*_f(\mu_\eps, 0) =0
\, ,
\end{align*}
and the functional in the first line
is clearly coercive on~$H^1_0(\Omega)\cap H^1(S)$. 

\smallskip
\emph{Step~3. $E^*_f(\mu, u_\eps)$ converge to $E^*_f(\mu,u)$ as $\eps\to 0$.}
From~\eqref{nece1} we obtain 
\[
0 
\le E^*_f(\mu,u_\eps) - E^*_f(\mu,u)
\le - \frac{m\eps}{2}\int_S |\nablatau u_\eps|^2 \, \eta \, d\HH^1
\, , 
\]
and the last term tends to $0$ 
as $\eps\to 0$ by Step~2.

\smallskip
\emph{Step~4. The functions $u_\eps$ converge to $u$ 
weakly in $H^1_0(\Omega)\cap H^1(S)$
as $\eps\to 0$.}
By Step~3 and the weak lower-semicontinuity of $\smash{E^*_f(\mu,\cdot)}$, 
every weak* limit of the sequence $u_\eps$ is a minimizer of 
$\smash{E^*_f(\mu,\cdot)}$ and therefore it must be $u$ because 
this functional is strictly convex.

\smallskip
\emph{Step~5. The functions $u_\eps$ converge to $u$ 
strongly in $H^1_0(\Omega)\cap H^1(S)$
as $\eps\to 0$, and for every $\eta$ that 
satisfies \eqref{admmue} there holds}
\begin{equation}
\label{etaineq2}
\int_S |\nablatau u|^2 \, \eta \, d\HH^1 \le 0
\, .
\end{equation}
Since the linear term in $E^*_f(\mu,\cdot)$ is weakly continuous, 
the convergence of the energies in Step~3 implies the convergence 
of the energies without linear term, that is
$E^*_0(\mu,u_\eps) \to E^*_0(\mu, u)$.
Notice now that
\[
\Phi(\cdot):=\big(E^*_0(\mu, \cdot)\big)^{1/2}
\]
is an equivalent Hilbert norm on $H^1_0(\Omega)\cap H^1(S)$, 
and recall that in Hilbert spaces weak convergence plus 
convergence of the norms implies strong convergence.

Inequality~\eqref{etaineq2} follows from~\eqref{etaineq}
and the fact that the functions $|\nablatau u_\eps|^2$ converge 
to $|\nablatau u|^2$ in $L^1(S)$.

\smallskip
\emph{Step~6. Conclusion of the proof.}
Set $S_\delta:=\{x\in S\colon \theta(x)\ge 1+\delta\}$.
Note that inequality \eqref{etaineq2} holds for all 
$\eta$ which vanish on $S\setminus S_\delta$, 
satisfy $|\eta|\le \delta$ on $S_\delta$, and have integral 
$0$ on $S_\delta$. Since this class of functions is closed
by change of sign, the inequality is actually 
an equality, which can be written as 
\[
\int_{S_\delta} |\nablatau u|^2 \, \eta \, d\HH^1 = 0
\,,
\]
and implies that $|\nablatau u|^2$ is equal to some constant $c$
a.e.~on $S_\delta$. Since this holds for every $\delta>0$, 
we have proved statement~(i).

Using statement~(i) and recalling that for every admissible 
$\eta$ there holds
\[
\int_{S\setminus S_+} \hskip-5pt \eta \, d\HH^1= -\int_{S_+} \eta \, d\HH^1
\, , 
\]
we rewrite \eqref{etaineq2} as
\[
0 \ge 
\int_{S\setminus S_+} \hskip-5pt |\nablatau u|^2 \, \eta \,d\HH^1 
    + c^2\int_{S_+} \hskip-5pt \eta\,d\HH^1
=\int_{S\setminus S_+} \hskip-5pt \big(|\nablatau u|^2-c^2\big)\, \eta \, d\HH^1
\,,
\]
and since the restriction of $\eta$ to $S\setminus S_+$ can be an arbitrary
positive bounded function with integral less than $\int_S (\theta-1) \, d\HH^1$, 
this inequality implies that $|\nablatau u|^2-c^2 \le 0$ a.e.~on 
$S\setminus S_+$, which is statement~(ii).
\end{proof}

For the next result we need some
assumptions on $S$, $\theta$ and $u$.

We assume that $\theta$ is a continuous function
and that $S$ is a network of class $C^1$, 
that is, it can be written as a finite union of simple 
curves $S_i$ of class $C^1$ contained in $\Omegabar$ that 
intersect each other and $\bd\Omega$ only at the endpoints. 
We denote by $S^\#$ the set of all endpoints of the curves
$S_i$, and we say that $x\in S^\#$ is 
\begin{itemize}
\item
a boundary point if $x\in\bd\Omega$;
\item
a terminal point if $x\in\Omega$ and 
$x$ belongs to only one curve $S_i$;
\item
a branching point if $x\in\Omega$ and 
$x$ belongs to more than one curve $S_i$.
\end{itemize}
We choose an orientation $\tau$ of $S$,%
\footnote{
This means that $\tau$ agrees on each curve $S_i$ (except 
the endpoints) with is a continuous unit tangent field 
to $S_i$; we do not require that $\tau$ is continuous 
at branching points.
}
we denote by $\nu$ the associated normal, that is, the 
rotation of $\tau$ by $90^\circ$~counterclockwise, 
and write $\bd_\tau$ for the tangential derivative, 
$\bd_\nu^\pm$ for the normal derivatives
on the two sides of~$S$. 

Finally we assume that $u$ is of class $C^1$ on 
$\Omega\setminus S$, and that the normal derivatives
$\bd_\nu^\pm u$ esist at every point of $S\setminus S^\#$
and belong to $L^1(S)$. 
We write
\[
\big[ \bd_\nu u\big] := \bd_\nu^+ u - \bd_\nu^+ u
\, .
\]
(Note that this quantity does not depend on the 
choice of the normal~$\nu$.)

\begin{proposition} 
\label{necess0}
Under the assuptions on $S$, $\theta$ and $u$ stated above, we have that 
\begin{itemize}
\item
$u$ solves $-\Delta u=f$ on $\Omega\setminus S$ with boundary condition 
$u=0$ on $\bd\Omega$;
\item
$u$ solves $-m \, \bd_\tau\big(\theta\,\bd_\tau u\big)
= \big[\bd_\nu u\big]$ on each curve $S_i$ minus the endpoints;
\item
$u$ is of class $C^1$ on each curve $S_i$, including the endpoints.
\end{itemize}
In particular the values of $\bd_\tau u$ at the endpoints of $S_i$, 
denoted by $(\bd_\tau u)_i$ are well-defined, and for every 
$x\in S^\#$ we set
\[
\big[\bd_\tau u(x)\big]
:=\sum (\bd_\tau u(x))_i
\, ,
\]
where the sum is taken over all $i$ such that $x$ is an endpoint of 
$S_i$. Then
\begin{itemize}
\item
if $x$ is a boundary point, the Dirichlet 
condition $u(x)=0$ holds;
\item
if $x$ is a terminal point, the 
Neumann condition $\bd_\tau u(x)=0$ holds;
\item
if $x$ is a branching point, 
the Kirchhoff condition
$\big[\bd_\tau u\big(x)]=0$
holds.
\end{itemize}
\end{proposition}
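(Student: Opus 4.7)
The proof is a standard variational derivation: write the Euler-Lagrange equation in weak form, then integrate by parts to recover the strong form and the vertex conditions. By Proposition~\ref{emumin}, the minimizer $u$ satisfies
\[
\int_\Omega\nabla u\cdot\nabla\varphi\,dx + m\int_S\nablatau u\cdot\nablatau\varphi\,\theta\,d\HH^1 = \int_\Omega \varphi\,f\,dx
\]
for every test function $\varphi\in H^1_0(\Omega)\cap H^1(S)$. Testing with $\varphi\in C^\infty_c(\Omega\setminus S)$ immediately gives the distributional equation $-\Delta u=f$ on $\Omega\setminus S$, which together with the assumed $C^1$ regularity of $u$ on $\Omega\setminus S$ yields the first bullet; $u|_{\bd\Omega}=0$ is built into the definition of $H^1_0(\Omega)$.

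For the tangential equation the plan is to apply Green's formula on $\Omega\setminus S$ to rewrite $\int_\Omega\nabla u\cdot\nabla\varphi\,dx$ as $-\int_{\Omega\setminus S}\Delta u\cdot\varphi\,dx - \int_S [\bd_\nu u]\,\varphi\,d\HH^1$; the jump term arises from the two-sided normal derivatives of $u$ along $S$, whose existence and $L^1$ integrability are part of the hypotheses. On each curve $S_i$ I would then integrate the tangential contribution by parts in one dimension, producing a bulk term involving $\bd_\tau(\theta\,\bd_\tau u)$ together with endpoint contributions of the form $\pm\,\theta\,\bd_\tau u\,\varphi$ at the points of $S^\#$. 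Substituting both identities into the weak equation and localizing $\varphi$ near an interior point of some $S_i\setminus S^\#$ extracts the pointwise identity $-m\,\bd_\tau(\theta\,\bd_\tau u) = [\bd_\nu u]$ on $S_i$, which is the second bullet. Since $[\bd_\nu u]\in L^1(S)$ and $\theta$ is continuous with $\theta\ge 1$, this equation forces $\theta\,\bd_\tau u$ to be absolutely continuous up to the endpoints of $S_i$; dividing by the continuous strictly positive function $\theta$ shows that $\bd_\tau u$ extends continuously to those endpoints, giving $u\in C^1$ on $S_i$ including its endpoints.

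All bulk equations having been absorbed, what remains of the weak identity is the requirement that the sum of endpoint contributions vanishes for every admissible $\varphi$. Grouping by vertex $x\in S^\#$, this becomes, up to orientation signs, a condition of the form $\theta(x)\,[\bd_\tau u(x)]\,\varphi(x)=0$, where $[\bd_\tau u(x)]$ denotes the signed sum of the values $(\bd_\tau u)_i(x)$ over the curves $S_i$ incident to $x$. For $x\in\bd\Omega$ the admissible $\varphi$ all vanish at $x$, so no vertex condition emerges there; instead $u(x)=0$ follows from the trace property recalled at the end of the subsection preceding Proposition~\ref{emumin}. For $x\in\Omega$, $\varphi(x)$ is arbitrary and $\theta(x)\ge 1>0$, so we can cancel and extract the Neumann condition $(\bd_\tau u)_i(x)=0$ at terminal points (single-term sum) and the Kirchhoff condition $[\bd_\tau u(x)]=0$ at branching points.

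The main obstacle I anticipate is the rigorous application of Green's formula on $\Omega\setminus S$: this set fails to be a Lipschitz domain at branching and terminal points of $S$, so I would approximate it by an exhausting sequence of smooth subdomains obtained by excising small disks around $S^\#$ and pass to the limit, using the hypothesized $L^1$ bound on $\bd_\nu^\pm u$ to control the correction terms. The accompanying sign bookkeeping — needed so that the Kirchhoff condition appears in the form stated rather than with an orientation-dependent pattern — is delicate but routine, amounting to a coherent choice of the orientation $\tau$ near each branch point (or, equivalently, to interpreting $(\bd_\tau u)_i(x)$ as the derivative along the outward tangent of $S_i$ at $x$).
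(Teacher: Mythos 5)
Your proposal is correct and follows essentially the same route as the paper: derive the weak Euler--Lagrange equation, integrate by parts in the bulk to produce the jump term $[\bd_\nu u]$ on $S$, then integrate by parts along each $S_i$ to extract the tangential ODE, the $C^1$ regularity up to the endpoints, and finally the Neumann/Kirchhoff conditions at the vertices. Your explicit attention to justifying Green's formula on the non-Lipschitz set $\Omega\setminus S$ by excising small disks around $S^\#$ is a point the paper passes over silently, but it does not change the argument.
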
 

\begin{proof}
The full Euler-Lagrange equation for $u$ 
in the weak form is
\begin{equation}
\label{weakform}
\int_\Omega\nabla u \cdot \nabla\phi\,dx 
 + m\int_S \nablatau u \cdot \nablatau\phi \, \theta \, d\HH^1 
 - \int_\Omega f\phi\,dx=0
\quad\forall\phi\in C^\infty_c(\Omega)
  \,.
\end{equation}
Thus $u$ satisfies the equation 
$\Delta u=f$ on $\Omega\setminus S$ in the weak sense, 
and hence it belongs to $\smash{ H^2_{\mathrm{loc}}(\Omega\setminus S) }$.

Integrating by parts the first integral 
in \eqref{weakform} we obtain
\[
\int_\Omega \nabla u \cdot \nabla\phi \,dx
=\int_{\Omega\setminus S}\nabla u \cdot \nabla\phi \,dx
=\int_{\Omega\setminus S} f\phi\,dx 
 - \int_S \big[\bd_\nu u\big] \phi\, d\HH^1
\, ,
\]
and therefore \eqref{weakform} becomes
\begin{equation}
\label{weakform2}
m\int_S \bd_\tau u \cdot \bd_\tau\phi \, \theta \, d\HH^1 
  - \int_S\big[\bd_\nu u ] \, \phi\, d\HH^1
=0
\quad\forall\phi\in C^\infty_c(\Omega)
\,.
\end{equation}
Thus $u$ solves the equation 
$-m \, \bd_\tau (\theta\,\bd_\tau u)=\big[\bd_\nu u\big]$
in the weak sense on each curve $S_i$, 
which implies that $\theta\,\bd_\tau u$ belongs $W^{1,1}(S_i)$, 
and then also to $C^0(S_i)$, which in turn 
implies that $u$ belongs to $C^1(S_i)$.

Finally we integrate by parts the first integral in \eqref{weakform2} 
and obtain
\[
\sum_{x\in S^\#} \theta(x) \, \big[\bd_\tau u(x)\big] \, \phi(x)
= 0 
\quad\forall\phi\in C^\infty_c(\Omega) 
\, ,
\]
This implies that $\big[\bd_\tau u(x)\big] = 0$ for every 
$x\in S^\#$ which is not a boundary point; 
if $x$ is a terminal point this means $\bd_\tau u(x)=0$ .
\end{proof}

\section{Numerical approximation of optimal reinforcing networks}
\label{snum}
In this section we introduce a numerical strategy to approximate the 
solutions of the relaxed reinforcement problem \eqref{maxbl}.
Through this section we assume that $\Omega$ is a bounded \emph{convex}
domain, and that the load $f$ belongs to $L^2(\Omega)$. 

Thanks to Theorem~\ref{exthsl}
we can rewrite this optimization problem as
\begin{equation}
\label{pbm}
\max_{S,\theta} \, \min_u 
\left[
\frac{1}{2}\int_\Omega|\nabla u|^ 2 \, dx
+ \frac{m}{2} \int_S |\nabla u|^ 2 \, \theta \, d\HH^1 
%+ \frac{m}{2} \int_S |\nablatau u|^ 2 \, \theta \, d\HH^1 
- \int_\Omega f u \, dx
\right]
\, , 
\end{equation}
where the minimum is taken over all function 
$u\in H^1_0(\Omega)\cap H^1(S)$
and the maximum is taken over all $S$ and $\theta$ such that
$S$ is a compact, connected set with finite length contained 
in $\Omegabar$, $\theta$ is a function on $S$ with
$\theta\ge 1$ a.e.\ and $\int_S \theta\, d\HH^1=L$.

%Previous sections establish that this problem is well posed
%and satisfies optimality conditions of Proposition \ref{necess1}.
%First, we introduce the following simplification, 
%replacing the tangential contribution in cost function \eqref{pbm0} 
%by the full gradient of the state function 
%(see \cite{bobuse} or \cite{attouch14} Theorem 2.25, p.~164, 
%for a complete justification). 
%Namely, we consider in the following the cost function:
%\begin{equation}
%\label{pbm}
%\max_{(S,\theta)}\min_{u\in H^1_0(\Omega)}
%\frac{1}{2}\int_\Omega|\nabla u|^2\,dx +
%\frac{m}{2}\int_S|\nabla u|^2\theta\,d\HH^1 - \int_\Omega fu\,dx
%\, .
%\end{equation}

Since we expect problem \eqref{pbm} to have many local maxima, 
we focus on stochastic optimization algorithms which only 
require cost function evaluations to proceed.

\subsection{Spanning tree parametrization and discrete functional}
To discretize problem \eqref{pbm}, we consider a mesh $\T$ 
associated to the domain $\Omega$ made of $n_p$ points and 
$n_t$ triangles. 
We denote by $K$ and $M$ respectively the stiffness and mass 
matrices of dimensions $n_p\times n_p$ associated to the 
finite elements $P1$ on~$\T$. 
Moreover, we define $K_x$ and $K_y$ to be the differentiation 
matrices of $P1$ functions. 
More precisely, $K_x$ and $K_y$ are matrices of dimensions 
$n_t \times n_p$ which evaluate the operators $\bd_x$ and 
$\bd_y$ on piecewise linear continuous functions on the mesh~$\T$. 
Observe that due to the linearity of $P1$ elements, $\bd_x$ and 
$\bd_y$ are constant on every triangle of the mesh. 

Denoting by $V_\areas$ the column vector of size $n_t\times1$ 
containing the area measures of every triangle, we recall the 
simple identity
\[
K = K_x^T V_\areas K_x + K_y^T V_\areas K_y
\, .
\]
We denote by the letter $U$ a real vector of $n_p$ node values 
representing an element of $P1\cap H^1_0(\Omega)$.

Problem~\eqref{pbm} involves both a connected set and an 
associated weight function. 
In order to parametrize connected one dimensional structures, 
we follow the strategy developed in~\cite{buttazzo2002optimal}. 
Take $n_d=1,2,\dots$ and consider a set of $n_d$ points 
$P_1, \dots, P_{n_d} \in\Omega$.
We associate to such a set its canonical spanning tree 
$SP(P_1, \dots, P_{n_d})$, 
which is the polygonal set of minimal length connecting 
these points without introducing new branching points. 
Let us point out that, generically, $SP(P_1,\dots,P_{n_d})$ 
is the union of $n_d-1$ arcs. 

It is straightforward to establish that the family of all 
such spanning trees (with $n_d$ varying among all integers) 
is dense with respect to Hausdorff distance 
among compact connected subsets of $\Omegabar$. 
To describe an $L^1$ element of $SP(P_1, \dots, P_{n_d})$, 
we simply consider a vector $\theta_\weights$ of $n_d - 1$ 
values greater than $1$ which represents a piecewise constant 
function on every arc of the tree.

Let $V_\lengths (P_1,\dots,P_{n_d},\theta_\weights)$ be the 
vector of size $n_t\times1$ which contains the weighted lengths 
of $SP(P_1,\dots,P_{n_d})$ intersected with every triangle 
of the mesh $\T$. 
With previous notations, we can now introduce a discrete 
approximation of problem~\eqref{pbm}:
\begin{align*}
\max \, \min \, \bigg[ \frac{1}{2} \, U^T K U
+\frac{m}{2} \, U^T \left( K_x^T V_\lengths K_x+K_y^T V_\lengths K_y\right) U-M F \bigg]
\end{align*}
where $F$ is the linear interpolation of the function 
$f$ at the vertices of the mesh~$\T$, 
the minimum is taken over all $U \in P1 \cap H^1_0(\Omega)$,
the maximum is taken over all pairs
$(SP(P_1,\dots,P_{n_d}),\theta_\weights)$,
that satisfy the constraints that every value of 
$\theta_\weights$ is greater than $1$ and the following 
measure equality holds:
\begin{equation}
\label{wconstr}
\sum_{i=1}^{n_d-1}\HH^1(S_i) \, \theta_\weights(i)= L
\, , 
\end{equation}
where the $(S_i)_{1\le i\le n_d}$ are the $n_d-1$ edges 
of $SP(P_1,\dots,P_{n_d})$. 

Since the minimization problem is a strictly 
convex quadratic problem, it reduces to solve the linear system
\begin{equation}
\label{pblinearsystem}
        \left[ K+\frac{m}{2} \left( K_x^T V_\lengths K_x +
        K_y^T V_\lengths K_y\right)\right] U = M F
\,.
\end{equation}

\subsection{Parametrization of the constraints}
\begin{algorithm}[t]
\caption{Projection on weighted length and bound constraints.}
\label{algo:proj}
\begin{description}
\item[Input] 
$L$, $SP(P_1,\dots,P_{n_d})$, $\theta_\weights$, $h_s$.
\item[step 1] 
Compute the length $\overline{L}$ of $SP(P_1,\dots,P_{n_d})$ and 
the center of mass $\overline{C}$ of the points $P_1,\dots,P_{n_d}$.
\item[step 2] 
Define $(\overline{P_1},\dots,\overline{P_{n_d}})$ to be the image 
of $SP(P_1,\dots,P_{n_d})$ by the homothetic transformation 
with center $\overline{C}$ and ratio ${h_s L}/{\overline{L}}$.
\item[step 3] 
Project the weight vector $\theta_\weights$ on the convex set which 
is the intersection of the linear constraint \eqref{wconstr} with 
respect to $SP(\overline{P_1},\dots,\overline{P_{n_d}})$ and the 
bound constraints $\theta_\weights \ge1$. 
The projected vector is denoted by $\overline{\theta_\weights}$.
\item[Output] 
$SP(\overline{P_1},\dots,\overline{P_{n_d}}),\,\overline{\theta_\weights}$.
\end{description}
\end{algorithm}

As explained in the previous sections, we need the couple 
$(SP(P_1,\dots,P_{n_d}),\theta_\weights)$ to have weights greater 
than one and satisfies equality constraint \eqref{pblinearsystem}. 
To parametrize such admissible couples we introduce a last scale 
parameter denoted by $h_s\in(0,1)$. 
We introduce in Algorithm~\ref{algo:proj} a three steps procedure 
to produce an admissible pair 
$\smash{\big(SP(\overline{P_1},\dots,\overline{P_{n_d}}\big),\overline{\theta_\weights})}$ 
for a given triplet of parameters 
$\smash{\big(SP(P_1,\dots,P_{n_d}),\theta_\weights,h_s\big)}$.

\subsection{Technical details and complexity}
We summarize in Algorithm~\ref{algo:cost} the different 
steps required to compute the cost associated to a given 
set of parameters, that we choose as 
$\big(SP(P_1,\dots,P_{n_d}),\theta_\weights,h_s\big)$. 

We give below some technical details and underline the 
computational complexity of every step.

In the first phase of projection, only the final step 
of the procedure is not computationally trivial. 
Whereas the projection of a point onto an hyperplane can 
be analytically described, the projection on an hyperplane 
intersected with a box requires a specific attention.
In all our experiments, we used Dai and Fletcher 
algorithm~\cite{dai2006new} to obtain a fast and 
precise approximation of this projection. 

Observe that the spanning tree 
$SP(\overline{P_1},\dots,\overline{P_{n_d}})$ is 
precisely by construction of length $h_s L\le L$ 
which implies that constraints \eqref{wconstr} 
and $\theta_\weights\ge1$ are compatible. 
In our situation, an order of only $n_d$ iterations 
was required to reach a relative error of $10^{-6}$ 
on first order optimality conditions with respect 
to the infinity norm which reduces to a complexity 
of order $n_d^2$.

The second and third steps have been carried out using an 
hash structure representation of the mesh $\T$ 
combined with a Quad-tree associated to its vertices. 
Using those precomputed information, these operations 
required in practice an order of $(n_d+n_p)\log(n_d+n_p)$ 
operations.

Finally, assembling and solving the linear system has 
been performed by a standard Cholesky decomposition 
which concentrated the main part of the computational 
effort in our experiments where the number of parameters 
$3n_d$ was negligible with respect to $n_p$ which was 
of order $10^4$.

\begin{algorithm}[t]
\caption{Summary of one cost evaluation.}
\label{algo:cost}
\begin{description}
\item[Input] 
$m,\, l,\,$ $SP(P_1, \dots, P_{n_d}),\, \theta_\weights,\, h_s$.
\item[step 1] 
Project $(SP(P_1, \dots, P_{n_d}),\, \theta_\weights)$ 
with Algorithm~\ref{algo:proj} to obtain an admissible couple
$(SP(\overline{P_1}, \dots, \overline{P_{n_d}}),\, \overline{\theta_\weights})$.
\item[step 2] 
Locate points $\overline{P_1}, \dots, \overline{P_{n_d}}$ 
in the mesh $\T$.
\item[step 3]
Compute the intersection of every arc of 
$SP(\overline{P_1}, \dots,\overline{P_{n_d}})$ 
with every triangle of $\T$ to evaluate
$V_\lengths(\overline{P_1},\dots,\overline{P_{n_d}},\,\overline{\theta_\weights})$.
\item[step 4] 
Assemble matrix $K_x^T V_\lengths K_x + K_y^T V_\lengths K_y$
and solve linear system \eqref{pblinearsystem} to 
compute its solution $\overline{U}$.
\item[Return] 
$\frac{1}{2} \,\overline{U}^T \! K\overline{U} 
+ \frac{m}{2} \,\overline{U}^T \!\!
  \left( K_x^T V_\lengths K_x+K_y^T V_\lengths K_y \right)\overline{U}-M F$
\end{description}
\end{algorithm}

\subsection{Numerical experiments}
Based on previous discretization, we approximate optimal 
triplet solutions $(S,\theta,u)$ 
of problem~\eqref{pbm} using a stochastic 
algorithm. 
We focus our study on the homogeneous load case 
corresponding to $f$ constantly equal to $1$ and on the sum 
of two Dirac masses $f=\delta_{(-1/2,0)}-\delta_{(1/2,0)}$. 

In all our experiments, we used the NLopt library 
(see~\cite{nlopt}) and its implementation of ISRES algorithm 
with its default parameters which combine local and global 
stochastic optimization. 

We carried out optimization runs limited to five hours 
of computation leading to an order of $2\times10^6$ cost 
function evaluations based on algorithm~\ref{algo:cost} 
on a standard computer for a mesh made of $10^4$ triangles. 

In Figures~\ref{fig:cracks1} and \ref{fig:cracks2}
we describe the optimal configurations we obtained for 
$L=1$ to $L=6$ with $n_d=20$. 
Observe that the resulting number of parameters in the 
triplet is exactly $3n_d$. 
Moreover, in order to obtain a fine and stable description
of optimal structures, we performed a local optimization
step of the obtained structure increasing the number of
points to $n_d=50$.
We used the NLopt implementation of the BOBYQA algorithm
for this final step which does not require gradient base
information.

Finally, we give in Table~\ref{table:results} several
numerical estimates obtained on a fine mesh with $10^5$
elements of our computed sets and also of natural networks
which could be guess to be optimal.
As illustrated by these numerical values, neither the 
radius (for $L=1$), a diameter (for $L=2$), a triple 
junction (for $L=3$) or a cross for ($L=4$) seem to 
be optimal.

We recover the fact, described in Proposition~\ref{necess1},
that, for optimal structures, the tangential gradient of $u$ 
is almost constant where $\theta>1$ whereas we 
can observe drastic changes of magnitude where $\theta=1$ 
(see Figures~\ref{fig:cracks1}, \ref{fig:cracks2} 
and \ref{fig:cracks3}).

\begin{table}[h]
\centering
\begin{tabular}{c|l|c}
Length constraint      & Theoretical guesses                  &  Computed optimal networks \\
\hline
1                      & -0.179471  \text{(radius)}           &  {-0.178873}  \\
2                      & -0.165095  \text{(diameter)}         &  {-0.161944}  \\
3                      & -0.152676  \text{(star)}             &  {-0.149601}  \\
4                      & -0.141969  \text{(cross)}            &  {-0.138076}  \\
5                      &  \multicolumn{1}{c|}{-}              &  {-0.127661}  \\
6                      &  \multicolumn{1}{c|}{-}              &  {-0.117140}
\end{tabular}
\caption{Reinforcement values computed on a fine mesh 
of $10^6$ elements for classical and computed connected
sets for $m=0.5$}
\label{table:results}
\end{table}

\section{Remarks and open questions}
\label{sopen}
There are several remarks and open problems related to the optimization 
problem~\eqref{optpb} and the relaxed optimization problem~\eqref{maxbl}; 
we list below those we deem more interesting.

\begin{remark}
\label{efdisc}
In general the functional $\EE_f(\cdot)$ 
is not weakly* continuous on $\ML$.
We prove this claim by an explicitly example.
We let $S\subset\Omega$ be a closed segment with 
length $2\delta$, which we identify with the interval $[-\delta,\delta]$,
and let $f$ be a signed measure of the form 
$f:=\rho\,\HH^1\trace S$ where $\rho$ is a 
function on $S$ with integral $0$. 

We then consider the measures $\mu_n:=\theta_n\,\HH^1\trace S$ 
where $\theta_n(s):=g(ns/\delta)$ and $g$ is the $2$-periodic function 
on $\R$ defined by $g=1$ on $[-1,0)$ and $g=2$ on $[0,1)$. 
Thus $\mu_n$ converge to $\mu:=\frac{3}{2}\,\HH^1\trace S$. 
However, the functionals 
\[
F(\mu_n,u)
:=\frac{1}{2}\int_S |\nablatau u|^2 d\mu_n - \int_S u\, df
=\int_{-\delta}^\delta \frac{\theta_n}{2} |\dot u|^2-\rho u\,ds
\]
Gamma-converge (on $H^1(S)$ endowed with the weak topology) to 
\[
F(u):=\int_{-\delta}^\delta \frac{2}{3} |\dot u|^2 - \rho u \, ds
\, , 
\]
and $F(u)< F(\mu,u)$ for every non constant $u$
(because $2/3$ is strictly less than $3/4$, which is the 
density of $\mu$ divided by $2$).
In particular if $f$ is not a.e.~equal to $0$ then 
\[
\lim_{n\to\infty} \min_u F(\mu_n,u)
= \min_u F(u)
< \min_u F(\mu,u)
\]
(all minima are taken over $u\in H^1(S)$).
Using the strict inequality we can prove that if the constant 
$m$ that appears in \eqref{emuu} is sufficiently large, then
\[
\limsup_{n\to\infty} \EE_{mf}(\mu_n) < \EE_{mf}(\mu)
\, .
\]
\end{remark}

\begin{problem}
\label{relax}
We do not know if problem~\eqref{maxbl} is 
the relaxation of problem~\eqref{optpb}. 
In other words, we do not know if the following 
approximation property holds:
\emph{for every $\mu\in\ML$ there exists a sequence of sets 
$S_n\in\AL$ such that
\begin{equation}
\label{enapprox}
\HH^1\trace S_n \to \mu
\quad\text{and}\quad
\EE_f(S_n) \to \EE_f(\mu)
\, .
\end{equation}
}%
Indeed, by the definition of $\ML$ every $\mu$ 
in this class is the limit of $\HH^1\trace S_n$
for some sequence of sets $S_n\in\AL$, 
but since $\EE_f$ is not continuous (Remark~\ref{efdisc}), 
the second limit in \eqref{enapprox} does not necessarily hold.
\end{problem}

\begin{remark}
\label{lavrentiev}
If the approximation in energy \eqref{enapprox} does not hold, 
then some kind of Lavrentiev phenomenon may occur. 
This means that 
\begin{itemize}
\item
the value of the maximum/supremum
in the original optimization problem~\eqref{optpb} could be strictly 
smaller than the value of the maximum in the relaxed optimization 
problem~\eqref{maxbl};
\item
given a maximizing sequence $(S_n)$ for problem~\eqref{optpb}, the 
associated measures $\HH^1\trace S_n$ may not converge to a 
solution of the relaxed problem~\eqref{maxbl}.
\end{itemize}
\end{remark}

\begin{remark}
\label{bobuserel}
Assume that $f$ belongs to $L^p(\Omega)$ for some $p>1$ and 
that $\mu$ is a measure in $\ML$ with support $S$, and let 
$\mu^a$ be the absolutely continuous part of $\mu$
with respect to~$\HH^1\trace S$.
Using Lemma~\ref{killsinglemma2}, Lemma~\ref{killnorlemma} and
Proposition~\ref{emumin} we easily obtain the following: 
\emph{the relaxation of $E_f(\mu,u)$ 
with $u\in C^\infty_c(\Omega)$ is
the functional $\smash{ E_f^*(\mu^a,u) }$ with
$\smash{ u\in H^1_0(\Omega)\cap H^1(S) }$.}

Notice that for $f=0$ we can rewrite $E_0(\mu,u)$ as 
\[
F(u):=\frac{1}{2} \int |\nabla u|^2 d\lambda
\] 
where $\lambda:=dx+ m\mu$, $dx$ is the Lebesgue measure on $\Omega$, 
and $m$ is the number that appears in \eqref{emuu}. 
Functionals of this type has been studied in detail
in \cite{bobuse}, where it is proved that the relaxation of 
$F(u)$ with $u\in C^\infty_c(\Omega)$ is 
\[
F^*(u):=\frac{1}{2} \int |\nabla\!_\lambda u|^2 d\lambda
\, , \quad 
u\in H^1(\lambda)
\, ,
\]
where the space $H^1_\lambda$ 
and the operator $\nabla\!_\lambda$ are defined in a suitable
abstract sense.

Thus the relaxation result stated above can be rephrased as follows:
the space $H^1_\lambda$ agrees with $H^1_0(\Omega)\cap H^1(S)$
and the operator $\nabla\!_\lambda$ agrees with
the full gradient $\nabla$ for Lebesgue-a.e.~$x$, 
with the tangential gradient $\nablatau$ for 
$\HH^1$-a.e.~$x\in S$, and with the null-operator
for $\mu^s$-a.e.~$x$, where $\mu^s$ is
the singular part of $\mu$ w.r.t.~$\HH^1\trace S$.
\end{remark}

\begin{problem}
We denote by $\mu=\theta\HH^1\trace S$ a solution 
of problem~\eqref{maxbl} given in Theorem~\ref{exthsl}, 
and by $u$ the unique minimizer of $E_f^*(\mu,\cdot)$.
Here are some open questions concerning $\mu$ and $u$.

\smallskip
(a)~Intuition tells that it is never convenient 
to use part of $S$ to reinforce 
the boundary the membrane, because it 
is already reinforced by the Dirichlet boundary
condition inscribed in the problem. 
On the other hand, the requirement that $S$ be connected 
might force part of it to lie on the boundary of $\Omega$,
even if this part does not contribute to reinforcing
the membrane.
Here are two plausible statements that would be
interesting to investigate: 
\begin{itemize}
\item
for some non-convex domain $\Omega$ the set $S\cap\bd\Omega$ may have 
positive length, but $S$ cannot be entirely contained 
in $\bd\Omega$;
\item
if $\Omega$ is strictly convex then the set $S\cap\bd\Omega$ 
has zero-length, and perhaps it is even finite. 
\end{itemize}
Note that using the second part of Theorem~\ref{exthsl} 
(and in particular assuming that the support of $f$ is 
$\Omega$)
we can prove the following: if $\Omega$ is strictly convex then 
$S\cap\bd\Omega$ does not contain any arc.

\smallskip
(b)~In principle the density $\theta$ belongs to $L^1(S)$. 
It would be interesting to investigate if 
$\theta$ is bounded and, possibly refining the 
assumptions on the data, prove further regularity properties.

\smallskip
(c)~According to the numerical simulations we made, the set 
$S$ never contains closed curves; it would be interesting 
to show this fact under general assumptions.

\smallskip
(d)~Numerical simulations also show that $S$ may present 
branching points at least for values of $L$ large enough. 
However, the regularity of the set $S$ seems a difficult issue: 
is it true that, under suitable assumptions on the data, 
the set $S$ is smooth except a finite number of branching 
points?
And if a branching occurs, what are the necessary condition 
of optimality for the related angles?

\smallskip
(e)~When the support of $f$ is $\Omega$ and the total length 
$L$ tends to $+\infty$, then the optimal set $S$ tends to fill 
the entire $\Omega$. 
Can we say more on the asymptotic behavior of $S$
in this regime? 
This question is reminiscent of a $\Gamma$-convergence result 
for the irrigation problem proved in~\cite{mostil}.
\end{problem}

%
%

%
%
% affiliations
%
%
\vskip .5 cm

{\parindent = 0 pt\footnotesize
G.A.
\\
Dipartimento di Matematica, 
Universit\`a di Pisa,
largo Pontecorvo~5, 
56127 Pisa, 
Italy 
\\
e-mail: \texttt{giovanni.alberti@unipi.it}

\bigskip
G.B.
\\
Dipartimento di Matematica, 
Universit\`a di Pisa,
largo Pontecorvo~5, 
56127 Pisa, 
Italy 
\\
e-mail: \texttt{giuseppe.buttazzo@unipi.it}

\bigskip
S.G.L.B.
\\
Dipartimento di Matematica e Applicazioni, 
Universit\`a di Napoli ``Federico II'', 
via Cintia, Monte S.~Angelo, 
80126 Napoli, 
Italy
\\
e-mail: \texttt{serena.guarinolobianco@unina.it}

\bigskip
\'E.O.
\\
Laboratoire Jean Kuntzmann, 
Universit\'e Grenoble Alpes, 
38041 Grenoble, 
France
\\
e-mail: \texttt{edouard.oudet@imag.fr}

}

%
%
% FIGURES
%
%
\newpage
\begin{figure}[ht]
\centering
\begin{tabular}{c}
      \includegraphics[height=6cm]{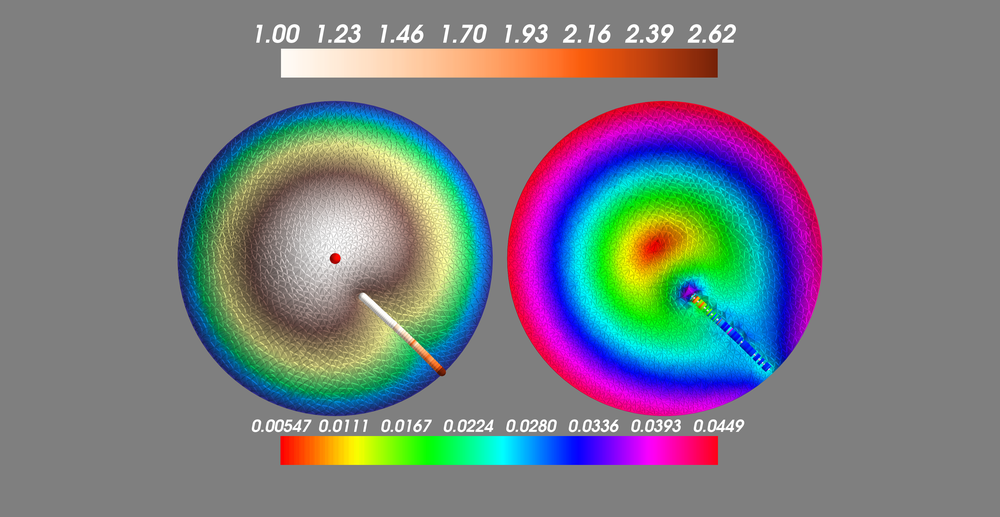}\\ % 228.4
      \includegraphics[height=6cm]{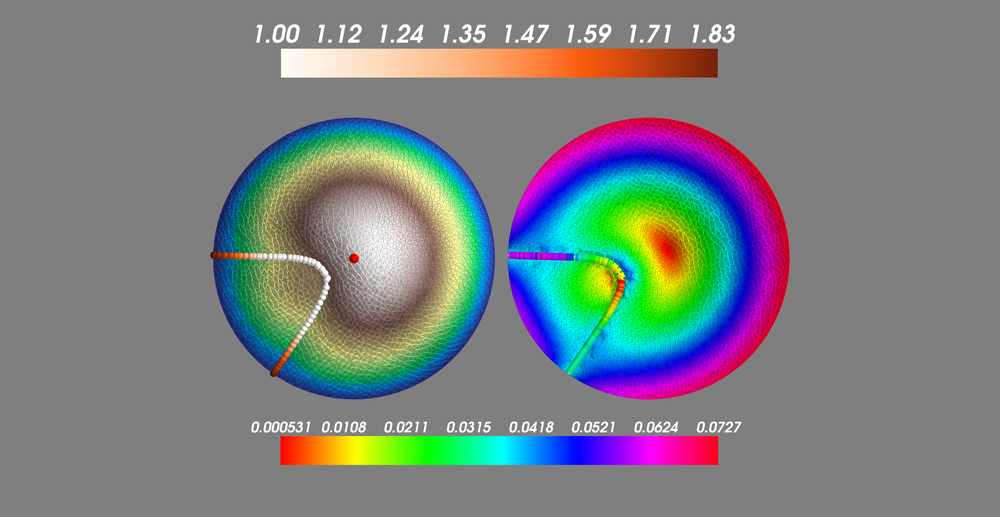}\\ % 232.4
      \includegraphics[height=6cm]{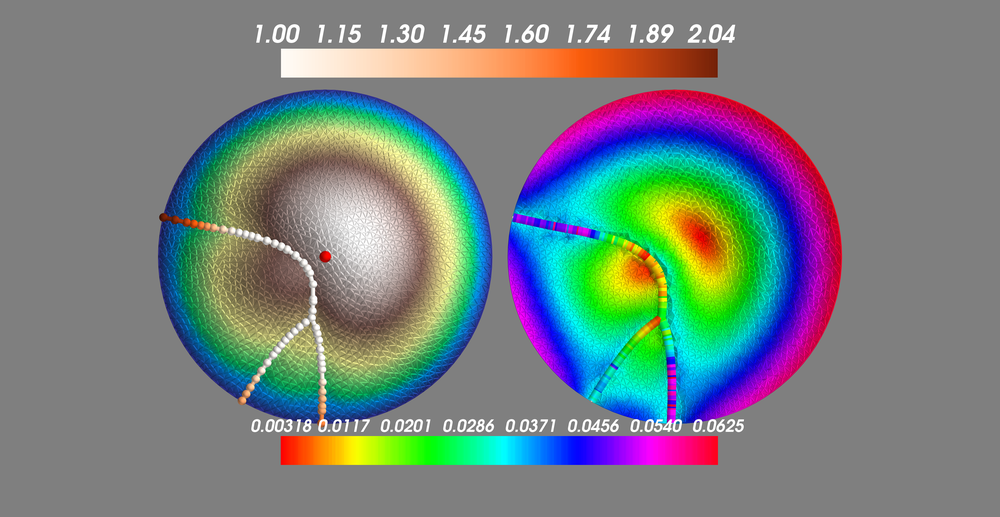}   % 233.4
  \end{tabular}
  \caption{Approximation of globally optimal reinforcement
           structures for $m = 0.5$, $L = 1,\,2 $ and $3$. 
           The upper colorbar is related to the weights $\theta$ 
           which colors the optimal reinforcement set on the left, 
           whereas the lower colorbar stands for the tangential 
           gradient plotted on the connected set on the right picture} 
  \label{fig:cracks1}
\end{figure}

\newpage
\begin{figure}[ht]
  \centering
  \begin{tabular}{c}
     \includegraphics[height=6cm]{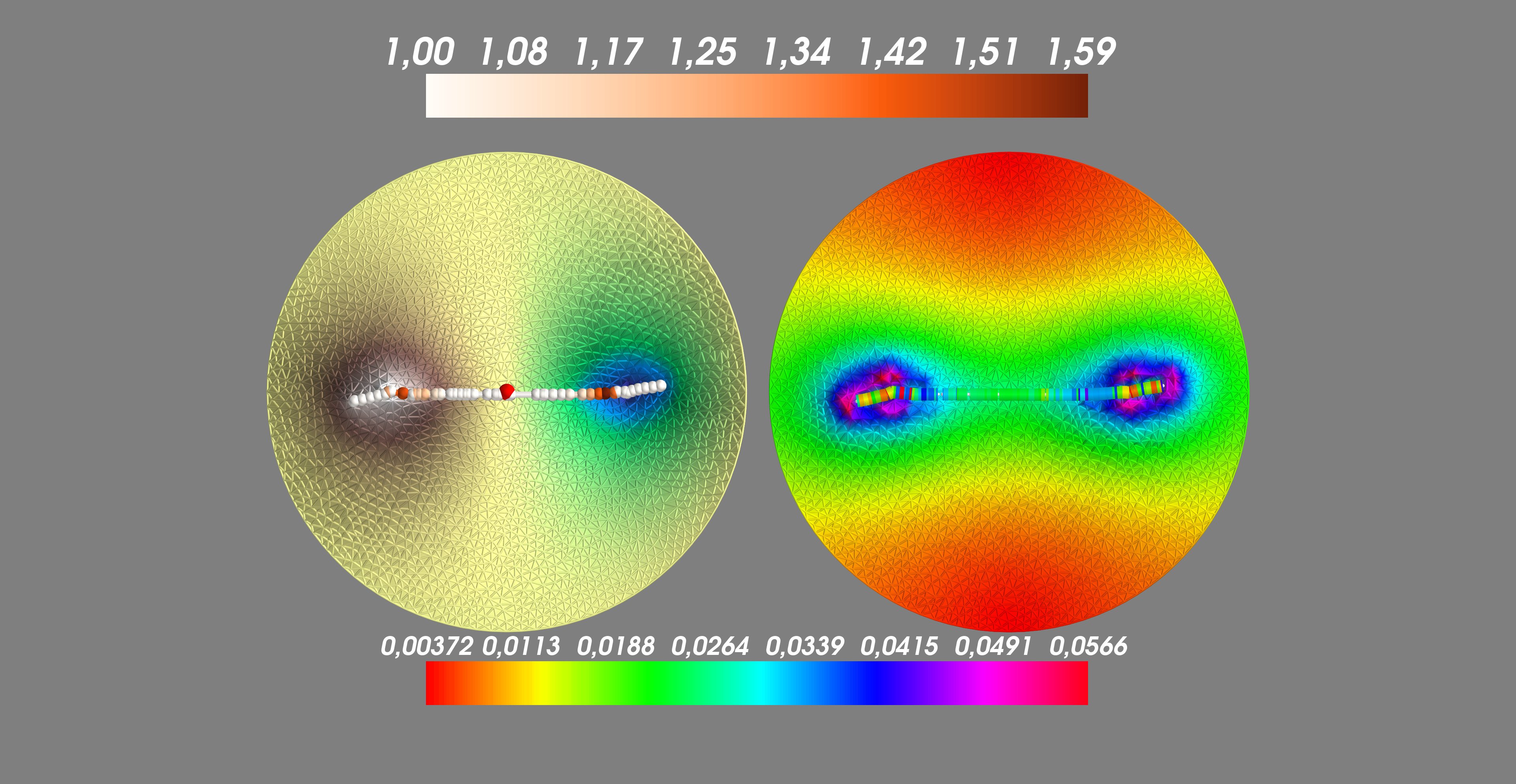} \\ % 245.4
     \includegraphics[height=6cm]{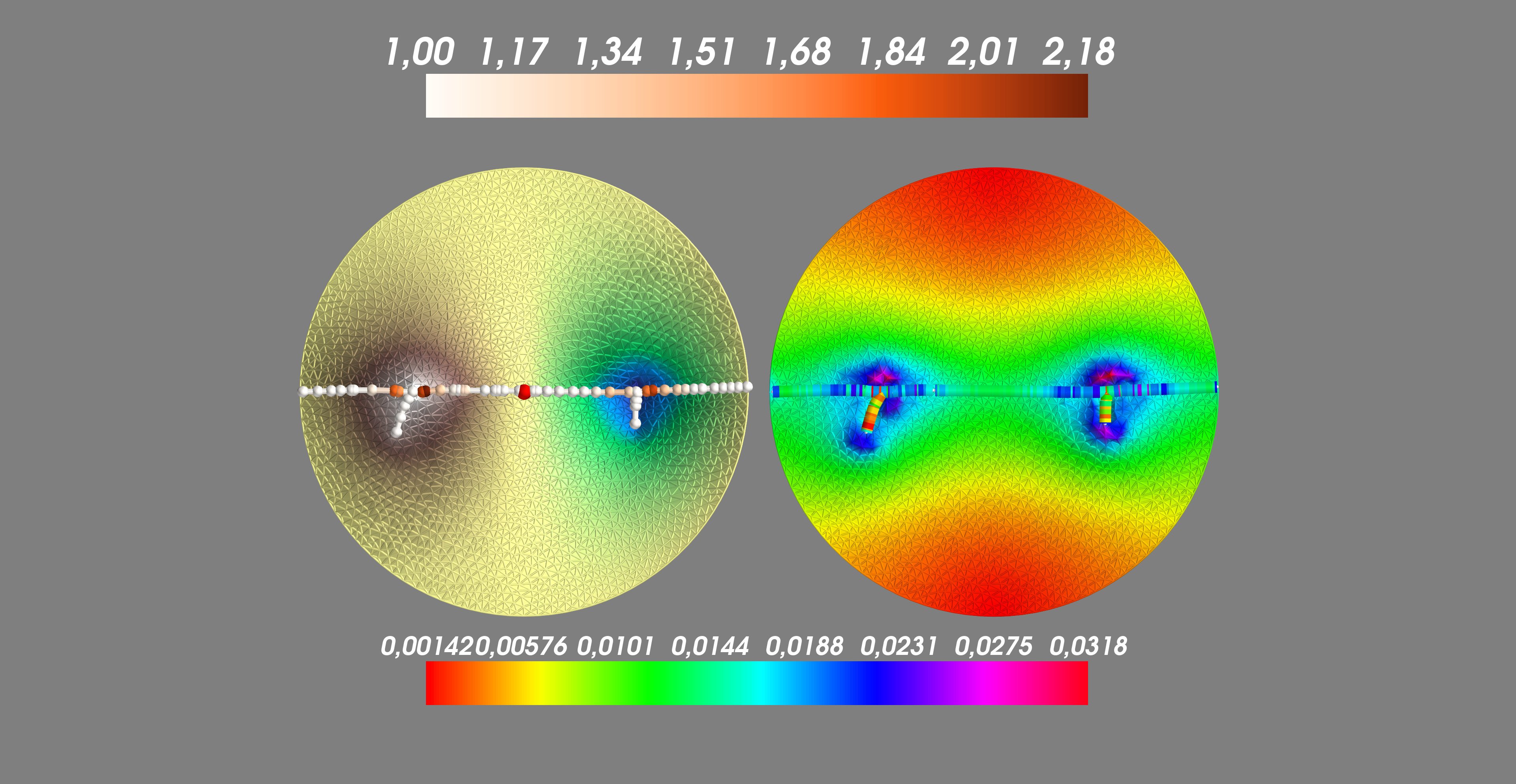} \\ % 238.4
     \includegraphics[height=6cm]{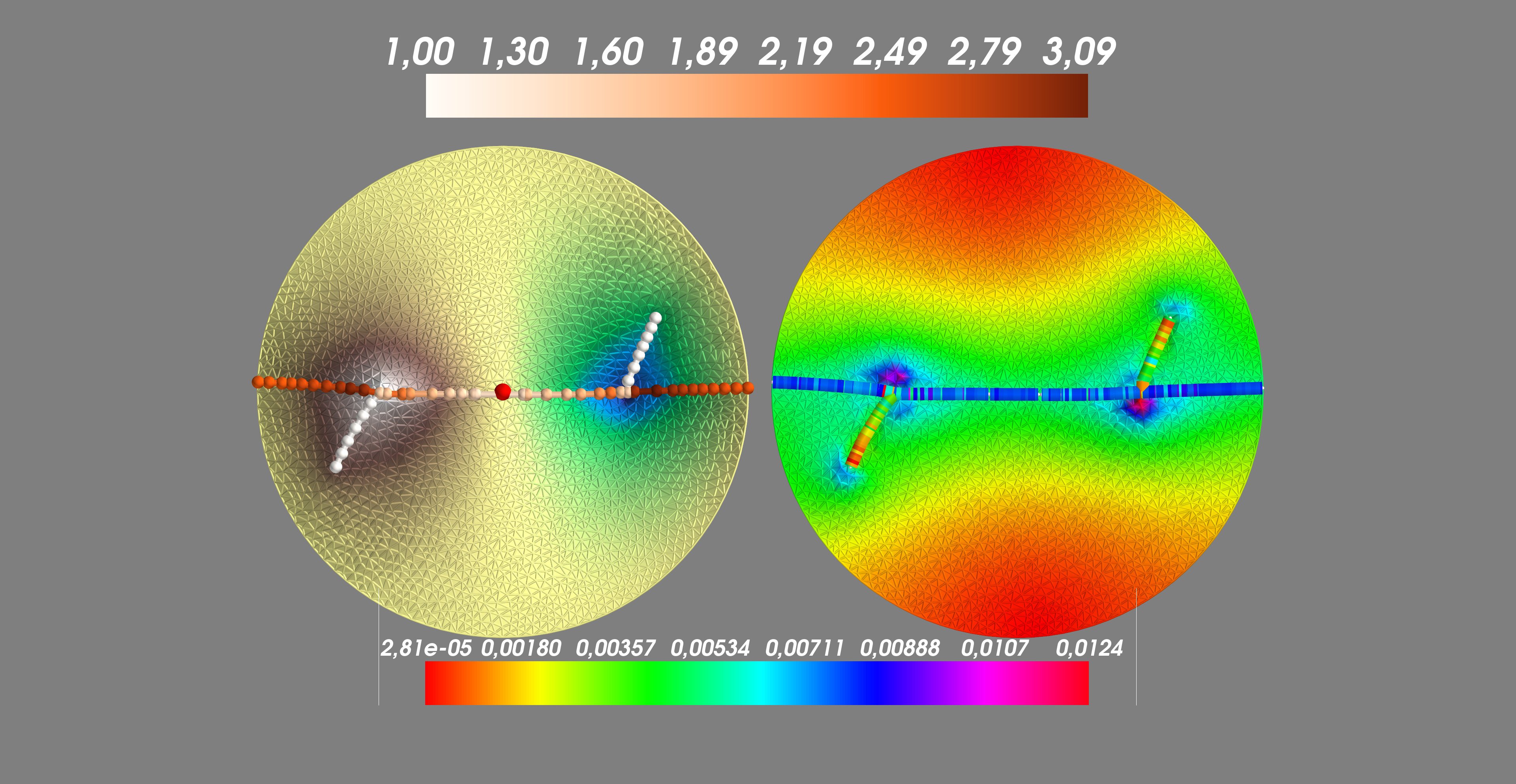}        % 243.4
  \end{tabular}
  \caption{Approximation of globally optimal reinforcement
           structures for $m=0.5$, $L=1.5,\,2.5$ and $5$ for a source 
           consisting of two dirac masses. 
           The upper colorbar is related to the weights $\theta$ which 
           colors the optimal reinforcement set on the left, whereas the 
           lower colorbar stands for the tangential gradient plotted on 
           the connected set on the right picture}
  \label{fig:cracks3}
\end{figure}

\newpage
\begin{figure}[ht]
  \centering
  \begin{tabular}{c}
     \includegraphics[height=6cm]{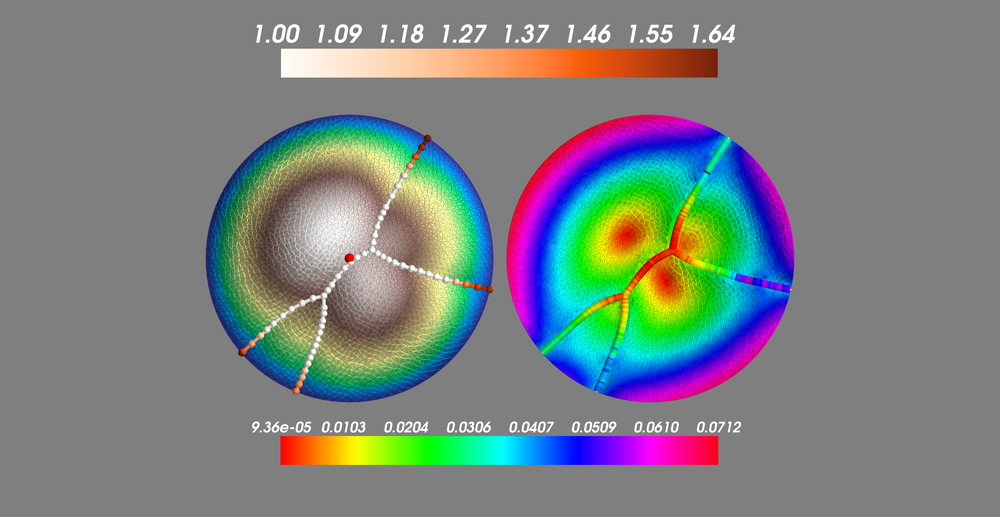}\\ % 245.4
     \includegraphics[height=6cm]{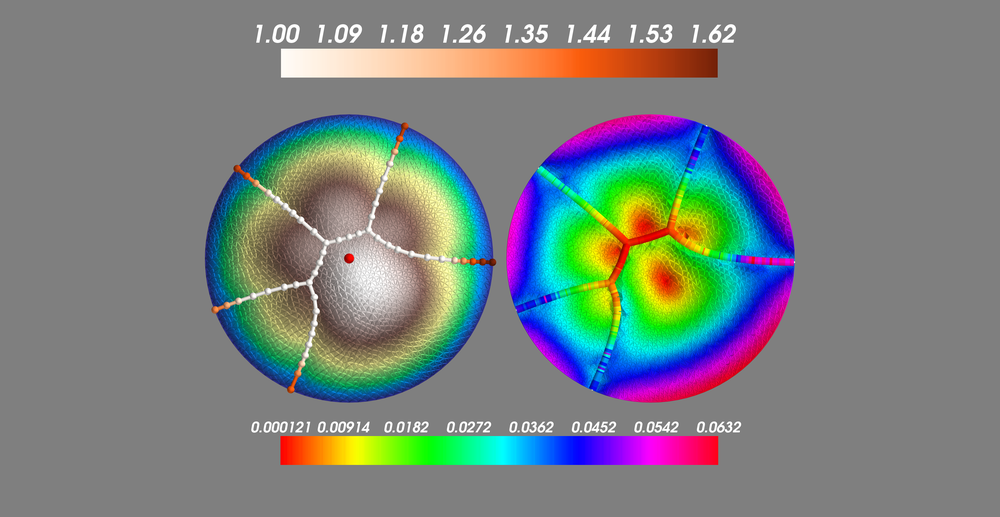}\\ % 238.4
     \includegraphics[height=6cm]{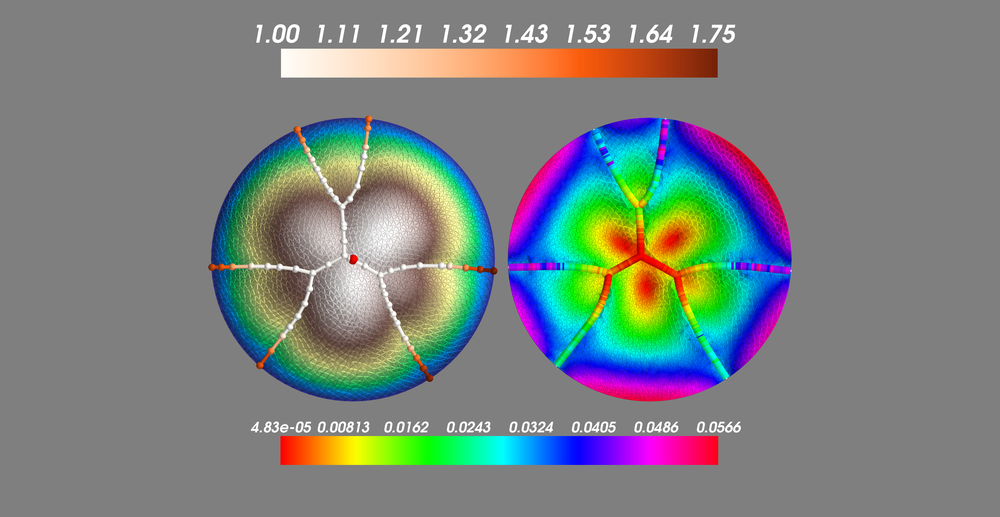}   % 243.4
  \end{tabular}
  \caption{Approximation of globally optimal reinforcement
           structures for $m=0.5$, $L=4,\,5$ and $6$. 
           The upper colorbar is related to the weights $\theta$ 
           which colors the optimal reinforcement set on the left, 
           whereas the lower colorbar stands for the tangential gradient 
           plotted on the connected set on the right picture}
  \label{fig:cracks2}
\end{figure}

\end{document}